\numberwithin{equation}{section}
\theoremstyle{plain}
\newtheorem{thm}{Theorem}[section]
\newtheorem{lem}[thm]{Lemma}
\newtheorem{cor}[thm]{Corollary}
\newtheorem{prop}[thm]{Proposition}
\theoremstyle{definition}
\newtheorem{rem}[thm]{Remark}
\begin{document}
\title{ON STRATEGIC DEFENSE IN STOCHASTIC NETWORKS}
\author{Jewgeni H. Dshalalow\\
{\color{blue}eugene@fit.edu}
\and
Ryan White\\
{\color{blue}rwhite2009@fit.edu}}
\date{}
\maketitle

\vspace{-1cm}
\begin{center}
Department of Mathematical Sciences\\
College of Science\\
Florida Institute of Technology\\
Melbourne, Florida 32901, USA
\end{center}

\begin{abstract}
This paper deals with the detection and prediction of losses due to cyber attacks waged on vital networks. The accumulation of losses to a network during a series of attacks is modeled by a 2-dimensional monotone random walk process as observed by an independent delayed renewal process. The first component of the process is associated with the number of nodes (such as routers or operational sites) incapacitated by successive attacks. Each node has a weight associated with its incapacitation (such as loss of operational capacity or financial cost associated with repair), and the second component models the cumulative weight associated with the nodes lost. Each component has a fixed threshold, and crossing of a threshold by either component represents the network entering a critical condition. Results are given as joint functionals of the predicted time of the first observed threshold crossing along with the values of each component upon this time.

\textbf{Keywords}: Stochastic networks, fluctuation theory, marked point processes, Poisson process, ruin time, exit time, first passage time, prediction, sums of independent random variables, random walks, renewal theory, communication networks, network models.

\textbf{AMS Subject Classification}: 60G50, 60G51, 60G52, 60G55, 60G57, 60K05, 60K35, 60K40, 60G25, 90B18, 90B10, 90B15, 90B25.
\end{abstract}

\section{\protect\centering Introduction}
\textbf{Background Information}. Complex large-scale networks form an integral part of our defense system and infrastructure: e.g. wireless communication networks, the Internet, multifunctional sensor networks, large-scale computer networks, and electrical grids. They play crucial roles in military and civilian installations and in biological research. A network can be regarded as a graph with nodes representing individual sensors, computers or servers, and so on, and edges connecting individual nodes. Such nodes are subject to failures of various origins, such as natural disasters, hostile attacks (e.g. cyber attacks), and benign hardware failures. Hostile attacks are prevalent instances of network disruptions and they are a growing threat to national security.

	In the past decade, we have observed intense cyber attacks, some of which have hindered proper operation of the computer networks of banks, private corporations, government facilities, and even vital infrastructure. In each situation, conventional security measures were unable to prevent such significant failures of security and the associated financial costs. Thus, it stands to reason to offer unconventional tools that may minimize the damage to networks. One of the common measures is to disconnect the intact portion of a network from the rest of it before incurring critical damage, but how does one recognize the threat and manage the network once an attack begins? In this article we lay out a foundation for predicting the time and caliber of potentially destructive damage to a network by means of statistics and operational calculus.
	
	Once an attack is launched, we seek to determine how much damage it will potentially cause and the time until a specified critical amount of damage is incurred. We model processes of attacks and cumulative losses of nodes and weights to predict both the time and size of critical damage. Further, we embellish the basic model by adding an auxiliary robustness threshold and analyze the fluctuation of the process about this intermediate threshold prior to entering a critical stage, which yields a more refined analysis of the nature of the accumulation of damage. This allows one to rank various risks to the network by considering the time between threshold crossings, which may, for example, be applied to early detection of hostile attacks and differentiation between attacks and benign hardware failures. Values of the thresholds can be determined heuristically, through optimization, or monitored upon the observation of incoming damage.
	
	\textbf{Modeling Networks.} As mentioned, networks can be modeled by graphs, and particularly weighted graphs, in which each edge or node is associated with a weight representing, for example, cost, bandwidth, distance, strength of social ties, energy, gravitational force, liquidity in financial networks, or probabilities. Random graphs are also commonly used, e.g. classical Erd\"os-R\a`enyi random graphs consist of $n$ vertices with $M$ edges chosen uniformly at random from all possible adjacencies, but we instead consider graphs with random node weights whose original structure can further be randomly altered.
	
	It is often thought that cyber attacks spread from one node to another along the graph as a branching process, but this is an incomplete picture for several reasons. Firstly, attacks often have multiple simultaneous targets (e.g. machines linked to a targeted router or servers housing virtualized machines) or victims quarantine groups of machines in response to attack detection, both of which result in batch losses. Further, the viral aspect of cyber attacks is significantly mitigated by firewalls, and viral attacks that do elude firewalls tend to practically immediately infect subnetworks (due to the structure of highly interconnected clusters that characterizes large-scale networks). Lastly, some of the most prevalent attacks are not necessarily viral at all (e.g. distributed denial-of-service attacks).
	
	Incoming damage to a network is subject to a complex process. An attack disables successive nodes and edges, effectively removing a subgraph with a random number of nodes with random associated weights (due to both the value of incident edges and intrinsic node value). Furthermore, such a network under attack is observed at potentially random epochs of time, as the losses become apparent. The spread of the damage is one of the most distinctive elements of a hostile action, so we predict its caliber using methods of stochastic analysis.
	
	\textbf{Related Literature.} Combating cyber crime involves various mathematical modeling and methods, as well as those of non-mathematical nature (such as prevention, physical and electronic defense mechanisms, and intelligence, which do not pertain to our article). Thus, we mention a few areas of current research on this topic and some literature known to the authors. Random and deterministic graph theory is probably the most common area for studying networking \lbrack 2, 3, 5, 10-15, 19, 23-27, 30-32\rbrack\ although just a few \lbrack 5, 23, 24\rbrack\ focused on potential cyber crime. Random walk and fluctuation analysis is an important hybrid in tools and modeling of interest, although the authors \lbrack 1, 6-9, 17-18, 28\rbrack\ did not have cyber crime in mind. There are also theoretical aspects of fluctuation analysis and applications to finance, physics, and other sciences that relate to the topic \lbrack 2, 6, 16, 20, 28, 29\rbrack.
	
	\textbf{The Layout of the Paper.} In Section 2, we begin with a model of a process where attacks occur according to a marked point process in time, with two-dimensional marks representing the cumulative nodes and weight lost up to time $t$ under delayed observation upon random time epochs. We derive a joint functional of each component as the process (node loss, weight loss, and time) at the first observed exit from $[0,M]\times[0,V]$ and at one observation prior.
	
	In Section 3, we extend the model to include an auxiliary robustness threshold for the node loss component, $M_1<M$. We derive a joint functional of the components of the process at both the first observed exit of the node component from $[0,M_1]$, the observed exit of the process from $[0,M]\times[0,V]$, and one observation prior to each for the ``confined'' process, under the restriction that the first observed $M_1$ crossing precedes the major crossing.
	
	The key technique in preserving the wide generality of our results in Sections 2 and 3 is to derive them under a composition of a number of operators of two types: the Laplace-Carson transform and so-called D-operator, the latter of which is introduced in some past work of one of the co-authors. Yet the flexibility and tractability of the inverses of these operators allows the reduction of the general formulas to fully explicit formulas for practical cases. We demonstrate this in Sections 4 and 5, where we will derive closed-form, explicit formulas for two sets of realistic probabilistic assumptions. Among other things, we derive explicit probabilistic results, including joint transforms of the process upon the $M_1$ and major crossings, which yield marginal transforms of each component at each observed crossing time and the time between the two observed crossings.

\section{\protect\centering A Basic Model}

Consider an infinite weighted graph in which weights are associated with nodes rather than edges. In reality there are not infinite graphs, but this assumption appropriately models large-scale networks. During a series of attacks, successive batches of nodes are incapacitated upon random time increments. Associated with each node is a random weight representing its value to the health of the network. We suppose the network enters a critical state wherein it may become dysfunctional if the number of nodes incapacitated by hostile attacks exceeds a fixed integer threshold $M$ or the magnitude of weights associated with the compromised nodes exceeds a fixed real threshold $W$. We proceed with more formalism of the model.

Let $\left(\Omega,\mathcal{F}\left(\Omega\right),P\right)$ be a probability space and let
\begin{equation}\label{a1}
\eta=\mathcal{N}\otimes\mathcal{W}=\sum\limits_{k\geq 1}\left(n_k,w_k\right)\varepsilon_{t_k}
\end{equation}
where $\varepsilon_a$ is a Dirac point measure, be a marked Poisson random measure on this probability space describing the evolution of damage taken to a network, where
\begin{align*}
&n_k\text{ nodes are destroyed at time }t_k,\,k=1,2,\ldots,
\\&w_k=\sum\limits_{j=1}^{n_k}w_{jk}\text{ is the nonnegative real weight associated with the $n_k$ nodes}
\end{align*}
and the underlying support counting measure $\sum^\infty_{k=1}\varepsilon_{t_k}$ is Poisson of rate $\lambda$ directed by $\lambda\left|\boldsymbol{\cdot}\right|$, where $\left|\boldsymbol{\cdot}\right|$ is the Borel-Lebesgue measure on $\mathcal{B}\left(\mathbb{R}_{+}\right)$.

We assume that $n_k$'s are iid$\,$(and independent of $w_{jk}$'s) with common marginal PGF (probability-generating function) $g\left(z\right)$, and $w_{jk}$'s are iid with common LST (Laplace-Stieltjes transform) $l\left(u\right)$ for $j,k\in\mathbb{N}$.

By straightforward probability arguments we obtain the following representation for $\eta$ in terms of its dependent components $\mathcal{N}$ and $\mathcal{W}$:
\begin{equation}\label{markedpoissontransform}
E\left[z^{\mathcal{N}\left(T\right)}e^{-v\mathcal{W}\left(T\right)}\right]=e^{\lambda\left|T\right|\left[g\left(zl\left(v\right)\right)-1\right]}\text{, Re}\left(v\right)\geq 0\text{, }|z|\leq 1
\end{equation}
where $T$ is a Borel subset of $\mathbb{R}_{+}$ and $\left|T\right|$ denotes the Borel-Lebesgue measure of set $T$.

Now, suppose $\eta$ is observed by a delayed renewal process
\begin{equation}
\mathcal{T}=\sum\limits_{n=0}^\infty\varepsilon_{\tau_n}
\end{equation}
and let
\begin{equation}
\Delta_n=\tau_n-\tau_{n-1},n\in\mathbb{N}\text{, be iid and independent of }\Delta_0=\tau_0
\end{equation}
such that
\begin{equation}
L_0\left(\theta\right)=E\left[e^{-\theta\Delta_0}\right]\text{, Re}\left(\theta\right)\geq 0\text{ (the LST of }\Delta_0=\tau_0)
\end{equation}
\begin{equation}
L\left(\theta\right)=E\left[e^{-\theta\Delta_1}\right]\text{, Re}\left(\theta\right)\geq 0\text{ (the common LST of }\Delta_n=\tau_n-\tau_{n-1}, n\in\mathbb{Z}_{> 0})
\end{equation}
Then, by Lemma A.1 (Appendix) and
\eqref{markedpoissontransform},
\begin{equation}\label{initialincrementbasic}
E\left[z^{\mathcal{N}\left([0,\tau_0]\right)}e^{-v\mathcal{W}\left([0,\tau_0]%
\right)-\theta\tau_0}\right]=L_0\left[\theta+\lambda-\lambda
g\left[zl\left(v\right)\right]\right]=\gamma_0\left(z,v,\theta\right)
\end{equation}
\begin{equation}\label{incrementbasic}
E\left[z^{\mathcal{N}\left((\tau_0,\tau_1]\right)}e^{-v\mathcal{W}\left((\tau_0,\tau_1]\right)-\theta\Delta_1}\right]=L\left[\theta+\lambda-\lambda g\left[zl\left(v\right)\right]\right]=\gamma\left(z,v,\theta\right)
\end{equation}
are the functionals describing the total number of lost nodes and their associated weights observed within time intervals $\left[0,\tau_0\right]$ and $(\tau_0,\tau_1]$, respectively. Since the increments  are \textit{iid}, the second corresponds to any $(\tau_{n-1},\tau_n]$.

Now, introduce a generic marked delayed renewal process
\begin{equation}\label{delayedrenewalprocess}
\mathcal{X}\otimes\mathcal{Y}\otimes\mathcal{T}=\sum\limits_{n=0}^\infty\left(X_n,Y_n\right)\varepsilon_{\tau_n}
\end{equation}
with mutually dependent components 

\begin{tabbing}
\hspace{1cm}\=\kill
\LTab{$\left(X_n,Y_n\right):\Omega\rightarrow\mathbb{N}\times\mathbb{R}_{+}$}
\end{tabbing}
whose relationship to the network will be explained later. Denote
\begin{equation}
N_n=\sum\limits_{i=0}^nX_i\text{ and }W_n=\sum\limits_{i=0}^nY_i.
\end{equation}
Introduce the random indices
\begin{equation}\label{discreteindexbasic}
\mu:=\inf\left\{n\geq 0:N_n>M\right\}\text{ for a fixed positive integer }M
\end{equation}
\begin{equation}\label{continuousindexbasic}
\nu:=\inf\left\{n\geq 0:W_n>V\right\}\text{ for a fixed positive real number }V
\end{equation}
called the exit indices.

We would say that the component $\mathcal{X}$ is terminated at time $\tau_\mu,$and component $\mathcal{Y}$ is terminated at time $\tau_\nu$ if $\mathcal{X}$ and $\mathcal{Y}$ acted alone, but we seek the first time either component terminates. If the original marked Poisson process $\eta$ is observed by a delayed renewal process $\mathcal{T}$, then the embedded process will exhibit (mutually dependent) increments $X_n$ and $Y_n$ as the marks in the process $\mathcal{X}\otimes\mathcal{Y}\otimes\mathcal{T}$. Such an observed process is regarded as ``terminated'' at time min$\left\{\tau_\mu,\tau_\nu\right\}$, the first observed passage time, which represents delayed information regarding the actual real-time crossing which occurred earlier.

First, we consider the confined process on trace $\sigma$-algebra $\mathcal{F}\left(\Omega\right)\cap\left\{\mu<\nu\right\}$, i.e. the process with component $\mathcal{X}$ being terminated first, and thus the first observed passage time $\tau_\mu$ will be the exit time by the confined process. Equation (2.\ref{delayedrenewalprocess}) will be modified as
\begin{equation}\left(\mathcal{X}\otimes\mathcal{Y}\otimes\mathcal{T}\right)_\mu=\sum\limits_{n=0}^\mu\left(X_n,Y_n\right)\varepsilon_{\tau_n}
\end{equation}
which gives a more precise definition of the process observed until $\tau_\mu$. We do the same for the confined processes on $\mathcal{F}(\Omega)\cap \{\mu=\nu\}$ and $\mathcal{F}(\Omega)\cap \{\mu>\nu\}$.

Then, we define the first observed passage index,
\begin{equation}\rho=\min\left\{\mu,\nu\right\}=\inf\left\{n:\left(N_n,W_n\right)\notin[0,M]\times[0,V]\right\}.
\end{equation}

Throughout the rest of this section, we consider various marginal and semi-marginal variants of the joint functional
\begin{align}\label{functionalbasic}
\varPhi&=\varPhi\left(\alpha_0,\alpha,\beta_0,\beta,h_0,h\right)=E\left[\alpha_0^{N_{\rho-1}}\alpha^{N_\rho}e^{-\beta_0W_{\rho-1}-\beta W_\rho}e^{-h_0\tau_{\rho-1}-h\tau_\rho}\right]\notag
\\&=
\begin{aligned}[t]
&E\left[\alpha_0^{N_{\mu-1}}\alpha^{N_\mu}e^{-\beta_0W_{\mu-1}-\beta W_\mu-h_0\tau_{\mu-1}-h\tau_\mu}\boldsymbol{1}_{\left\{\mu<\nu\right\}}\right]
\\&+E\left[\alpha_0^{N_{\mu-1}}\alpha^{N_\mu}e^{-\beta_0W_{\mu-1}-\beta W_\mu-h_0\tau_{\mu-1}-h\tau_\mu}\boldsymbol{1}_{\left\{\mu=\nu\right\}}\right]
\\&+E\left[\alpha_0^{N_{\nu-1}}\alpha^{N_\nu}e^{-\beta_0W_{\nu-1}-\beta W_\nu-h_0\tau_{\nu-1}-h\tau_\nu}\boldsymbol{1}_{\left\{\mu>\nu\right\}}
\right]
\end{aligned}\notag
\\&=\varPhi_{\mu<\nu}+\varPhi_{\mu=\nu}+\varPhi_{\mu>\nu}
\end{align}
of the observed process upon the first observed passage time $\tau_{\min\left\{\mu,\nu\right\}}$ and pre-observed passage time $\tau_{\min\left\{\mu,\nu\right\}-1}.\,$The latter is of particular interest due to the crudeness of the observed process.

The following transforms will be vital in the derivation of explicit results in the upcoming sections. Denote
\begin{equation}\label{doperatorbasic}
\text{D}_{pq}=\mathcal{L}\mathcal{C}_q\circ\mathcal{D}_p
\end{equation}
Here $\mathcal{L}\mathcal{C}_q$ is the Laplace-Carson transform:
\begin{equation}
\mathcal{L}\mathcal{C}_q\left(\boldsymbol{\cdot}\right)\left(y\right)=y\int_{q=0}^\infty e^{-yq}\left(\boldsymbol{\cdot}\right)dq\text{, Re}\left(y\right)>0
\end{equation}
with the inverse
\begin{equation}
\mathcal{L}\mathcal{C}_y^{-1}(\boldsymbol{\cdot})(q)=\mathcal{L}_y^{-1}\left(\boldsymbol{\cdot}\frac{1}{y}\right)\left(q\right)
\end{equation}
where $\mathcal{L}_y^{-1}$ is the inverse of the Laplace transform.

The operator $\mathcal{D}_p$ is defined as
\begin{equation}
\mathcal{D}_p\left(f\right)\left(x\right)=\left(1-x\right)\sum\limits_{p=0}^\infty x^pf\left(p\right),\left\|x\right\|<1
\end{equation}
where $\left\{f\left(p\right)\right\}$ is a sequence, with the inverse 
\begin{equation}
\mathcal{D}_x^k\left(\boldsymbol{\cdot}\right)=\lim_{x\rightarrow 0}\frac{1}{k!}\frac{\partial^r}{\partial x^r}\left[\frac{1}{1-x}\boldsymbol{\cdot}\right],\,k\geq 0\text{, and }\mathcal{D}_x^k=0\text{, for }k<0.
\end{equation}
such that $\mathcal{D}_x^k(\mathcal{D}_p\{f(p)\}) = f(k)$. Then, the full inverse operator is
\begin{equation}\label{doperatorinversebasic}
\text{D}^{-1}_{xy}\left(\boldsymbol{\cdot}\right)\left(p,q\right)=\mathcal{L}\mathcal{C}^{-1}_y\left(\mathcal{D}_x^p\left(\boldsymbol{\cdot}\right)\right)\left(q\right).
\end{equation}

The upcoming results for the joint transforms from \eqref{functionalbasic} will be derived under the inverse of composed operator D$_{pq}$. The utility of D$^{-1}_{xy}$ is at the heart of the derivation of joint and marginal transforms, which can yield results such as moments and distributions, of the components of process $\left(\mathcal{X}\otimes\mathcal{Y}\otimes\mathcal{T}\right)_\mu$ upon threshold crossings in a fully explicit form.

The below Theorem 2.1 establishes an analytically tractable$\,$formula for $\varPhi_{\mu<\nu}.\,$With \eqref{initialincrementbasic}-\eqref{incrementbasic}, we abbreviate
\begin{align}\label{firstbasicnotation}
&\gamma=\gamma\left(\alpha_0\alpha x,\beta_0+\beta+y,h_0+h\right)\\
&\gamma_0=\gamma_0\left(\alpha_0\alpha x,\beta_0+\beta+y,h_0+h\right)\\
&\varGamma=\gamma\left(\alpha x,\beta+y,h\right)\\
&\varGamma_0=\gamma_0\left(\alpha x,\beta+y,h\right)\\
&\varGamma^1=\gamma\left(\alpha,\beta+y,h\right)\\
&\label{lastbasicnotation}\varGamma^1_0=\gamma_0\left(\alpha,\beta+y,h\right)
\end{align}

\begin{thm}
In light of abbreviations \eqref{firstbasicnotation}-\eqref{lastbasicnotation}, the functional $\varPhi_{\mu<\nu}$ of the process on the trace $\sigma$-algebra $\mathcal{F}\left(\Omega\right)\cap\left\{\mu<\nu\right\}$ satisfies the following formula:
\begin{equation}
\varPhi_{\mu<\nu}=$D$_{xy}^{-1}\left(\varGamma_0^1-\varGamma_0+\frac{\gamma_0}{1-\gamma}\left(\varGamma^1-\varGamma\right)\right)\left(M,V\right)
\end{equation}
where D$^{-1}_{xy}$ is the inverse of operator D introduced in \eqref{doperatorbasic}.
\end{thm}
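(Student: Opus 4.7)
The plan is to embed $\varPhi_{\mu<\nu}$ in a two-parameter family where the fixed thresholds $(M,V)$ are replaced by generic $(p,q)\in\mathbb{Z}_{\geq 0}\times\mathbb{R}_{+}$, and then apply the composite operator $\text{D}_{pq}=\mathcal{L}\mathcal{C}_q\circ\mathcal{D}_p$ so as to convert the two threshold-crossing indicators into analytic factors compatible with the iid increment structure. Denote by $\mu_p=\inf\{n\geq 0:N_n>p\}$, $\nu_q=\inf\{n\geq 0:W_n>q\}$, and $\varPhi_{\mu_p<\nu_q}(p,q)$ the analogue of $\varPhi_{\mu<\nu}$ with the generic thresholds; then $\varPhi_{\mu<\nu}=\varPhi_{\mu_M<\nu_V}(M,V)$, and my strategy is to compute $\text{D}_{pq}\varPhi_{\mu_p<\nu_q}$ in closed form and then apply $\text{D}_{xy}^{-1}$ evaluated at $(M,V)$.

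First I would decompose $\varPhi_{\mu_p<\nu_q}=\sum_{n=0}^\infty\Psi_n(p,q)$ according to the value of $\mu_p$, where $\Psi_n$ collects the expectation on $\{\mu_p=n,\nu_q>n\}$. Because $N_n$ and $W_n$ are nondecreasing in $n$, this event simplifies to $\{N_{n-1}\leq p<N_n,\,W_n\leq q\}$ (with $N_{-1}=0$ when $n=0$). Direct computation from the definitions gives the key identities
\begin{align*}
\mathcal{D}_p\bigl(\boldsymbol{1}_{\{N_{n-1}\leq p<N_n\}}\bigr)(x)&=(1-x)\sum_{p=N_{n-1}}^{N_n-1}x^p=x^{N_{n-1}}-x^{N_n},\\
\mathcal{L}\mathcal{C}_q\bigl(\boldsymbol{1}_{\{W_n\leq q\}}\bigr)(y)&=y\int_{W_n}^\infty e^{-yq}\,dq=e^{-yW_n},
\end{align*}
so that after exchanging $\text{D}_{pq}$ with the expectation (legitimate by Fubini on nonnegative integrands), $\text{D}_{pq}\Psi_n$ becomes an ordinary expectation in which the two indicators have been replaced by the product $(x^{N_{n-1}}-x^{N_n})e^{-yW_n}$.

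For $n\geq 1$ I would substitute $N_n=N_{n-1}+X_n$, $W_n=W_{n-1}+Y_n$, $\tau_n=\tau_{n-1}+\Delta_n$, regroup as $\alpha_0^{N_{n-1}}\alpha^{N_n}(x^{N_{n-1}}-x^{N_n})=(\alpha_0\alpha x)^{N_{n-1}}\bigl(\alpha^{X_n}-(\alpha x)^{X_n}\bigr)$, and use independence of the $n$th increment from $(N_{n-1},W_{n-1},\tau_{n-1})$ to split the expectation into a past piece and a current piece. Applying \eqref{initialincrementbasic}--\eqref{incrementbasic} together with the iid property of the $\Delta_i,X_i,Y_i$ for $i\geq 1$ identifies the past piece as $\gamma_0\gamma^{n-1}$ and the current piece as $\varGamma^{1}-\varGamma$ in the abbreviations \eqref{firstbasicnotation}--\eqref{lastbasicnotation}. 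A geometric summation over $n\geq 1$ then yields $\gamma_0(\varGamma^{1}-\varGamma)/(1-\gamma)$. The $n=0$ term is handled by hand: with the conventions $N_{-1}=W_{-1}=\tau_{-1}=0$, $\text{D}_{pq}\Psi_0=E\bigl[\alpha^{N_0}(1-x^{N_0})e^{-(\beta+y)W_0-h\tau_0}\bigr]=\varGamma_0^{1}-\varGamma_0$. Summing the two contributions and applying $\text{D}_{xy}^{-1}$ at $(M,V)$ produces the claimed formula.

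The hard part will be bookkeeping rather than any deep estimate. I would need to verify that $|\gamma|<1$ on the domain of $(x,y)$ used by the operator (which follows from $L,l$ being LSTs and $g$ a PGF, so that $\lambda-\lambda g(\alpha_0\alpha x\,l(\beta_0+\beta+y))$ has nonnegative real part and hence $|L(\cdot)|\leq 1$), justify the Fubini exchanges that push $\text{D}_{pq}$ past both the expectation and the series, and keep the $n=0$ conventions straight so that the ``past'' functional collapses correctly. Once these checks are in place, the remainder is an essentially mechanical algebraic rearrangement driven by the renewal structure and the two key identities above.
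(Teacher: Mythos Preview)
Your proposal is correct and follows the same overall strategy as the paper --- parametrize the thresholds, apply $\text{D}_{pq}$ to turn the crossing indicators into analytic factors, exploit the iid increment structure, then invert --- but your decomposition is a genuine simplification. The paper decomposes $\{\mu<\nu\}=\bigcup_{j\geq 0}\bigcup_{k>j}\{\mu=j,\nu=k\}$, computes $\text{D}_{pq}\boldsymbol{1}_{\{\mu(p)=j,\nu(q)=k\}}=(x^{N_{j-1}}-x^{N_j})(e^{-yW_{k-1}}-e^{-yW_k})$, and then must sum over $k>j$; this inner sum telescopes to $1$, but forces the authors into a slightly awkward discussion of whether the post-$\mu$ increments $Y_{j+1},Y_{j+2},\ldots$ might have a different law on the trace $\sigma$-algebra. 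You instead use the single decomposition $\{\mu<\nu\}=\bigcup_{n\geq 0}\{\mu=n,\nu>n\}$ and observe directly that $\{\nu_q>n\}=\{W_n\leq q\}$, so that $\mathcal{LC}_q$ gives $e^{-yW_n}$ in one step. This collapses the paper's $k$-sum before it ever appears, avoids any reference to the process beyond time $\tau_\mu$, and lands on the same factorization $\gamma_0\gamma^{n-1}(\varGamma^1-\varGamma)$ with less bookkeeping. The paper's double-index route has the advantage that the same indicator identity is reused verbatim for $\varPhi_{\mu=\nu}$ and $\varPhi_{\mu>\nu}$, whereas your shortcut is tailored to $\{\mu<\nu\}$; but for the theorem at hand your argument is cleaner.
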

\begin{proof}
Introduce$\,$the families of indices
\begin{equation}
\mu\left(p\right)=\inf\left\{j:N_j>p\right\}
\end{equation}
\begin{equation}
\nu\left(q\right)=\inf\left\{k:W_k>q\right\}
\end{equation}
and
\begin{equation}
\varPhi_{\mu\left(p\right)<\nu\left(q\right)}=E\biggl[\alpha_0^{N_{\mu\left(p\right)-1}}\alpha^{N_{\mu\left(p\right)}}e^{-\beta_0W_{\mu\left(p\right)-1}-\beta W_{\mu\left(p\right)}-h_0\tau_{\mu\left(p\right)-1}-h\tau_{\mu\left(p\right)}}\boldsymbol{1}_{\left\{\mu\left(p\right)<\nu\left(q\right)\right\}}\biggr]
\end{equation}
In particular, we have
\begin{equation}
\varPhi_{\mu<\nu}=\varPhi_{\mu\left(M\right)<\nu\left(V\right)}.
\end{equation}

Application of D$_{pq}$ to $\varPhi_{\mu\left(p\right)\nu\left(q\right)}$ will bypass all terms except 1$_{\left\{\mu\left(p\right)<\nu\left(q\right)\right\}}$. Thus, after applying the operator D$_{pq}$ to random family $\left\{\boldsymbol{1}_{\{\mu\left(p\right)=j,\nu\left(q\right)=k\}}:p\geq 0,q\geq 0\right\}$ we arrive at
\begin{equation}\label{insensitivitybasic}
\text{D}_{pq}\left(\boldsymbol{1}_{\left\{\mu\left(p\right)=j,\nu\left(q\right)=k\right\}}\right)(x,y)=\left(x^{N_{j-1}}-x^{N_j}\right)\left(e^{-yW_{k-1}}-e^{-yW_k}\right)
\end{equation}
We first notice that
\begin{equation}
\textbf{1}_{\left\{\mu\left(p\right)=j,\nu\left(q\right)=k\right\}}=\left(\boldsymbol{1}_{\left\{N_{j-1}\leq p\right\}}\boldsymbol{1}_{\left\{N_j>p\right\}}\right)\left(\boldsymbol{1}_{\left\{W_{k-1}\leq q\right\}}\boldsymbol{1}_{\left\{W_k>q\right\}}\right)
\end{equation}
Then, we have
\begin{equation}
\text{D}_{pq}\left(\boldsymbol{1}_{\left\{\mu\left(p\right)=j,\nu\left(q\right)=k\right\}}\right)\left(x,y\right)=y\left(1-x\right)\sum\limits_{p=N_{j-1}}^{N_j-1}x^p\int_{q=W_{k-1}}^{W_k}e^{-yq}dq
\end{equation}
which yields \eqref{insensitivitybasic}. Denote
\begin{equation}
\varPsi\left(x,y\right)=\text{D}_{pq}\left(\varPhi_{\mu\left(p\right)\nu\left(q\right)}\right)\left(x,y\right).
\end{equation}
Since
\begin{equation}\label{insensitivitybasicfubinistep}
\varPhi_{\mu\left(p\right)<\nu\left(q\right)}=\sum\limits_{j\geq 0}\,\sum\limits_{k>j}E\biggl[\alpha_0^{N_{j-1}}\alpha^{N_j}e^{-\beta_0W_{j-1}-\beta W_j-h_0\tau_{j-1}-h\tau_j}\boldsymbol{1}_{\left\{\mu\left(p\right)=j,\,\nu\left(q\right)=k\right\}}\biggr]
\end{equation}
by Fubini's Theorem and \eqref{insensitivitybasicfubinistep}, we have
\begin{align}
\varPsi(x,y)&=\sum\limits_{j\geq 0}\sum\limits_{k>j}E\biggl[\alpha_0^{N_{j-1}}\alpha^{N_j}e^{-\beta_0W_{j-1}-\beta W_j-h_0\tau_{j-1}-h\tau_j}\left(x^{N_{j-1}}-x^{N_j}\right)\left(e^{-yW_{k-1}}-e^{-WB_k}\right)\biggr]\notag
\\&=\sum\limits_{j\geq
0}\,\sum\limits_{k>j}R_{1j}R_{2j}R_{3jk}R_{4k}
\end{align}
where
\begin{equation}
R_{1j}=E\biggl[\left(\alpha_0\alpha x\right)^{N_{j-1}}e^{-\left(\beta_0+\beta+y\right)W_{j-1}-\left(h_0+h\right)\tau_{j-1}}\biggr]=\left\{
\begin{matrix}
1,&j=0\\
\gamma_0\gamma^{j-1},&j>0
\end{matrix}\right.
\end{equation}
\begin{equation}
R_{2j}=E\biggl[\alpha^{X_j}\left(1-x^{X_j}\right)e^{-\left(\beta+y\right)Y_j-h\Delta_j}\biggr]=\left\{
\begin{matrix}
\varGamma_0^1-\varGamma_0,&j=0\\
\varGamma^1-\varGamma,&j>0.
\end{matrix}\right.
\end{equation}

We observe that the marginal distribution of $Y_i$'s for $i>\mu\,(=j)$ can be different from $Y_1,\ldots,Y_\mu,$ because after a number of nodes in excess of $M$ are purged at $\tau_\mu$, the forthcoming events (such as further damage to the network leading to a sure excess of weight above $V$ are so far limited to those on the sub-$\sigma$-algebra $\mathcal{F}(\Omega)\cap\left\{\mu<\nu\right\}$). The corresponding marginal transform of those $Y_i$'s can differ from $\gamma$ in the next two equations. However, this will not alter the result of the summation of $R_{3jk}R_{4k}$, as we will see. Without loss of generality we omit the corresponding formalism thus having
\begin{align}
&R_{3jk}=E\biggl[e^{-y\left(Y_{j+1}+\ldots+Y_{k-1}\right)}\biggr]=%
\gamma^{k-1-j}\left(1,y,0\right),\,k>j\geq
0
\\&R_{4k}=E\biggl[1-e^{-yY_k}\biggr]=1-\gamma\left(1,y,0\right),k>j\geq
0
\end{align}

At this moment, we will suspend the proof to derive and state some results necessary to the completion of the proof of Theorem 2.1.

\begin{thm}
The norm
\begin{equation}
\left\|L\left[\theta+\lambda-\lambda g\left[zl\left(v\right)\right]\right]\right\|=\left\|\gamma\left(z,v,\theta\right)\right\|<1
\end{equation}
if
\begin{equation}
\text{Re}\left(\theta\right)>0,\,1>\left\|z\right\|,\text{ and }\text{Re}\left(v\right)>0
\end{equation}
where any two of the inequalities can be weakened to $\geq$.
\end{thm}
\begin{proof}
Consider
\begin{equation}
L\left(\vartheta\right)=E\left[e^{-\vartheta\Delta_1}\right]=\int_{v%
\geq 0}e^{-\vartheta
v}P_{\Delta_1}\left(dv\right)
\end{equation}
From (2.44)
\begin{align}
\left\|L\left(\vartheta\right)\right\|&\leq\int_{v\geq 0}\left\|e^{-\vartheta v}\right\|P_{\Delta_1}\left(dv\right)=\int_{v\geq 0}e^{-\text{Re}\left(\vartheta\right)v}P_{\Delta_1}\left(dv\right)\notag
\\&\leq\int_{v=0}^1P_{\Delta_1}\left(dv\right)+e^{-\text{Re}\left(\vartheta\right)}\int_{v\geq 1}P_{\Delta_1}\left(dv\right)=c+\left(1-c\right)e^{-\text{Re}\left(\vartheta\right)}
\end{align}
where $c=P\left\{\Delta_1\leq 1\right\}$. If
\begin{equation}
c+\left(1-c\right)e^{-\text{Re}\left(\vartheta\right)}<1
\end{equation}
then $\left\|L\left(\vartheta\right)\right\|<1$, so
Re$\left(\vartheta\right)>0$ is sufficient for
$\left\|L\left(\vartheta\right)\right\|<1$. Also, for
$\vartheta=\theta+\lambda-\lambda g\left[zl\left(v\right)\right]$, we have 
\begin{equation}
e^{-\text{Re}\left(\vartheta\right)}=e^{-\text{Re}\left(\theta\right)}e^{-\lambda[1-\text{Re}\left(g\left(zl\left(v\right)\right)\right)]}<1
\end{equation}
Inequality (2.47) holds if each of the two factors in (2.48) is less than one, or in the weaker form, one of the factors equal to one and one is strictly less than one. Let us assume that each factor is less than one. Then, we have Re$\left(\theta\right)>0$ and Re$\left(g\left(zl\left(v\right)\right)\right)<1$.

Since Re$\left(g\left(zl\left(v\right)\right)\right)\leq\left\|g\left(zl\left(v\right)\right)\right\|$, by imposing $\left\|g\left(zl\left(v\right)\right)\right\|<1$ we ensure that Re$\left(g\left(zl\left(v\right)\right)\right)<1$, which is true by the Schwarz lemma below if $\left\|zl\left(v\right)\right\|<1$, as follows.
\end{proof}
\begin{thm}[Schwarz Lemma]
Let $g\left(z\right)$ be an analytic function inside the unit ball $B\left(0,1\right)$ and satisfies the condition $\left\|g\left(z\right)\right\|\leq 1$ and $g\left(0\right)=0$. Then $\left\|g\left(z\right)\right\|\leq\left\|z\right\|$ and $\left\|g'\left(0\right)\right\|\leq 1$. If $\left\|f\left(z\right)\right\|=\left\|z\right\|$ for some $z\neq 0,$ then $f\left(z\right)=cz,$where $c$ is a complex-valued constant of modulus $1$.
\end{thm}

Indeed, by the Schwarz lemma whose conditions are obviously met with $g\left(0\right)=0$, $\|g\left(zl\left(v\right)\right)\|\leq\|zl\left(v\right)\|$, and hence $\left\|g\left(zl\left(v\right)\right)\right\|<1$ if $\left\|zl\left(v\right)\right\|<1$ holds. We first show that $\left\|l\left(v\right)\right\|<1$ under the condition that Re$\left(v\right)>0$.

Proceeding with $\left\|l\left(v\right)\right\|$ exactly as with $L\left(\vartheta\right)$ we impose Re$\left(v\right)>0$ in order to have $\left\|l\left(v\right)\right\|<1$. Finally, $\left\|z\right\|\leq 1$ is the common domain for a pgf like $g\left(z\right)$. Imposing $\left\|z\right\|<1$ we can relax Re$\left(l\left(v\right)\right)$ to be $\geq 0$ or Re$\left(\theta\right)\geq 0$.\hfill{\qedsymbol}

Notice that upon application of operator D$_{pq}$ of (2.15), the output variables must be restricted to $\left\|\cdot\right\|<1$. Therefore, with Re$\left(\theta\right)\geq 0$, we have convergence. Also observe that Theorem 2.2 can be applied to the norm of $\gamma_0$ of \eqref{initialincrementbasic} under minor modifications.

Now we continue with the proof of Theorem 2.1. Summing up $\,\sum_{k>j}R_{3jk}R_{4k}\,$yields 1 (with $||\gamma\left(1,y,0\right)||<1$, under a minor sufficient condition given in Theorems 2.2 and 2.3). $\sum_{j\geq 0}R_{1j}R_{2j}$ converges by the same argument, yielding
\begin{equation}
\varPsi\left(x,y\right)=\varGamma_0^1-\varGamma_0+\frac{\gamma_0}{1-%
\gamma}\left(\varGamma^1-\varGamma\right).
\end{equation}
Applying the inverse operator D$_{xy}^{-1}$ of \eqref{doperatorinversebasic} to $\varPsi\left(x,y\right)$ yields the statement of Theorem 2.1.
\end{proof}

Proceeding as in Theorem 2.1, we can find the functionals $\varPhi_{\mu=\nu}$ and $\varPhi_{\mu>\nu}$, using some additional notation:
\begin{align}
&\label{firstnotationbasic2}\zeta=\gamma\left(\alpha x,\beta,h\right)
\\&\zeta_0=\gamma_0\left(\alpha x,\beta,h\right)
\\&\zeta^1=\gamma\left(\alpha,\beta,h\right)
\\&\label{lastnotationbasic2}\zeta^1_0=\gamma_0\left(\alpha,\beta,h\right)
\end{align}

\begin{prop}
In light of abbreviations \eqref{firstbasicnotation}-\eqref{lastbasicnotation} and \eqref{firstnotationbasic2}-\eqref{lastnotationbasic2}, the functional $\varPhi_{\mu=\nu}$ of the process on the trace $\sigma$-algebra $\mathcal{F}\left(\Omega\right)\cap\left\{\mu=\nu\right\}$ satisfies the following formula:
\begin{equation}
\varPhi_{\mu=\nu}=$D$_{xy}^{-1}\left(\zeta_0^1-\zeta_0-\varGamma^1_0+\varGamma_0+\frac{\gamma_0}{1-\gamma}\left(\zeta^1-\zeta-\varGamma^1+\varGamma\right)\right)\left(M,V\right)
\end{equation}
\end{prop}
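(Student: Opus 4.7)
The plan is to mimic the proof of Theorem 2.1 nearly verbatim, exploiting the simplification that the event $\{\mu=\nu\}$ collapses the would-be double sum over $(j,k)$ into a single sum over the diagonal $j=k$. First I would introduce the parametrized family
$$\varPhi_{\mu(p)=\nu(q)} = \sum_{j\geq 0} E\bigl[\alpha_0^{N_{j-1}}\alpha^{N_j}e^{-\beta_0 W_{j-1}-\beta W_j - h_0\tau_{j-1}-h\tau_j}\boldsymbol{1}_{\{\mu(p)=j,\,\nu(q)=j\}}\bigr],$$
so that $\varPhi_{\mu=\nu}=\varPhi_{\mu(M)=\nu(V)}$, and apply the operator $\text{D}_{pq}$. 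By \eqref{insensitivitybasic} specialized to $k=j$, the operator strips the indicator to the product $(x^{N_{j-1}}-x^{N_j})(e^{-yW_{j-1}}-e^{-yW_j})$, and Fubini's theorem permits interchanging $\text{D}_{pq}$ with the sum and expectation.

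Next I would use $N_j=N_{j-1}+X_j$, $W_j=W_{j-1}+Y_j$, and $\tau_j=\tau_{j-1}+\Delta_j$, together with the independence of the $j$th increment from the prior history, to factor the $j$th summand as $R_{1j}\cdot S_j$. The factor
$$R_{1j}=E\bigl[(\alpha_0\alpha x)^{N_{j-1}}e^{-(\beta_0+\beta+y)W_{j-1}-(h_0+h)\tau_{j-1}}\bigr]$$
is identical to the one appearing in Theorem 2.1, namely $1$ for $j=0$ and $\gamma_0\gamma^{j-1}$ for $j>0$. The novel factor is
$$S_j = E\bigl[\alpha^{X_j}(1-x^{X_j})\bigl(e^{-\beta Y_j}-e^{-(\beta+y)Y_j}\bigr)e^{-h\Delta_j}\bigr].$$
Expanding $(1-x^{X_j})(e^{-\beta Y_j}-e^{-(\beta+y)Y_j})$ produces four transforms in $\gamma$ and $\gamma_0$ at arguments among $\{(\alpha,\beta,h),\,(\alpha,\beta+y,h),\,(\alpha x,\beta,h),\,(\alpha x,\beta+y,h)\}$, which by \eqref{firstbasicnotation}--\eqref{lastbasicnotation} and \eqref{firstnotationbasic2}--\eqref{lastnotationbasic2} are precisely $\zeta^1,\varGamma^1,\zeta,\varGamma$ (respectively their delayed counterparts when $j=0$), yielding $S_0=\zeta_0^1-\zeta_0-\varGamma_0^1+\varGamma_0$ and $S_j=\zeta^1-\zeta-\varGamma^1+\varGamma$ for $j>0$.

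Summing the resulting series, with convergence of the geometric tail in $\gamma$ ensured by Theorem 2.2, gives
$$\text{D}_{pq}(\varPhi_{\mu(p)=\nu(q)})(x,y) = \zeta_0^1-\zeta_0-\varGamma_0^1+\varGamma_0 + \frac{\gamma_0}{1-\gamma}\bigl(\zeta^1-\zeta-\varGamma^1+\varGamma\bigr),$$
and applying $\text{D}_{xy}^{-1}$ at $(M,V)$ produces the stated formula for $\varPhi_{\mu=\nu}$.

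The main obstacle is purely bookkeeping: carefully expanding the product in $S_j$ into four pieces and matching each to the correct abbreviation, recognizing $\zeta$ versus $\varGamma$ according to whether $y$ is absent or present in the weight argument, and the $0$- and $1$-superscripts according to whether $j=0$ versus $j>0$ and whether the $x$-factor is absorbed. Unlike the $\mu<\nu$ case, there is no inner sum in $k>j$ and no delicate issue with post-$\mu$ marginals of $Y_i$, since simultaneity of the two crossings forces everything to happen at the single index $j$.
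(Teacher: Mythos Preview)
Your proposal is correct and is exactly the approach the paper intends: it explicitly says ``Proceeding as in Theorem 2.1, we can find the functionals $\varPhi_{\mu=\nu}$ and $\varPhi_{\mu>\nu}$'' without supplying further details, and your diagonal specialization $k=j$ of \eqref{insensitivitybasic}, the factorization $R_{1j}S_j$, and the four-term expansion of $S_j$ into $\zeta^1-\zeta-\varGamma^1+\varGamma$ (with subscripted versions at $j=0$) fill in precisely the omitted steps.
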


\begin{prop}
In light of abbreviations \eqref{firstbasicnotation}-\eqref{lastbasicnotation} and \eqref{firstnotationbasic2}-\eqref{lastnotationbasic2}, the functional $\varPhi_{\mu>\nu}$ of the process on the trace $\sigma$-algebra $\mathcal{F}\left(\Omega\right)\cap\left\{\mu>\nu\right\}$ satisfies the
following formula:
\begin{equation}
\varPhi_{\mu>\nu}=$D$_{xy}^{-1}\left(\zeta_0-\varGamma_0+\frac{\gamma_0}{1-\gamma}\left(\zeta-\varGamma\right)\right)\left(M,V\right)
\end{equation}
\end{prop}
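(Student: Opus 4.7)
The plan is to mirror the argument used for Theorem 2.1, with the roles of the two components interchanged, since on $\{\mu>\nu\}$ the first observed passage time is $\tau_\nu$ rather than $\tau_\mu$. I would introduce the parametric families $\mu(p),\nu(q)$ exactly as in (2.29)--(2.30) and define
\begin{equation*}
\varPhi_{\mu(p)>\nu(q)}=E\!\left[\alpha_0^{N_{\nu(q)-1}}\alpha^{N_{\nu(q)}}e^{-\beta_0W_{\nu(q)-1}-\beta W_{\nu(q)}-h_0\tau_{\nu(q)-1}-h\tau_{\nu(q)}}\boldsymbol{1}_{\{\mu(p)>\nu(q)\}}\right]
\end{equation*}
so that $\varPhi_{\mu>\nu}=\varPhi_{\mu(M)>\nu(V)}$, apply the operator $\mathrm{D}_{pq}$ and invoke identity (2.34) for $\mathrm{D}_{pq}(\boldsymbol{1}_{\{\mu(p)=j,\nu(q)=k\}})$, which is insensitive to the ordering of $j$ and $k$. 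After swapping sum and expectation by Fubini, the task reduces to evaluating
$\Psi(x,y)=\sum_{k\geq 0}\sum_{j>k}E[\,\cdot\,]$.

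Next I would split the integrand by index: for $i\in\{0,\ldots,k-1\}$ the factors assemble into $(\alpha_0\alpha x)^{X_i}e^{-(\beta_0+\beta+y)Y_i-(h_0+h)\Delta_i}$, contributing $\gamma_0$ at $i=0$ and $\gamma$ for $1\leq i\leq k-1$; at $i=k$ the factors $\alpha^{X_k}x^{X_k}e^{-\beta Y_k}(1-e^{-yY_k})e^{-h\Delta_k}$ arise from the observed-passage term together with the $y$-piece of the indicator, giving $\zeta_0-\varGamma_0$ if $k=0$ and $\zeta-\varGamma$ if $k\geq 1$; for $k+1\leq i\leq j-1$ only the $x$-factor $x^{X_i}$ survives, with expectation $\gamma(x,0,0)$; and at $i=j$ the factor $1-x^{X_j}$ contributes $1-\gamma(x,0,0)$. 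Thus, writing $\tilde\gamma=\gamma(x,0,0)$,
\begin{equation*}
\Psi(x,y)=\sum_{k\geq 0}A_kB_k\sum_{j>k}\tilde\gamma^{\,j-1-k}(1-\tilde\gamma),
\end{equation*}
with $A_0=1$, $A_k=\gamma_0\gamma^{k-1}$ for $k\geq 1$, and $B_0=\zeta_0-\varGamma_0$, $B_k=\zeta-\varGamma$ for $k\geq 1$.

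The final step is purely computational: the inner geometric sum telescopes to $1$, and the outer sum yields
\begin{equation*}
\Psi(x,y)=\zeta_0-\varGamma_0+\gamma_0(\zeta-\varGamma)\sum_{k\geq 1}\gamma^{k-1}=\zeta_0-\varGamma_0+\frac{\gamma_0}{1-\gamma}(\zeta-\varGamma),
\end{equation*}
after which applying $\mathrm{D}^{-1}_{xy}$ at $(M,V)$ delivers the claimed formula.

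The principal obstacle is not algebraic but analytic: to legitimize the term-by-term Fubini interchange and the evaluation of the geometric series, one must check $\|\tilde\gamma\|<1$ and $\|\gamma\|<1$ on the domain of application of $\mathrm{D}_{pq}$. This is exactly the verification handled in Theorems 2.2--2.3, which apply verbatim here since the output variables $x,y$ are restricted to $\|x\|<1$ and $\mathrm{Re}(y)>0$ after the Laplace--Carson and $\mathcal{D}$ transforms. A secondary bookkeeping nuisance is the special treatment of $k=0$ (where $N_{-1},W_{-1},\tau_{-1}=0$), which is why the initial summand splits off and accounts for the $\zeta_0-\varGamma_0$ and $\gamma_0$ factors in the final expression.
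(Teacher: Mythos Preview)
Your proposal is correct and follows essentially the same route the paper indicates: Proposition 2.5 is stated without a separate proof, the paper simply saying ``Proceeding as in Theorem 2.1,'' and your argument is precisely that mirror computation with the roles of the $x$- and $y$-factors swapped. The only nuance you leave implicit is the paper's remark (made in the proof of Theorem 2.1) that the marginal law of the increments $X_i$ for $i>\nu$ could in principle differ on the trace $\sigma$-algebra, but since the inner sum $\sum_{j>k}\tilde\gamma^{\,j-1-k}(1-\tilde\gamma)$ telescopes to $1$ regardless, this does not affect the outcome.
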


By the linearity of the inverse operator and expectation, the previous 3 results yield the functional of the process no longer confined to a particular ordering of the exit indices:

\begin{cor}
In light of abbreviations \eqref{firstbasicnotation}-\eqref{lastbasicnotation} and \eqref{firstnotationbasic2}-\eqref{lastnotationbasic2}), the functional $\varPhi$ of the process on the $\sigma$-algebra $\mathcal{F}\left(\Omega\right)$ satisfies the following formula:
\begin{equation}
\varPhi=$D$_{xy}^{-1}\left(\zeta_0^1-\varGamma_0+\frac{\gamma_0}{1-\gamma}\left(\zeta^1-\varGamma\right)\right)\left(M,V\right)
\end{equation}
\end{cor}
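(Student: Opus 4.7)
The plan is to exploit the decomposition \eqref{functionalbasic}, namely $\varPhi = \varPhi_{\mu<\nu} + \varPhi_{\mu=\nu} + \varPhi_{\mu>\nu}$, together with the linearity of $\mathrm{D}_{xy}^{-1}$ and of expectation. Since Theorem 2.1, Proposition 2.4, and Proposition 2.5 already give the three summands in the closed form $\mathrm{D}_{xy}^{-1}(\Psi_{*})(M,V)$ for appropriate $\Psi_{*}(x,y)$, the only thing left to do is collect these three $\Psi_{*}$'s, add them inside the single inverse operator, and verify that the massive cancellation yields precisely the integrand stated in the corollary.

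Concretely, the first step is to write
\[
\varPhi = \mathrm{D}_{xy}^{-1}\bigl(\Psi_{\mu<\nu}+\Psi_{\mu=\nu}+\Psi_{\mu>\nu}\bigr)(M,V),
\]
where $\Psi_{\mu<\nu} = \varGamma_0^1 - \varGamma_0 + \tfrac{\gamma_0}{1-\gamma}(\varGamma^1 - \varGamma)$, $\Psi_{\mu=\nu} = \zeta_0^1 - \zeta_0 - \varGamma_0^1 + \varGamma_0 + \tfrac{\gamma_0}{1-\gamma}(\zeta^1 - \zeta - \varGamma^1 + \varGamma)$, and $\Psi_{\mu>\nu} = \zeta_0 - \varGamma_0 + \tfrac{\gamma_0}{1-\gamma}(\zeta - \varGamma)$. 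The pulling of the sum inside $\mathrm{D}_{xy}^{-1}$ is justified by the linearity of each constituent operator ($\mathcal{L}\mathcal{C}_y^{-1}$ and $\mathcal{D}_x^p$) in its argument; the interchange of summation and expectation underlying linearity is already licensed by the convergence arguments invoked in the proofs of the three summands (Theorem 2.2).

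The second step is the algebraic simplification. Grouping the terms outside the fraction, the $\varGamma_0^1$ terms cancel, a pair of $\varGamma_0$ terms cancel, and one $\zeta_0$ pair cancels, leaving $\zeta_0^1 - \varGamma_0$. Inside the fraction, the $\varGamma^1$ terms cancel, the $\varGamma$ terms cancel (one pair against another), and the $\zeta$ pair cancels, leaving $\zeta^1 - \varGamma$. This produces
\[
\Psi_{\mu<\nu}+\Psi_{\mu=\nu}+\Psi_{\mu>\nu} \;=\; \zeta_0^1 - \varGamma_0 + \frac{\gamma_0}{1-\gamma}\bigl(\zeta^1-\varGamma\bigr),
\]
which upon applying $\mathrm{D}_{xy}^{-1}(\cdot)(M,V)$ gives exactly the asserted formula.

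There is essentially no technical obstacle here beyond verifying the cancellations carefully; the corollary is a bookkeeping consequence of the three preceding results, and the only caveat worth highlighting is the linearity/convergence justification for commuting $\mathrm{D}_{xy}^{-1}$ with the finite sum, which follows directly from Theorem 2.2 ensuring $\|\gamma\|<1$ on the relevant domain so that each $\Psi_*$ (and hence their sum) lies in the domain of $\mathrm{D}_{xy}^{-1}$.
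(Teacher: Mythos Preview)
Your proposal is correct and follows precisely the approach the paper itself indicates: the paper states only that ``by the linearity of the inverse operator and expectation, the previous 3 results yield'' the corollary, and you have simply made the algebraic cancellation explicit. There is nothing to add.
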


Note that the basic model in this section, in a much more rudimentary form was proposed and studied in Dshalalow \lbrack 7\rbrack\ and further in Dshalalow and Liew \lbrack 9\rbrack.

\section{\protect\centering An Auxiliary Threshold for the Discrete Component}

\textbf{Preliminaries.} In this section we will analyze the process introduced in Section 2 more scrupulously. We will add an intermediate control level $M_1<M$ and incorporate it into the \ functional $\varPhi_{\mu<\nu}\,$of (2.26). The information associated with the $M_1$-threshold will become more conclusive on what leads to critical threshold crossings of the network. The idea of an auxiliary threshold stems from Dshalalow and Ke \lbrack 8\rbrack\ applied there to stochastic games, with different utility and development compared to the present paper.

We define the corresponding exit indices
\begin{align}
&\mu_1=\inf\left\{j:N_j>M_1\right\}
\\&\mu=\inf\left\{k:N_k>M\right\}
\\&\nu=\inf\left\{n:W_n>V\right\}
\end{align}

A realization of process $\mathcal{X}\otimes\mathcal{Y}\otimes\mathcal{T}$ (of losses defined in \eqref{delayedrenewalprocess}) in Figure 1 illustrates how it operates with respect to the introduced main and auxiliary thresholds. We can regard $\mathcal{X}\otimes\mathcal{Y}\otimes\mathcal{T}$ as a two-dimensional random walk on a random grid (rather than traditional lattice). We have a rectangular region formed of rectangular sectors in white, green, and red colors. In real-time a ``particle'' tries to escape the white-green area at the first opportune time when the cumulative loss of nodes exceeds $M$ or the cumulative weight loss exceeds $V$, whichever comes first. It leaves a polygonal path in blue and a cruder, observed, path ingreen. The particle enters the green area indicating that a lower threshold $M_1$ is crossed, while neither $M$ nor $V$ was. In reality, the green area can be empty with a positive probability.

In the figure, the underlying real-time process (the blue dots) represents the real-time incoming damages, which are observed only upon $\tau_k$'s (depicted by the green dots), where the $M_1$-observed-crossing occurs before the first observed passage time (FOPT) of $M$ or $V$ (i.e. there is an observation in the green area), at which the components of the process may or may not coincide with their values at the real-time FPT.

\begin{center}
\includegraphics[scale=0.5]{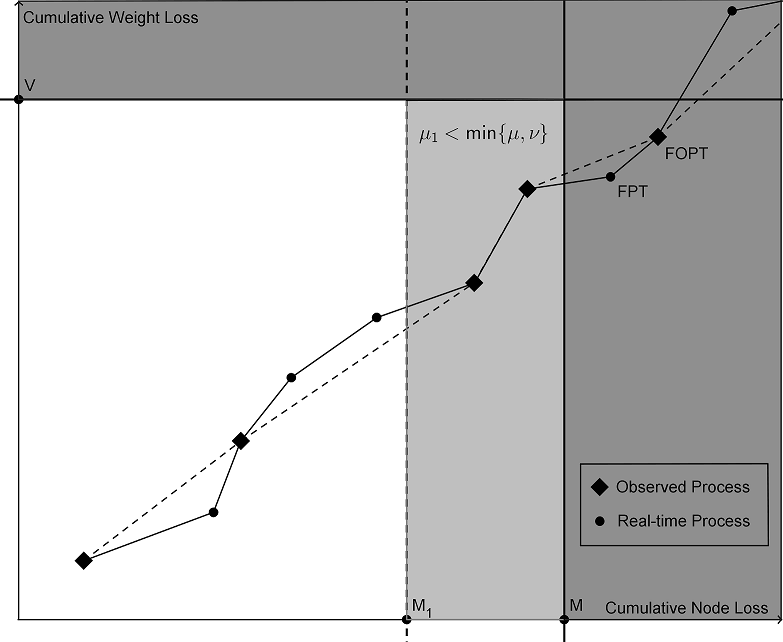}
\end{center}

Here, the operator we will use contains an additional discrete transform $\mathcal{D}_q$ corresponding to the $M_1$ threshold.
\begin{equation}\label{auxdoperator}
\text{D}_{pqs}\left(\boldsymbol{\cdot}\right)\left(x,y,w\right)=\left(1-x\right)\left(1-y\right)w\sum\limits_{p=0}^\infty x^p\sum\limits_{q=0}^\infty y^q\int_{s=0}^\infty e^{-ws}\left(\boldsymbol{\cdot}\right)ds
\end{equation}
where $\left|x\right|<1,\left|y\right|<1,$Re$\left(w\right)>0$, i.e. D$_{pqs}=\mathcal{L}\mathcal{C}_s\circ\mathcal{D}_q\circ\mathcal{D}_p$. Analogous to the operator in Section 2, D$_{xyw}^{-1}$ denotes the inverse of D$_{xyw}$.

The following result seems surprising. Unlike Section 2, with independent thresholds $p$ and $q$ for two active components, now we have two of the three thresholds dependent (since $M_1<M$) and yet the below lemma asserts that the application of operator D$_{pqs}$ to the analogous indicator produces the same result as the case with no relationship between variable levels $p$ and $q$ (representing $M_1$ and $M$, respectively). This is a critical asset for our strategy of obtaining closed-form functionals.

\begin{lem}[$M_1$-Level Insensitivity]
\begin{align}
&\text{D}_{pqs}\left(\boldsymbol{1}_{\left\{\mu_1\left(p,q\right)=j,\mu\left(p,q\right)=k,\nu\left(p,q,s\right)=n\right\}}\boldsymbol{1}_{\left\{j<k<n\right\}}\right)(x,y,w)\notag
\\&=\left(x^{N_{j-1}}-x^{N_j}\right)\left(y^{N_{k-1}}-y^{N_k}\right)\left(e^{-wW_{n-1}}-e^{-wW_n}\right)\boldsymbol{1}_{\left\{j<k<n\right\}}
\end{align}
\end{lem}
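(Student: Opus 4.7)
My plan is to exploit the product structure of the operator $\text{D}_{pqs}=\mathcal{L}\mathcal{C}_s\circ\mathcal{D}_q\circ\mathcal{D}_p$, whose three constituents act on the three independent summation/integration variables $p,q,s$. The idea is to split the indicator into a product of three factors, each depending on only one of $p,q,s$, and then compute each one-variable transform in turn.

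The first step is the factorization of the indicator. Since $\mu_1(p,q)=j$ is equivalent to $N_{j-1}\le p<N_j$, $\mu(p,q)=k$ to $N_{k-1}\le q<N_k$, and $\nu(p,q,s)=n$ to $W_{n-1}\le s<W_n$, I obtain
\[\boldsymbol{1}_{\{\mu_1=j,\,\mu=k,\,\nu=n\}}=\boldsymbol{1}_{\{N_{j-1}\le p<N_j\}}\,\boldsymbol{1}_{\{N_{k-1}\le q<N_k\}}\,\boldsymbol{1}_{\{W_{n-1}\le s<W_n\}},\]
while the outer factor $\boldsymbol{1}_{\{j<k<n\}}$ is constant in $(p,q,s)$ and passes through $\text{D}_{pqs}$ unchanged.

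The three elementary computations are then routine, fully analogous to the step leading to (2.28) in Section 2: $\mathcal{D}_p$ applied to $\boldsymbol{1}_{\{N_{j-1}\le p<N_j\}}$ telescopes to $x^{N_{j-1}}-x^{N_j}$; $\mathcal{D}_q$ applied to $\boldsymbol{1}_{\{N_{k-1}\le q<N_k\}}$ gives $y^{N_{k-1}}-y^{N_k}$; and $\mathcal{L}\mathcal{C}_s$ applied to $\boldsymbol{1}_{\{W_{n-1}\le s<W_n\}}$ yields $w\int_{W_{n-1}}^{W_n} e^{-ws}\,ds=e^{-wW_{n-1}}-e^{-wW_n}$. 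Multiplying the three factors together (and reinstating the deterministic $\boldsymbol{1}_{\{j<k<n\}}$) reproduces the right-hand side of the claim.

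The step I expect to be the main obstacle is the conceptual subtlety the authors flag in the paragraph preceding the lemma. In the underlying model the thresholds $M_1<M$ are constrained, so after replacing them with the free variables $p$ and $q$ one would naively expect $\text{D}_{pqs}$ to sum only over $0\le p<q$, not over all pairs $(p,q)$ with $p,q\ge 0$ independently. To justify that the unrestricted operator gives the correct answer, I would argue that on the event $\{j<k\}$ the monotonicity of the partial sums $N_i$ forces $p<N_j\le N_{k-1}\le q$, so the joint indicator already vanishes on $\{p\ge q\}$. Consequently the restricted and the unrestricted double sums agree, which is precisely the ``$M_1$-level insensitivity'' the lemma is asserting. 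Once this observation is in place, the factor-by-factor computation above closes the proof.
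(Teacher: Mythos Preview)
Your proposal is correct and follows essentially the same route as the paper: factor the indicator as $\boldsymbol{1}_{\{N_{j-1}\le p<N_j\}}\boldsymbol{1}_{\{N_{k-1}\le q<N_k\}}\boldsymbol{1}_{\{W_{n-1}\le s<W_n\}}$, apply each of $\mathcal{D}_p$, $\mathcal{D}_q$, $\mathcal{L}\mathcal{C}_s$ separately, and multiply. The paper carries the constraint $\boldsymbol{1}_{\{p<q\}}$ through the factorization and then silently drops it when writing the sums $\sum_{p=N_{j-1}}^{N_j-1}\sum_{q=N_{k-1}}^{N_k-1}$; your explicit observation that on $\{j<k\}$ the indicators already force $p<N_j\le N_{k-1}\le q$ is exactly the justification the paper leaves implicit, and is in fact the content of the ``insensitivity'' the lemma is named for.
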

\begin{proof}
Let
\begin{align}
&\mu_1\left(p,q\right):=\inf\left\{j:N_j>p\right\}
\\&\mu\left(p,q\right):=\inf\left\{k:N_k>q\right\}
\\&\nu\left(p,q,s\right):=\inf\left\{n:W_n>s\right\}
\end{align}
We have
\begin{align*}
&\boldsymbol{1}_{\left\{\mu_1\left(p,q\right)=j,\mu\left(p,q\right)=k,\nu\left(p,q,s\right)=n\right\}}
\\&=\boldsymbol{1}_{\left\{p<q\right\}}\boldsymbol{1}_{\left\{N_{j-1}\leq p\right\}}\boldsymbol{1}_{\left\{N_j>p\right\}}\boldsymbol{1}_{\left\{N_{k-1}\leq q\right\}}\boldsymbol{1}_{\left\{N_k>q\right\}}\boldsymbol{1}_{\left\{W_{n-1}\leq s\right\}}\boldsymbol{1}_{\left\{W_n>s\right\}}\boldsymbol{1}_{\left\{j<k<n\right\}}
\\&=\boldsymbol{1}_{\left\{N_{j-1}\leq p\right\}}\boldsymbol{1}_{\left\{N_j>p\right\}}\left(\boldsymbol{1}_{\left\{N_k>q\right\}}-\boldsymbol{1}_{\left\{N_{k-1}>q\right\}}\right)\boldsymbol{1}_{\left\{W_{n-1}\leq s\right\}}\boldsymbol{1}_{\left\{W_n>s\right\}}\boldsymbol{1}_{\left\{p<q\right\}}\boldsymbol{1}_{\left\{j<k<n\right\}}
\end{align*}
Applying operator \eqref{auxdoperator}) we have
\begin{align*}
&\text{D}_{pqs}\left(\boldsymbol{1}_{\left\{\mu_1\left(p,q\right)=j,\mu\left(p,q\right)=k,\nu\left(p,q,s\right)=n\right\}}\right)\left(x,y,w\right)
\\&\hspace{1cm}=\left(1-x\right)\left(1-y\right)w\,\sum\limits_{p=N_{j-1}}^{N_j-1}x^p\,\sum\limits_{q=N_{k-1}}^{N_k-1}y^q\int_{s=W_{n-1}}^{W_n}e^{-ws}ds,
\end{align*}
which readily yields the statement of the lemma.
\end{proof}

The extended functional,
\begin{align}
\varPhi_{\mu_1<\mu<\nu}&=\varPhi_{\mu_1<\mu<\nu}\left(u_0,u,\alpha_0,\alpha,v_0,v,\beta_0,\beta,\theta_0,\theta,h_0,h\right)\notag
\\&=E\biggl[u_0^{N_{\mu_1-1}}u^{N_{\mu_1}}\alpha_0^{N_{\mu-1}}\alpha^{N_\mu}e^{-v_0W_{\mu_1-1}-vW_{\mu_1}-\beta_0W_{\mu-1}-\beta W_\mu}\notag
\\&\hspace{1cm}\times e^{-\theta_0\tau_{\mu_1-1}-\theta\tau_{\mu_1}-h_0\tau_{\mu-1}-h\tau_\mu}\boldsymbol{1}_{\left\{\mu_1<\mu<\nu\right\}}\biggr]
\end{align}
carries refined information about the cumulative damage upon crossing $M_1$, as well as upon the other reference times. Theorem 3.2 below will establish an explicit formula for $\Phi_{\mu_1<\mu<\nu}$.

Having \eqref{initialincrementbasic}-\eqref{incrementbasic} in mind and replacing the abbreviations of Section 2, we denote
\begin{align}
&\label{auxfirstnotation}\varphi=\gamma\left(u_0u\alpha_0\alpha xy,v_0+v+\beta_0+\beta+w,\theta_0+\theta+h_0+h\right)
\\&\varphi_0=\gamma_0\left(u_0u\alpha_0\alpha xy,v_0+v+\beta_0+\beta+w,\theta_0+\theta+h_0+h\right)
\\&\phi=\gamma\left(u\alpha_0\alpha xy,v+\beta_0+\beta+w,\theta+h_0+h\right)
\\&\phi_0=\gamma_0\left(u\alpha_0\alpha xy,v+\beta_0+\beta+w,\theta+h_0+h\right)
\\&\phi^1=\gamma\left(u\alpha_0\alpha y,v+\beta_0+\beta+w,\theta+h_0+h\right)
\\&\phi^1_0=\gamma_0\left(u\alpha_0\alpha y,v+\beta_0+\beta+w,\theta+h_0+h\right)
\\&\psi=\gamma\left(\alpha_0\alpha y,\beta_0+\beta+w,h_0+h\right)
\\&\chi=\gamma\left(\alpha y,\beta+w,h\right)
\\&\chi^1=\gamma\left(\alpha,\beta+w,h\right)
\\&\xi=\gamma\left(\alpha y,\beta,h\right)
\\&\label{auxlasnotation}\xi^1=\gamma\left(\alpha,\beta,h\right)
\end{align}

\begin{thm}
The functional $\varPhi_{\mu_1<\mu<\nu}$ of the of the network damage on the trace $\sigma$-algebra $\mathcal{F}\left(\Omega\right)\cap\left\{\mu_1<\mu<\nu\right\}$ satisfies the following formula under notation
\eqref{auxfirstnotation}-\eqref{auxlasnotation}:
\begin{equation}
\varPhi_{\mu_1<\mu<\nu}=$D$_{xyw}^{-1}\left(\left[\phi_0^1-\phi_0+\frac{\varphi_0}{1-\varphi}(\phi^1-\phi)\right]\frac{\chi^1-\chi}{1-\psi}\right)\left(M_1,M,V\right)
\end{equation}
\end{thm}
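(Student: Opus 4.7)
The approach parallels the proof of Theorem 2.1, now carried out over the triple decomposition induced by the parameterized indices $\mu_1(p,q)$, $\mu(p,q)$, and $\nu(p,q,s)$ introduced in Lemma 3.1. First I would write
\begin{equation*}
\varPhi_{\mu_1<\mu<\nu} = \varPhi_{\mu_1(M_1,M)<\mu(M_1,M)<\nu(M_1,M,V)}
\end{equation*}
and define the parameterized functional
\begin{equation*}
\varPhi_{\mu_1(p,q)<\mu(p,q)<\nu(p,q,s)} = \sum_{j\geq 0}\sum_{k>j}\sum_{n>k} E\bigl[\Lambda_{j,k,n}\,\boldsymbol{1}_{\{\mu_1(p,q)=j,\,\mu(p,q)=k,\,\nu(p,q,s)=n\}}\bigr]
\end{equation*}
where $\Lambda_{j,k,n}$ is the product of all factors $u_0^{N_{j-1}}u^{N_j}\alpha_0^{N_{k-1}}\alpha^{N_k}e^{-v_0W_{j-1}-vW_j-\beta_0W_{k-1}-\beta W_k - \theta_0\tau_{j-1}-\theta\tau_j-h_0\tau_{k-1}-h\tau_k}$ appearing in the definition of $\varPhi_{\mu_1<\mu<\nu}$.

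Next I would apply $\mathrm{D}_{pqs}$, move it past $E$ and the triple sum by Fubini (convergence is justified in Theorem 2.2), and invoke Lemma 3.1 to reduce the operator applied to the indicator to $(x^{N_{j-1}}-x^{N_j})(y^{N_{k-1}}-y^{N_k})(e^{-wW_{n-1}}-e^{-wW_n})$. Thanks to the independence of the marks $(X_i,Y_i,\Delta_i)$, the integrand then factors across the six index ranges $i<j$, $i=j$, $j<i<k$, $i=k$, $k<i<n$, $i=n$, producing a product
\begin{equation*}
R_{1j}\,R_{2j}\,R_{3jk}\,R_{4k}\,R_{5kn}\,R_{6n}.
\end{equation*}
Bookkeeping the contribution of each of the eight $u_\cdot,\alpha_\cdot,v_\cdot,\beta_\cdot,\theta_\cdot,h_\cdot$ parameters plus the three $D$-operator dummy variables across these ranges, and comparing with \eqref{auxfirstnotation}--\eqref{auxlasnotation}, yields $R_{1j}=1$ for $j=0$ and $\varphi_0\varphi^{j-1}$ for $j\geq 1$; $R_{2j}=\phi_0^1-\phi_0$ for $j=0$ and $\phi^1-\phi$ for $j\geq 1$; $R_{3jk}=\psi^{k-1-j}$; $R_{4k}=\chi^1-\chi$; and $R_{5kn}R_{6n}=\gamma(1,w,0)^{\,n-k-1}(1-\gamma(1,w,0))$.

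I would then carry out the three geometric summations in the order $n$, $k$, $j$. The innermost sum $\sum_{n>k}R_{5kn}R_{6n}$ telescopes to $1$ (this is precisely the point where the exact post-crossing marginals are irrelevant, as noted after equation (2.43)); the middle sum $\sum_{k>j}R_{3jk}R_{4k}$ produces $(\chi^1-\chi)/(1-\psi)$; and the outer sum over $j$ splits into the $j=0$ term plus a geometric tail, giving $\phi_0^1-\phi_0+\varphi_0(\phi^1-\phi)/(1-\varphi)$. Multiplying the two gives $\varPsi(x,y,w)=\mathrm{D}_{pqs}(\varPhi_{\mu_1(p,q)<\mu(p,q)<\nu(p,q,s)})(x,y,w)$, and the theorem follows on applying $\mathrm{D}_{xyw}^{-1}$ at $(M_1,M,V)$. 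The main obstacle is not analytical but combinatorial: carefully accounting for which of the eleven parameters ($u_0,u,\alpha_0,\alpha,v_0,v,\beta_0,\beta,\theta_0,\theta$ and the trio $x,y,w$) act on each $X_i,Y_i,\Delta_i$ across the six regimes, so that the factors $R_{1j}$--$R_{6n}$ line up exactly with the abbreviations $\varphi,\varphi_0,\phi,\phi_0,\phi^1,\phi_0^1,\psi,\chi,\chi^1,\xi,\xi^1$; convergence of the geometric series is handled uniformly by Theorem 2.2 applied to each of $\|\varphi\|$, $\|\psi\|$, $\|\gamma(1,w,0)\|$.
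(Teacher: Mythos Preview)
Your proposal is correct and follows essentially the same route as the paper: parameterize the thresholds, apply $\mathrm{D}_{pqs}$ via Fubini and Lemma~3.1, factor the integrand over the successive index ranges into $R$-blocks, then sum the three nested geometric series using Theorem~2.2 and invert. The only cosmetic difference is that the paper packages your $R_{5kn}R_{6n}$ into a single factor $R_{5kn}$ (already written there as a product of two expectations), and your parameter count omits $h_0,h$; neither affects the argument.
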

\begin{proof}
As in the proof of Theorem 2.1, we will apply an analog of the LCD operator \eqref{auxdoperator} to
\begin{equation}
\left\{\boldsymbol{1}_{\left\{\mu_1\left(p,q\right)=j,\mu\left(p,q\right)=k,\nu\left(p,q,s\right)=n\right\}}:\left(p,q\right)\in\mathbb{N},s\in\mathbb{R}_{+}\right\}
\end{equation}
\end{proof}
The functional $\varPhi_{\mu_1<\mu<\nu}$ becomes
\begin{align}
&\varPhi_{\mu_1\left(p,q\right)<\mu\left(p,q\right)<\nu\left(p,q,s\right)}\notag
\\&=E\biggl[u_0^{N_{\mu_1\left(p,q\right)-1}}u^{N_{\mu_1\left(p.q\right)}}\alpha_0^{N_{\mu\left(p,q\right)-1}}\alpha^{N_{\mu\left(p,q\right)}}e^{-v_0W_{\mu_1\left(p,q\right)-1}-vW_{\mu_1\left(p,q\right)}}\notag
\\&\hspace{1cm}\times e^{-\beta_0W_{\mu\left(p,q\right)-1}-\beta W_{\mu\left(p,q\right)}-\theta_0\tau_{\mu_1\left(p,q\right)-1}-\theta\tau_{\mu_1\left(p,q\right)}-h_0\tau_{\mu\left(p,q\right)-1}-h\tau_{\mu\left(p,q\right)}}\notag
\\&=E\biggl[u_0^{N_{\mu_1\left(p,q\right)-1}}u^{N_{\mu_1\left(p.q\right)}}\alpha_0^{N_{\mu\left(p,q\right)-1}}\alpha^{N_{\mu\left(p.q\right)}}e^{-v_0W_{\mu_1\left(p,q\right)-1}-vW_{\mu_1\left(p,q\right)}}\notag
\\&\hspace{1cm}\times\boldsymbol{1}_{\left\{\mu_1\left(p,q\right)<\mu\left(p,q\right)<\nu\left(p,q,s\right)\right\}}\biggr]
\end{align}
with
\begin{equation}
\varPhi_{\mu_1<\mu<\nu}=\varPhi_{\mu_1\left(M_1,M\right)<\mu\left(M_1,%
M\right)<\nu\left(M_1,M,V\right)}
\end{equation}
We first arrive at
\begin{align*}
&\text{D}_{pqs}\left(\boldsymbol{1}_{\left\{\mu_1\left(p,q\right)=j,\mu\left(p,q\right)=k,\nu\left(p,q,s\right)=n\right\}}\boldsymbol{1}_{\left\{j<k<n\right\}}\right)\left(x,y,w\right)
\\&\hspace{1cm}=\left(x^{N_{j-1}}-x^{N_j}\right)\left(y^{N_{k-1}}-y^{N_k}\right)\left(e^{-wW_{n-1}}-e^{-wW_n}\right)\boldsymbol{1}_{\left\{j<k<n\right\}}
\end{align*}
as per Lemma\label{q} 3.1. Denote
\begin{equation}
\varPsi\left(x,y,w\right):=$D$_{pqs}\left(\varPhi_{\mu_1\left(p,q%
\right)<\mu\left(p,q\right)<\nu\left(p,q,s\right)}\right)\left(x,y,w\right).
\end{equation}
Analogous to the proof in Theorem 2.1, we have
\begin{align}
\varPsi\left(x,y,w\right)&=\sum\limits_{j\geq 0}\,\,\sum\limits_{k>j}\,\,\sum\limits_{n>k}E\biggl[u_0^{N_{j-1}}u^{N_j}\alpha_0^{N_{k-1}}\alpha^{N_k}e^{-v_0W_{j-1}-vW_j-\beta_0W_{k-1}-\beta W_k}\notag
\\&\hspace{2cm} \times e^{-v_0W_{j-1}-vW_j-\beta_0W_{k-1}-\beta W_k-\theta_0\tau_{j-1}-\theta\tau_j-h_0\tau_{k-1}-h\tau_k}\notag
\\&\hspace{2cm}\times\left(x^{N_{j-1}}-x^{N_j}\right)\left(y^{N_{k-1}}-y^{N_k}\right)\left(e^{-wW_{n-1}}-e^{-wW_n}\right)\biggr]\notag
\\&=\sum\limits_{j\geq 0}\,\,\sum\limits_{k>j}\,\,\sum\limits_{n>k}R_{1j}R_{2j}R_{3jk}R_{4k}R_{5kn}
\end{align}
where
\begin{align}
R_{1j}&=E\left[\left(u_0u\alpha_0\alpha xy\right)^{N_{j-1}}e^{-\left(v_0+v+\beta_0+\beta+w\right)W_{j-1}-\left(\theta_0+\theta+h_0+h\right)\tau_{j-1}}\right]\notag
\\&=\left\lbrace\begin{matrix}
1,&j=0\\
\varphi_0\varphi^{j-1},&j>0
\end{matrix}
\right.
\end{align}
\begin{align}
R_{2j}&=E\left[\left(u\alpha_0\alpha y\right)^{X_j}\left(1-x^{X_j}\right)e^{-\left(v+\beta_0+\beta+w\right)Y_j-\left(\theta+h_0+h\right)\Delta_j}\right]\notag
\\&=\left\{
\begin{matrix}
\phi_0^1-\phi_0,&j=0\\
\phi^1-\phi,&j>0
\end{matrix}
\right.
\end{align}
\begin{align}
R_{3jk}&=E\biggl[\left(\alpha_0\alpha
y\right)^{X_{j+1}+\ldots+X_{k-1}}e^{-\left(\beta_0+\beta+w\right)\left(Y_{j+1}+\ldots+Y_{k-1}\right)}e^{-\left(h_0+h\right)\left(\Delta_{j+1}+\ldots+\Delta_{k-1}\right)}\biggr]\notag
\\&=\psi^{k-1-j},\,k>j\geq 0
\end{align}
\begin{equation}
R_{4k}=E\left[\alpha^{X_k}\left(1-y^{X_k}\right)e^{-\left(\beta+w\right)Y_k-h\Delta_k}\right]=\chi^1-\chi
\end{equation}
\begin{align}
R_{5kn}&=E\left[e^{-w\left(Y_{k+1}+\ldots+Y_{n-1}\right)}\right]E\left[1-e^{-wY_n}\right]\notag
\\&=\gamma^{n-1-k}\left(1,w,0\right)\left(1-\gamma\left(1,w,0\right)\right),n>k
\end{align}
Using Theorem 2.2, $\sum\limits_{n>k}R_{5kn}=1$. The other summations converge similarly, so
\begin{equation}\label{last1}
\varPsi\left(x,y,z\right)=\biggl[\phi_0^1-\phi_0+\frac{\varphi_0}{1-\varphi}\left(\phi^1-\phi\right)\biggr]\frac{\chi^1-\chi}{1-\psi}
\end{equation}
The assertion of the theorem follows upon application of operator
D$_{xyw}^{-1}\,$to \eqref{last1}.\hfill{\qedsymbol}

\textbf{The Complete Auxiliary Functional.} The formula proved in Theorem 3.2 is itself useful. However, we are further interested in information on the process restricted to the case where the first observed crossing of $M_1$ precedes the first observed crossing of $M$ or $V$, whichever comes first: i.e. a functional restricted to $\left\{\mu_1<\min\left\{\mu,\nu\right\}\right\}$:
\begin{align}
&\varPhi_{\mu_1<\min\left\{\mu,\nu\right\}}\left(u_0,u,\alpha_0,\alpha,v_0,v,\beta_0,\beta,\theta_0,\theta,h_0,h\right)\notag
\\&=E\biggl[u_0^{N_{\mu_1-1}}u^{N_{\mu_1}}\alpha_0^{N_{\text{min}\left\{\mu,\nu\right\}-1}}\alpha^{N_{\text{min}\left\{\mu,\nu\right\}}}e^{-v_0W_{\mu_1-1}-vW_{\mu_1}-\beta_0W_{\text{min}\left\{\mu,\nu\right\}-1}-\beta W_{\text{min}\left\{\mu,\nu\right\}}}\notag
\\&\hspace{1cm}\times e^{-\theta_0\tau_{\mu_1-1}-\theta\tau_{\mu_1}-h_0\tau_{\text{min}\left\{\mu,\nu\right\}-1}-h\tau_{\text{min}\left\{\mu,\nu\right\}}}\boldsymbol{1}_{\{\mu_1<\text{min}\left\{\mu,\nu\right\}\}}\biggr]\notag
\\&=\varPhi_{\mu_1<\mu<\nu}+\varPhi_{\mu_1<\mu=\nu}\,+\,\varPhi_{\mu_1<\nu<\mu}
\end{align}

We derived the first functional in Theorem 3.2, and we find the others in the following two results whose proofs are analogous to Theorem 3.2.

\begin{prop}
The functional $\varPhi_{\mu_1<\mu=\nu}$ of the of the network damage on the trace $\sigma$-algebra $\mathcal{F}\left(\Omega\right)\cap\left\{\mu_1<\mu=\nu\right\}\,$satisfies the following formula \ under notation \eqref{auxfirstnotation}-\eqref{auxlasnotation}:
\begin{equation}
\varPhi_{\mu_1<\mu=\nu}=$D$_{xyw}^{-1}\left(\left[\phi_0^1-\phi_0+\frac{\varphi_0}{1-\varphi}\left(\phi^1-\phi\right)\right]\frac{\xi^1-\xi+\chi-\chi^1}{1-\psi}\right)\left(M_1,M,V\right).
\end{equation}
\end{prop}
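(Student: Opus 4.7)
The plan is to follow the template of Theorem 3.2 closely, with the only substantive change being the terminal factor, which must now account for the coincidence of the $M$- and $V$-crossings at one common observation index. First I would introduce the parameterized family of exit indices $\mu_1(p,q)$, $\mu(p,q)$, $\nu(p,q,s)$ and the parameterized functional
\begin{align*}
&\varPhi_{\mu_1(p,q)<\mu(p,q)=\nu(p,q,s)} \\
&=E\Bigl[u_0^{N_{\mu_1(p,q)-1}}u^{N_{\mu_1(p,q)}}\alpha_0^{N_{\mu(p,q)-1}}\alpha^{N_{\mu(p,q)}}e^{-v_0W_{\mu_1(p,q)-1}-vW_{\mu_1(p,q)}-\beta_0W_{\mu(p,q)-1}-\beta W_{\mu(p,q)}} \\
&\qquad\times e^{-\theta_0\tau_{\mu_1(p,q)-1}-\theta\tau_{\mu_1(p,q)}-h_0\tau_{\mu(p,q)-1}-h\tau_{\mu(p,q)}}\boldsymbol{1}_{\{\mu_1(p,q)<\mu(p,q)=\nu(p,q,s)\}}\Bigr],
\end{align*}
so that $\varPhi_{\mu_1<\mu=\nu}=\varPhi_{\mu_1(M_1,M)<\mu(M_1,M)=\nu(M_1,M,V)}$.

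Next I would establish the appropriate analog of Lemma 3.1. Writing the indicator $\boldsymbol{1}_{\{\mu_1(p,q)=j,\mu(p,q)=k,\nu(p,q,s)=k\}}\boldsymbol{1}_{\{j<k\}}$ as a product of elementary indicators $\boldsymbol{1}_{\{N_{j-1}\le p<N_j\}}\boldsymbol{1}_{\{N_{k-1}\le q<N_k\}}\boldsymbol{1}_{\{W_{k-1}\le s<W_k\}}$ (with $\{p<q\}$ absorbed into $\{j<k\}$) and applying $\mathrm{D}_{pqs}$ from \eqref{auxdoperator} yields
\[
\bigl(x^{N_{j-1}}-x^{N_j}\bigr)\bigl(y^{N_{k-1}}-y^{N_k}\bigr)\bigl(e^{-wW_{k-1}}-e^{-wW_k}\bigr)\boldsymbol{1}_{\{j<k\}}.
\]
Fubini together with the iid structure of $(X_n,Y_n,\Delta_n)$ then gives
\[
\varPsi(x,y,w):=\mathrm{D}_{pqs}\bigl(\varPhi_{\mu_1(p,q)<\mu(p,q)=\nu(p,q,s)}\bigr)(x,y,w)=\sum_{j\ge 0}\sum_{k>j}R_{1j}R_{2j}R_{3jk}R_{4k}^{*},
\]
where $R_{1j},R_{2j},R_{3jk}$ are exactly as in the proof of Theorem 3.2, while the new terminal factor
\[
R_{4k}^{*}=E\Bigl[\alpha^{X_k}\bigl(1-y^{X_k}\bigr)\bigl(e^{-\beta Y_k}-e^{-(\beta+w)Y_k}\bigr)e^{-h\Delta_k}\Bigr]
\]
encodes that the $M$- and $V$-crossings occur at the same step $k$. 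Expanding the product and matching to the abbreviations \eqref{auxfirstnotation}--\eqref{auxlasnotation} identifies $R_{4k}^{*}=\xi^{1}-\xi+\chi-\chi^{1}$.

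The remaining summations mirror those in Theorem 3.2: since $R_{3jk}=\psi^{k-1-j}$ and $\|\psi\|<1$ by Theorem 2.2, the inner sum equals $(\xi^{1}-\xi+\chi-\chi^{1})/(1-\psi)$ independently of $j$, and the outer sum equals $\phi_0^{1}-\phi_0+\varphi_0(\phi^{1}-\phi)/(1-\varphi)$. Applying $\mathrm{D}_{xyw}^{-1}$ at $(M_1,M,V)$ then yields the asserted formula.

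The main obstacle is bookkeeping rather than analysis. In Theorem 3.2 the $k$-th step contributed only the node-crossing factor $\chi^{1}-\chi$ while an extra index $n>k$ absorbed the $V$-crossing via a telescoping geometric sum whose total was $1$. Here the absence of this extra index forces the single step $k$ to simultaneously push $N$ past $M$ and $W$ past $V$; the corresponding joint indicator $(1-y^{X_k})(1-e^{-wY_k})$, hidden inside $R_{4k}^{*}$, expands into the four-term combination $\xi^{1}-\xi+\chi-\chi^{1}$. As noted after Theorem 2.1, the conditional law of $(X_k,Y_k,\Delta_k)$ on $\{\mu_1<\mu=\nu\}$ may differ from its unconditional counterpart, but the same argument given there shows that this discrepancy leaves the summations unchanged and can be safely suppressed.
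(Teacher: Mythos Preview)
Your proposal is correct and is precisely the argument the paper has in mind: the paper does not spell out a separate proof but states that it is ``analogous to Theorem 3.2,'' and your write-up is exactly that analog, with the triple sum over $j<k<n$ collapsing to a double sum over $j<k$ and the terminal factor $R_{4k}$ replaced by $R_{4k}^{*}=\xi^{1}-\xi+\chi-\chi^{1}$ coming from the joint indicator $(1-y^{X_k})(1-e^{-wY_k})$. One small remark: your closing caveat about the conditional law on $\{\mu_1<\mu=\nu\}$ is unnecessary here, since unlike Theorems 2.1 and 3.2 there are no post-crossing indices $n>k$ in this case, so the issue raised after Theorem 2.1 simply does not arise.
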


\begin{prop}
The functional $\,\varPhi_{\mu_1<\nu<\mu}$ of the of the network damage on the trace $\sigma$-algebra $\mathcal{F}\left(\Omega\right)\cap\left\{\mu_1<\nu<\mu\right\}$ satisfies the following formula under notation \eqref{auxfirstnotation}-\eqref{auxlasnotation}:
\begin{equation}
\varPhi_{\mu_1<\nu<\mu}=$D$_{xyw}^{-1}\left(\left[\phi_0^1-\phi_0+\frac{\varphi_0}{1-\varphi}\left(\phi^1-\phi\right)\right]\frac{\xi-\chi}{1-\psi}\right)\left(M_1,M,V\right).
\end{equation}
\end{prop}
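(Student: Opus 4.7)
The plan is to mirror the proof of Theorem~3.2, but with the roles of the $y$- and $w$-telescopes swapped to reflect the new ordering of the three observed crossings. Since $\min\{\mu,\nu\}=\nu$ on $\{\nu<\mu\}$, the marks $(\alpha_0,\alpha,\beta_0,\beta,h_0,h)$ now attach to the middle index $k=\nu$ rather than to $k=\mu$; consequently the $w$-telescope (associated with $V$) is exercised at $k$ while the $y$-telescope (associated with $M$) is exercised at the last index $n=\mu$, whereas in Theorem~3.2 the two appeared in the opposite positions. Replacing $(M_1,M,V)$ by generic levels $(p,q,s)$, setting $\mu_1(p,q),\nu(p,q,s),\mu(p,q)$ as in Lemma~3.1, and applying $\text{D}_{pqs}$ to $\varPhi_{\mu_1(p,q)<\nu(p,q,s)<\mu(p,q)}$ sets the stage.

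The first milestone is a variant of Lemma~3.1:
\[
\text{D}_{pqs}\bigl(\boldsymbol{1}_{\{\mu_1(p,q)=j,\,\nu(p,q,s)=k,\,\mu(p,q)=n\}}\boldsymbol{1}_{\{j<k<n\}}\bigr)(x,y,w)=\bigl(x^{N_{j-1}}-x^{N_j}\bigr)\bigl(e^{-wW_{k-1}}-e^{-wW_k}\bigr)\bigl(y^{N_{n-1}}-y^{N_n}\bigr)\boldsymbol{1}_{\{j<k<n\}}.
\]
Parsing the indicator as in Lemma~3.1 reduces the claim to evaluating $(1-x)\sum_p x^p$, $w\int_s e^{-ws}\,ds$, and $(1-y)\sum_q y^q$ over the appropriate ranges. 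The only subtlety is the constraint $\boldsymbol{1}_{\{p<q\}}$ induced by $M_1<M$; but $j<n$ gives $N_j\le N_{n-1}$, and since we already have $p<N_j$, it follows that $p+1\le N_{n-1}$, so the $q$-range $\max(p+1,N_{n-1})\le q\le N_n-1$ collapses to $N_{n-1}\le q\le N_n-1$ and the $y$-telescope produces $y^{N_{n-1}}-y^{N_n}$. I view this redundancy check as the main technical obstacle; once it is dispatched, the rest of the proof is bookkeeping identical in structure to Theorem~3.2.

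With the insensitivity result in hand, Fubini expands $\varPsi(x,y,w):=\text{D}_{pqs}(\varPhi_{\mu_1(p,q)<\nu(p,q,s)<\mu(p,q)})$ as $\sum_{j\ge 0}\sum_{k>j}\sum_{n>k}R_{1j}R_{2j}R_{3jk}R_{4k}R_{5kn}R_{6n}$, with factors grouping the increments over the ranges $\{0,\dots,j-1\},\{j\},\{j+1,\dots,k-1\},\{k\},\{k+1,\dots,n-1\},\{n\}$. Using \eqref{auxfirstnotation}--\eqref{auxlasnotation} and the independence of increments, the first three factors reproduce exactly those of Theorem~3.2: $R_{1j}=\varphi_0\varphi^{j-1}$ (with $R_{10}=1$), $R_{2j}=\phi^1-\phi$ (with $R_{20}=\phi_0^1-\phi_0$), and $R_{3jk}=\psi^{k-1-j}$. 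Because the $w$-telescope now lives at $k$ and the $y$-telescope at $n$, the remaining three read $R_{4k}=E[(\alpha y)^{X_k}(1-e^{-wY_k})e^{-\beta Y_k-h\Delta_k}]=\xi-\chi$, $R_{5kn}=\gamma(y,0,0)^{n-1-k}$, and $R_{6n}=1-\gamma(y,0,0)$. Theorem~2.2 gives $\|\psi\|,\|\varphi\|,\|\gamma(y,0,0)\|<1$ on the domain of $\text{D}_{pqs}$, so the geometric sums $\sum_{n>k}R_{5kn}R_{6n}=1$, $\sum_{k>j}R_{3jk}R_{4k}=(\xi-\chi)/(1-\psi)$, and $\sum_{j\ge 0}R_{1j}R_{2j}=(\phi_0^1-\phi_0)+\varphi_0(\phi^1-\phi)/(1-\varphi)$ all converge. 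Multiplying the two factors and applying $\text{D}_{xyw}^{-1}$ at $(M_1,M,V)$ yields the stated formula.
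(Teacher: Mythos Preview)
Your proposal is correct and follows precisely the route the paper intends: the paper itself does not spell out a proof of this proposition, stating only that it is ``analogous to Theorem~3.2,'' and your argument is exactly that analogue with the $y$- and $w$-telescopes permuted to match the ordering $\mu_1<\nu<\mu$. Your verification that the constraint $p<q$ is again redundant (now because $j<n$ forces $N_j\le N_{n-1}$) is the one genuinely new check beyond Lemma~3.1, and you handle it correctly; the factorisation into $R_{1j},\dots,R_{6n}$ and the identification $R_{4k}=\xi-\chi$ are all accurate.
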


Combining results 3.2-3.4, by linearity we can add them for the general functional.

\begin{thm}
The functional \label{f} $\varPhi_{\mu_1<\min\left\{\mu,\nu\right\}}$ of the of the network damage on the trace $\sigma$-algebra $\mathcal{F}\left(\Omega\right)\cap\left\{\mu_1<\min\left\{\mu,\nu\right\}\right\}$ satisfies the following formula under \eqref{auxfirstnotation}-\eqref{auxlasnotation}:
\begin{equation}
\varPhi_{\mu_1<\min\left\{\mu,\nu\right\}}=$D$_{xyw}^{-1}\left(\left[\phi_0^1-\phi_0+\frac{\varphi_0}{1-\varphi}(\phi^1-\phi)\right]\frac{\xi^1-\chi}{1-\psi}\right)\left(M_1,M,V\right).
\end{equation}
\end{thm}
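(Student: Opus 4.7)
The plan is to reduce the theorem to a direct application of the preceding three results by invoking the pointwise decomposition displayed just before Theorem 3.5:
\[
\varPhi_{\mu_1<\min\{\mu,\nu\}}=\varPhi_{\mu_1<\mu<\nu}+\varPhi_{\mu_1<\mu=\nu}+\varPhi_{\mu_1<\nu<\mu},
\]
which is nothing but the partition of $\{\mu_1<\min\{\mu,\nu\}\}$ according to the three possible orderings $\mu<\nu$, $\mu=\nu$, $\mu>\nu$ of the two ``critical'' exit indices. Since expectation is linear in the indicator and the inverse operator $\mathrm{D}_{xyw}^{-1}$ is linear (being the composition of $\mathcal{L}\mathcal{C}_y^{-1}$ and the $\mathcal{D}_x^p$, $\mathcal{D}_y^q$ inverses, which are each linear), applying Theorem 3.2, Proposition 3.3 and Proposition 3.4 in turn lets me pull $\mathrm{D}_{xyw}^{-1}$ outside of the sum.

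Concretely, I would first record that Theorem 3.2, Proposition 3.3, and Proposition 3.4 all produce expressions of the form
\[
\mathrm{D}_{xyw}^{-1}\!\left(\Bigl[\phi_0^1-\phi_0+\tfrac{\varphi_0}{1-\varphi}(\phi^1-\phi)\Bigr]\cdot\frac{K_i}{1-\psi}\right)(M_1,M,V),
\]
where the ``common prefactor'' $\phi_0^1-\phi_0+\tfrac{\varphi_0}{1-\varphi}(\phi^1-\phi)$ encodes the behaviour up to and through the $M_1$-crossing (and is identical across the three cases because $\mu_1$ is the same), while the three numerators $K_i$ are, respectively,
\[
K_1=\chi^1-\chi,\qquad K_2=\xi^1-\xi+\chi-\chi^1,\qquad K_3=\xi-\chi.
\]
Factoring the prefactor out using linearity, the proof collapses to the purely algebraic identity
\[
\frac{(\chi^1-\chi)+(\xi^1-\xi+\chi-\chi^1)+(\xi-\chi)}{1-\psi}=\frac{\xi^1-\chi}{1-\psi},
\]
which is immediate by inspection: the $\chi^1$ terms cancel, the $\xi$ terms cancel, and what remains in the numerator is $\xi^1-\chi$.

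I do not anticipate a genuine obstacle. The only point worth flagging is the justification that $\mathrm{D}_{xyw}^{-1}$ can be distributed over the three summands before the inversion is carried out; this is legitimate because $\mathrm{D}_{xyw}^{-1}=\mathcal{L}\mathcal{C}_w^{-1}\circ\mathcal{D}_y^q\circ\mathcal{D}_x^p$ is a composition of linear operators and, for each fixed triple $(p,q,s)$, the three functionals converge absolutely under the norm conditions guaranteed by Theorem 2.2 (so the sum of inverses equals the inverse of the sum). With that in place, the result of Theorem 3.5 follows at once by substituting the simplified numerator back into the common template.
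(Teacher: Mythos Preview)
Your proposal is correct and matches the paper's own argument: the paper simply states that by combining Theorem 3.2, Proposition 3.3, and Proposition 3.4 via linearity one obtains Theorem 3.5, and your write-up spells out exactly this, including the trivial numerator cancellation $(\chi^1-\chi)+(\xi^1-\xi+\chi-\chi^1)+(\xi-\chi)=\xi^1-\chi$.
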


In the next two sections, we will demonstrate analytical and numerical tractability of results derived by this approach by carrying out the inverse operators for several practical special cases. We begin with Model 1 in which the status of the network is updated upon a deterministic process, which is a reasonable assumption for many practical cases where periodic measurements are taken, as opposed to Model 2 of Section 5 in which the information about the network is collected upon random times in accordance with a specified Poisson point process (thus, with exponentially distributed inter-observation times). These two cases together represent most common types of statistical data collection.

There are other differences between the two models. In Model 1 (with ``constant observations'') we assume that the number of nodes stricken in a single attack is limited by a finite number $R$, with no further restriction upon its distribution. The associated weight of a stricken node is assumed to be gamma distributed. \label{a}In Model 2 (with ``exponential observations''), the number of nodes stricken in one attack is geometrically distributed, whereas the weight associated with each node is exponentially distributed. It seems as though the assumptions made in Model 1 are more challenging than those in Model 2, although they are rather different.

\section{\protect\centering Model 1. Constant Observations}

In this section, we will present fully explicit probabilistic results for a special case of the process with the $M_1$-auxiliary threshold (Model 1), under the following assumptions.

\begin{enumerate}
\item As previously, the attack times $\left\{t_1,t_2,...\right\}$ form a Poisson point process of rate $\lambda$.
\item Inter-observation times are constant, that is, $\Delta_k=\tau_k-\tau_{k-1}=c$ a.s., $L\left(z\right)=e^{-zc}$.
\item Nodes lost per strike have an arbitrary finite discrete
distribution, i.e.\\
$P\{n_k=j\}=p_j,\,j=1,...,R$, $g\left(z\right)=\sum\limits_{s=1}^Rp_sz^s$,$\,$ with $\boldsymbol{p}=\left(p_1,...,p_R\right)$.
\item Weight per node $w_{jk}\in\left[\text{Gamma}\left(\alpha,\xi\right)\right]$, $l\left(z\right)=\left(\frac{\xi}{z+\xi}\right)^\alpha$.
\item The initial functional $\gamma_0=1$ (i.e. zero initial damage).
\end{enumerate}

We note that deterministic observations always present a more challenging case in the context of a stochastic system than random observations. Furthermore, we restrict the total number of nodes destroyed in a single strike by a finite number $R$, which of course can be made arbitrarily large, thereby imposing literally no restriction on the distribution of perished nodes. The general gamma distribution of weight of a single node is also very general. Yet, as we will see, the main indicator of the network status will be established in a closed form functional.

To find marginal transforms we use the $\gamma$-functional of the increment of the process upon observations (as per Lemma A.1),
\begin{equation}
\gamma\left(z,v,\theta\right)=L\left[\theta+\lambda-\lambda g\left(zl\left(v\right)\right)\right]
\end{equation}
By Theorem 3.5 and due to Assumption 5,
\begin{equation}\label{constfunctional}
\varPhi_{\mu_1<\min\left\{\mu,\nu\right\}}=$D$_{xyw}^{-1}\left(\frac{\phi^1-\phi}{1-\varphi}\frac{\xi^1-\chi}{1-\psi}\right)\left(M_1,M,V\right)
\end{equation}
Furthermore,
\begin{thm}
Under Assumptions 1-5, the joint transform of the number of lost nodes, their cumulative weight, and the first passage time of the crossing of $M_1$ preceding the first crossing of $M$ or $V$ $($i.e. on the trace $\sigma$-algebra $\mathcal{F}\left(\Omega\right)\cap\left\{\mu_1<\min\left\{\mu,\nu\right\}%
\right\})$ satisfies the following formula:
\begin{align}
&E\biggl[u^{N_{\mu_1}}e^{-vW_{\mu_1}}e^{-\theta\tau_{\mu_1}}\boldsymbol{1}_{\left\{\mu_1<\min\left\{\mu,\nu\right\}\right\}}\biggr]\notag
\\&=e^{-c\left(\theta+\lambda\right)}\Biggl\{\sum\limits_{k=0}^{M_1-1}u^kF_k\left(\theta,\boldsymbol{p}\right)\sum\limits_{m=0}^{M-1-k}u^mE_m\left(\boldsymbol{p}\right)\left(\frac{\xi}{v+\xi}\right)^{\alpha\left(k+m\right)}P\left(\alpha\left(k+m\right),\left(v+\xi\right)V\right)\notag
\\&\hspace{2cm}-\,\sum\limits_{k=0}^{M_1-1}u^k\left(\frac{\xi}{\xi+v}\right)^{\alpha k}P\left(\alpha k,\left(v+\xi\right)V\right)\sum\limits_{n=0}^kE_n\left(\boldsymbol{p}\right)F_{k-n}\left(\theta,\boldsymbol{p}\right)\Biggr\},
\end{align}
\end{thm}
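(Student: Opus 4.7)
The starting point is equation \eqref{constfunctional}, which comes from Theorem 3.5 by invoking Assumption 5 ($\gamma_0 = 1$ kills the $\phi_0^1 - \phi_0$ term and reduces $\varphi_0/(1-\varphi)$ to $1/(1-\varphi)$):
$$\varPhi_{\mu_1<\min\{\mu,\nu\}} = \text{D}_{xyw}^{-1}\!\left(\frac{\phi^1-\phi}{1-\varphi}\cdot\frac{\xi^1-\chi}{1-\psi}\right)(M_1,M,V).$$
To extract the stated marginal I set $u_0 = 1$, $\alpha_0 = \alpha = 1$, $v_0 = \beta_0 = \beta = 0$, and $\theta_0 = h_0 = h = 0$ in the abbreviations \eqref{auxfirstnotation}--\eqref{auxlasnotation}. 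Two simplifications fall out immediately: $\xi^1 = \gamma(1,0,0) = L(0) = 1$ while $\psi = \chi = \gamma(y,w,0)$, so the second fraction collapses to $1$; and $\phi = \varphi = \gamma(uxy, v+w, \theta)$ while $\phi^1 = \gamma(uy, v+w, \theta)$. The computation reduces to evaluating
$$\text{D}_{xyw}^{-1}\!\left(\frac{\phi^1-\phi}{1-\phi}\right)(M_1,M,V).$$

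Next I inject Assumptions 1--4. Writing $h(z) := e^{c\lambda(g(z) - 1)}$ for the compound-Poisson PGF of ``nodes lost per observation interval'', one has $\gamma(z,v,\theta) = e^{-c\theta}h(zl(v))$. A geometric expansion in $\phi$ followed by power-series expansion of each $h(\cdot)^n$ yields
$$\frac{\phi^1 - \phi}{1-\phi} = \sum_{n\geq 0} e^{-(n+1)c\theta}\sum_{m,j\geq 0}E_m^{(n)}(\boldsymbol{p})\,E_j(\boldsymbol{p})\,u^{m+j}\,y^{m+j}\,l(v+w)^{m+j}(x^m - x^{m+j}),$$
where $E_m^{(n)}(\boldsymbol{p}) := [z^m]h(z)^n$ and $E_m := E_m^{(1)}$ (precisely the $E_m(\boldsymbol{p})$ of the theorem); the identity $h(uyl(v+w)) - h(uxyl(v+w)) = \sum_j E_j(uyl(v+w))^j(1-x^j)$ is what supplies the factor $x^m - x^{m+j}$.

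The two inverse $\mathcal{D}$-operators now act by coefficient extraction: $\mathcal{D}_x^{M_1}(x^m - x^{m+j}) = \mathbf{1}_{m \leq M_1 < m+j}$, and $\mathcal{D}_y^{M}$ imposes $m+j \leq M$. Summing the $n$-index against $e^{-nc\theta}$ collapses the $E_m^{(n)}$ into the ``observation-weighted'' series
$$F_m(\theta,\boldsymbol{p}) := \sum_{n \geq 0}e^{-nc\theta}E_m^{(n)}(\boldsymbol{p}) = [z^m]\bigl(1 - e^{-c\theta}h(z)\bigr)^{-1},$$
the $F_m(\theta,\boldsymbol{p})$ of the theorem. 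Finally $\mathcal{LC}_w^{-1}(\cdot)(V)$ is applied to each surviving $l(v+w)^r$ via the gamma-specific identity
$$\mathcal{LC}_w^{-1}\!\left[\left(\frac{\xi}{v+w+\xi}\right)^{\alpha r}\right](V) = \left(\frac{\xi}{v+\xi}\right)^{\alpha r}P(\alpha r,(v+\xi)V),$$
which follows from $\mathcal{LC}_w^{-1}(F)(V) = \mathcal{L}_w^{-1}(F/w)(V)$ and a standard gamma-density convolution. A single $e^{-c\theta}$ pulled out of the $n$-sum combines with an $e^{-c\lambda}$ absorbed from the compound-Poisson normalization to yield the overall prefactor $e^{-c(\theta+\lambda)}$.

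The main obstacle is organizing the resulting double sum to match the displayed form. The pairs $(m,j)$ surviving $\mathcal{D}_x^{M_1}\mathcal{D}_y^M$ split into an ``unconstrained product'' regime, where $m \leq M_1$ and $m+j \leq M$ act independently and yield the separable term $u^{k+m}F_k(\theta,\boldsymbol{p})E_m(\boldsymbol{p})(\cdots)$ that forms the first summand (with $k < M_1$, $k+m < M$), and a ``boundary-overcount'' regime, where the two constraints interact and generate the negative correction $\sum_n E_n(\boldsymbol{p})F_{k-n}(\theta,\boldsymbol{p})$ --- a convolution of $E$ and $F$ --- that forms the second summand. Recognizing this split (equivalently, handling $\phi^n\phi^1$ and $\phi^{n+1}$ separately and then reconciling the two expansions along the appropriate index sets) is the key combinatorial step; once it is in hand the Laplace--Carson inversion is a one-line application of the gamma formula.
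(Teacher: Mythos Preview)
Your approach is essentially the paper's: the same reduction to $\frac{\phi^1-\phi}{1-\varphi}$ (with $\frac{\xi^1-\chi}{1-\psi}=1$), the same geometric/exponential/multinomial expansion producing the $E_j$ and $F_j$ coefficients, and the same gamma identity for $\mathcal{LC}^{-1}_w$. The only organizational difference is that the paper writes $\frac{\phi^1-\phi}{1-\varphi}=\frac{\phi^1}{1-\varphi}-\frac{\varphi}{1-\varphi}=:A-B$ at the very outset and inverts each piece separately---$A$ producing your ``unconstrained product'' sum and $B$ your ``boundary-overcount'' convolution---so the combinatorial split you identify at the end is already built in from the start; note also that the paper applies $\mathcal{D}_x^{M_1-1}$ and $\mathcal{D}_y^{M-1}$ (not $\mathcal{D}_x^{M_1}$, $\mathcal{D}_y^{M}$), which is what yields the upper limits $M_1-1$ and $M-1-k$ in the displayed formula.
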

where
\begin{equation}
F_j\left(\theta,\boldsymbol{p}\right)=\sum\limits_{r=0}^{\left\lfloor\frac{R-1}{R}j\right\rfloor}\left(c\lambda\right)^{j-r}$Li$_{-\left(j-r\right)}\left(e^{-c\left(\theta+\lambda\right)}\right)\sum\limits_{\substack{\|\boldsymbol{\beta}\|_1=j \\ [R]\cdot\boldsymbol{\beta}=r+j}}\frac{p_1^{\beta_1}\cdots p_R^{\beta_R}}{\beta_1!\,\cdots\,\beta_R!},
\end{equation}
\begin{equation}
\left[R\right]=\left(1,...,R\right)\text{, }\boldsymbol{\beta}=\left(\beta_1,...,\beta_R\right)\in\mathbb{N}_0^R\,(\beta_j\leq R\text{,for each }j),
\end{equation}
\begin{tabbing}
\hspace{1cm}\=\hspace{0.15in}\=\kill
\LTab{Li$_s\left(z\right)=\sum\limits_{k=1}^\infty z^kk^{-s}$ is the polylogarithm,
which is numerically tractable for our\\
}\LTab{domain $\left\{e^{-w}:\text{Re}\left(w\right)>0\right\}$ with
$s\in\mathbb{Z}_{\leq 0}$,}
\end{tabbing}
\begin{equation}
E_j\left(\boldsymbol{p}\right)=\sum\limits_{r=0}^{\left\lfloor\frac{R-1}{R}j\right\rfloor}\left(c\lambda\right)^{j-r}\sum\limits_{\substack{\|\boldsymbol{\beta}\|_1=j \\ [R]\cdot\boldsymbol{\beta}=r+j}}\frac{p_1^{\beta_1}\cdots p_R^{\beta_R}}{\beta_1!\text{ }\cdots\,\beta_R!}
\end{equation}
\begin{tabbing}
\hspace{1cm}\=\kill
\LTab{$P\left(x,y\right)=1-\frac{\Gamma\left(x,y\right)}{\Gamma\left(x\right)}$ is the upper regularized gamma function,}
\end{tabbing}
\begin{tabbing}
\hspace{1cm}\=\kill
\LTab{$\Gamma\left(x,y\right)$ is the incomplete gamma function, and $\Gamma\left(x\right)$ is the gamma function.}
\end{tabbing}
\begin{proof}
By \eqref{constfunctional},
\begin{align*}
&E\left[u^{N_{\mu_1}}e^{-vW_{\mu_1}}e^{-\theta\tau_{\mu_1}}\boldsymbol{1}_{\left\{\mu_1<\min\left\{\mu,\nu\right\}\right\}}\right]
\\&=\varPhi_{\mu_1<\min\left\{\mu,\nu\right\}}\left(1,u,1,1,0,v,0,0,0,%
\theta,0,0\right)=\text{D}_{xyw}^{-1}\left(\frac{\phi^1-\phi}{1-\varphi}\frac{%
\xi^1-\chi}{1-\psi}\right)\left(M_1,M,V\right)
\end{align*}
where
\begin{align}
&\frac{\phi^1-\phi}{1-\varphi}=\frac{\gamma\left(uy,v+w,\theta\right)-%
\gamma\left(uxy,v+w,\theta\right)}{1-\gamma\left(uxy,v+w,\theta\right)}=A-B,
\\&A:=\frac{e^{-c\theta_y}}{1-e^{-c\theta_{xy}}},\label{constproof1}
\\&B:=\frac{e^{-c\theta_{xy}}}{1-e^{-c\theta_{xy}}},\label{constproof2}
\\&\frac{\xi^1-\chi}{1-\psi}=\frac{\gamma\left(1,0,0\right)-\gamma\left(y,w,0\right)}{1-\gamma\left(y,w,0\right)}=\frac{1-\gamma\left(y,w,0\right)}{1-\gamma\left(y,w,0\right)}=1,
\\&\theta_y=\theta+\lambda-\lambda\sum\limits_{s=1}^Rp_s\left(uly\right)^s,\,\theta_{xy}=\theta+\lambda-\lambda\sum\limits_{s=1}^Rp_s\left(ulyx\right)^s,\,l=l\left(v+w\right)
\end{align}
Re$\left(c\theta_{xy}\right)>0$, so $\left|e^{-c\theta_{xy}}\right|<1$, so we can find
\begin{align*}
\frac{1}{1-e^{-c\theta_{xy}}}&=\sum\limits_{i\geq 0}e^{-c\theta_{xy}i}=\sum\limits_{i\geq 0}e^{-c\left(\theta+\lambda\right)i}\sum\limits_{j\geq 0}\frac{\left(c\lambda i\right)^j}{j!}\left(\sum\limits_{s=1}^Rp_s\left(ulyx\right)^s\right)^j
\\&=\sum\limits_{j\geq 0}\frac{\left(c\lambda\right)^j}{j!}\left[\sum\limits_{i\geq 0}i^je^{-c\left(\theta+\lambda\right)i}\right]\left(\sum\limits_{k=1}^Rp_k\left(ulyx\right)^k\right)^j.
\end{align*}
By the multinomial theorem,
\begin{align*}
\hspace{1.7cm}=\sum\limits_{j\geq 0}\frac{\left(c\lambda u\right)^j}{j!}\left[\sum\limits_{i\geq
0}i^je^{-c\left(\theta+\lambda\right)i}\right]\,\,\,\,\,\sum\limits_{k=0}^{j%
\left(R-1\right)}u^kC_{jk}\left(\boldsymbol{p}\right)\left(lyx\right)^{k+j},
\end{align*}
where $C_{jk}\left(\boldsymbol{p}\right)=\sum\limits_{\substack{\left\|\beta\right\|_1=j\\\left[R\right]\cdot\boldsymbol{\beta}=k+j}}\binom{j}{\beta_1\,\cdots\,\beta_R} p_1^{\beta_1}\cdots p_R^{\beta_R}$\\
Returning to \eqref{constproof1},
\begin{equation*}
A=e^{-c\theta_y}\sum\limits_{j\geq 0}u^jA_j\left(\theta\right)\,\,\,\,\,\sum\limits_{k=0}^{j\left(R-1\right)}u^kC_{jk}\left(\boldsymbol{p}\right)\left(lyx\right)^{j+k},
\end{equation*}
where $A_k\left(\theta\right)=\frac{\left(c\lambda\right)^k}{k!}\sum\limits_{i\geq 0}i^ke^{-c\left(\theta+\lambda\right)i}$.
\begin{equation*}
A=e^{-c\theta_y}\sum\limits_{j\geq 0}\sum\limits_{k=0}^{j\left(R-1\right)}u^jA_j\left(\theta\right)u^kC_{jk}\left(\boldsymbol{p}\right)\left(lyx\right)^{j+k}.
\end{equation*}
For convenience, we denote this
\begin{equation}\label{constproof3}
A=e^{-c\theta_y}\sum\limits_{j\geq
0}\sum\limits_{k=0}^{j\left(R-1\right)}D_{jk}\left(lyx\right)^{j+k}=e^{-c\theta_y}\sum\limits_{j\geq 0}\left[\sum\limits_{\substack{m+r=j \\ r\leq m(R-1)}}D_{mr}\right]\left(lyx\right)^j.
\end{equation}
Note that
\begin{equation}\label{constproof4}
\sum\limits_{\substack{m+r=j \\ r\leq m(R-1)}}D_{mr}=\sum\limits_{r\leq\left(j-r\right)\left(R-1\right)}D_{j-r,r}=\,\,\sum\limits_{r=0}^{\left\lfloor\frac{R-1}{R}j\right\rfloor}D_{j-r,r}
\end{equation}
Combining \eqref{constproof3}-\eqref{constproof4}, we find
\begin{align}\label{constproof5}
A&=e^{-c\theta_y}\sum\limits_{j\geq 0}u^j\left[\,\,\sum\limits_{r=0}^{\left\lfloor\frac{R-1}{R}j\right\rfloor}A_{j-r}\left(\theta\right)C_{j-r,r}\left(\boldsymbol{p}\right)\right]\left(lyx\right)^j\notag
\\&=e^{-c\theta_y}\sum\limits_{j\geq
0}u^jF_j\left(\theta,\boldsymbol{p}\right)\left(lyx\right)^j
\end{align}
Applying $\mathcal{D}^{M_1-1}_x$ to \eqref{constproof5} and using properties (i, iv) of the inverse operator given in Appendix B, we get
\begin{equation}\label{constproof6}
e^{-c\theta_y}\sum\limits_{j=0}^{M_1-1}u^jF_j\left(\theta,%
\boldsymbol{p}\right)\left(ly\right)^j
\end{equation}
Applying$\mathcal{D}^{M-1}_y$ to \eqref{constproof6} with properties (\textit{i}, \textit{iii}, \textit{iv}), we have (showing only main transitions),
\begin{align}
&\sum\limits_{j=0}^{M_1-1}u^jF_j\left(\theta,\boldsymbol{p}\right)l^j%
\mathcal{D}^{M-1}_y\left(y^je^{-c\theta_y}\right)\notag
\\&=e^{-c\left(\theta+\lambda\right)}\sum\limits_{j=0}^{M_1-1}u^jF_j\left(\theta,\boldsymbol{p}\right)l^j\mathcal{D}^{M-1-j}_y\left(e^{c\lambda\sum\limits_{s=1}^Rp_s\left(uly\right)^s}\right)\notag
\\&=e^{-c\left(\theta+\lambda\right)}\sum\limits_{j=0}^{M_1-1}u^jF_j\left(\theta,\boldsymbol{p}\right)l^j\mathcal{D}^{M-1-j}_y\left(\sum\limits_{k\geq 0}u^kE_k\left(\boldsymbol{p}\right)\left(ly\right)^k\right)\notag
\\&=e^{-c\left(\theta+\lambda\right)}\sum\limits_{j=0}^{M_1-1}u^jF_j\left(\theta,\boldsymbol{p}\right)\sum\limits_{k=0}^{M-1-j}u^kE_k\left(\boldsymbol{p}\right)l^{k+j}\label{constproof7}
\end{align}
Next, applying $\mathcal{L}_w^{-1}\left(\frac{1}{w}\cdot\right)\left(V\right)$ to \eqref{constproof7},
\begin{equation}\label{constproof8}
=e^{-c\left(\theta+\lambda\right)}\,\,\,\sum\limits_{j=0}^{M_1-1}u^jF_j\left(%
\theta,\boldsymbol{p}\right)\,\,\,\sum\limits_{k=0}^{M-1-j}u^kE_k\left(%
\boldsymbol{p}\right)\left(\frac{\xi}{v+\xi}\right)^{\alpha\left(j+k\right)}P%
\left(\alpha\left(j+k\right),\left(v+\xi\right)V\right).
\end{equation}
Returning to \eqref{constproof2}, we apply the same procedure to $B$,
\begin{align}\label{constproof9}
B&=e^{-c\theta_{xy}}\sum\limits_{j\geq 0}u^jF_j\left(\theta,\boldsymbol{p}\right)\left(lyx\right)^j\notag
\\&=e^{-c\left(\theta+\lambda\right)}\left(\sum\limits_{k\geq 0}u^kE_k\left(\boldsymbol{p}\right)\left(lyx\right)^k\,\right)\left(\sum\limits_{j\geq 0}u^jF_j\left(\theta,\boldsymbol{p}\right)\left(lyx\right)^j\right)\notag
\\&=e^{-c\left(\theta+\lambda\right)}\sum\limits_{n\geq 0}u^n\left[\sum\limits_{k=0}^nE_k\left(\boldsymbol{p}\right)F_{n-k}\left(\theta,\boldsymbol{p}\right)\right]\left(lyx\right)^n.
\end{align}
Applying $\mathcal{D}^{M_1-1}_x$ and $\mathcal{D}^{M-1}_y$ to \eqref{constproof9}, we get
\begin{equation}\label{constproof10}
e^{-c\left(\theta+\lambda\right)}\,\,\,\sum\limits_{n=0}^{M_1-1}u^n\left[\sum\limits_{k=0}^nE_k\left(\boldsymbol{p}\right)F_{n-k}\left(\theta,\boldsymbol{p}\right)\right]l^n.
\end{equation}
Applying $\mathcal{L}_w^{-1}\left(\frac{1}{w}\cdot\right)\left(V\right)$ to \eqref{constproof10} yields
\begin{equation}\label{constproof11}
e^{-c\left(\theta+\lambda\right)}\sum\limits_{n=0}^{M_1-1}u^n\left[\sum\limits_{k=0}^nE_k\left(\boldsymbol{p}\right)F_{n-k}\left(\theta,\boldsymbol{p}\right)\right]\left(\frac{\xi}{\xi+v}\right)^{\alpha n}P\left(\alpha n,\left(v+\xi\right)V\right)
\end{equation}
Subtracting \eqref{constproof11} from \eqref{constproof8} yields the desired result.
\end{proof}

\begin{rem}
Through simulation of the process, we were able to produce some verification of the results via numerical examples. For two sets of parameters of the process with $R=3$ $\left(\lambda,[p_1,p_2,p_3],[\alpha,\xi],c,M_1,M,V\right)$, we generated 100 realizations of the process for each of a range of $M_1$ values and calculated the empirical probabilities $P\left\{\mu_1<\min\left\{\mu,\nu\right\}\right\}$ for each:
\end{rem}

\begin{center}
\includegraphics[scale=0.6]{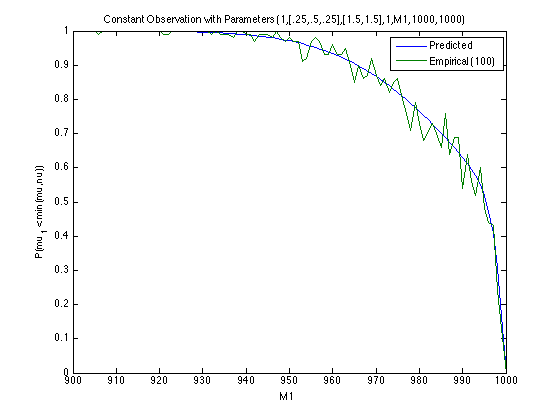}
\\
\includegraphics[scale=0.6]{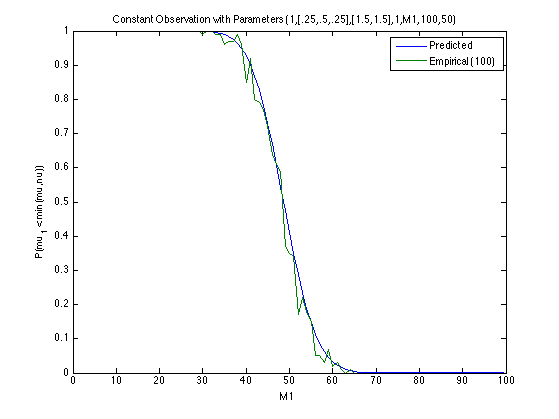}
\end{center}

\section{\protect\centering Model 2. Exponential Observations}

In this section, we will offer fully explicit formulas for another special case of the process with the $M_1$-auxiliary threshold (Model 2), under the following assumptions.

\begin{enumerate}
\item The attack times $\left\{t_1,t_2,...\right\}$ form a Poisson point process of rate $\lambda$
\item Inter-observation times $\Delta_k=\tau_k-\tau_{k-1}\in\left[\text{Exponential(}\mu\text{)}\right]$, so $L\left(z\right)=\frac{\mu}{\mu+z}$.
\item Nodes losses per strike $n_k\in\left[\text{Geometric(}a\text{)}\right]\,$(with $b=1-a$), so $g\left(z\right)=\frac{az}{1-bz}$
\item Weight lost per node $w_{jk}\in\left[\text{Exponential(}\xi\text{)}\right]$, so $l\left(z\right)=\frac{\xi}{\xi+z}$
\item The initial functional $\gamma_0=1$ (i.e. zero initial damage)
\end{enumerate}

\label{e}Using these, we find explicit marginal transforms. As in Section 4, we use the functional of the increment of the process upon observations (as shown in Lemma A.1),
\begin{equation}
\gamma\left(z,v,\theta\right)=L\left[\theta+\lambda-\lambda g\left(zl\left(v\right)\right)\right].
\end{equation}

By Theorem 3.5 and Assumption 5,
\begin{equation}\label{expfunctional}
\varPhi_{\mu_1<\min\left\{\mu,\nu\right\}}\left(u_0,u,\alpha_0,\alpha,v_0,v,\beta_0,\beta,\theta_0,\theta,h_0,h\right)=$D$_{xyw}^{-1}\left(\frac{\phi^1-\phi}{1-\varphi}\frac{\xi^1-\chi}{1-\psi}\right)\left(M_1,M,V\right).
\end{equation}

\subsection{The Joint Transform upon $\boldsymbol{\tau}_{\boldsymbol{\mu}_{\boldsymbol{1}}}$}

\noindent We will find the joint transform of the process upon the observed $M_1$-crossing and deduce some useful results from it, i.e.
\begin{equation}
\varPhi_{\mu_1<\min\left\{\mu,\nu\right\}}\left(1,u,1,1,0,v,0,0,0,\theta,0,0\right)=E\biggl[u^{N_{\mu_1}}e^{-vW_{\mu_1}}e^{-\theta\tau_{\mu_1}}\boldsymbol{1}_{\left\{\mu_1<\min\left\{\mu,\nu\right\}\right\}}\biggr]
\end{equation}

We introduce some notation convenient for the forthcoming results
\begin{equation}\label{expfirstnotation}
k_j\left(v\right)=e^{-\left(v+\xi\right)V}\sum\limits_{i=0}^j\frac{\left[\left(v+\xi\right)V\right]^i}{i!}=1-P\left(j+1,\left(v+\xi\right)V\right)\text{, }j\geq 0,
\end{equation}
\begin{align}
r_k^j\left(u,v,\theta\right)&=e^{-\left(v+\xi\right)V}\sum\limits_{i=k}^j\frac{\left[\xi d\left(\theta\right)uV\right]^i}{i!}\notag
\\&=e^{-\left(v+\xi\left(1-d\left(\theta\right)u\right)\right)V}\left[P\left(k,\xi d\left(\theta\right)uV\right)-P\left(j+1,\xi d\left(\theta\right)uV\right)\right]
\end{align}
\begin{equation}\label{explastnotation}
c\left(\theta\right)=\frac{\lambda+b\theta}{\lambda+\theta},\,d\left(\theta\right)=\frac{\lambda+b\left(\mu+\theta%
\right)}{\lambda+\mu+\theta}
\end{equation}

We will also use the convention $\sum\limits_{j=0}^{-1}=1$.

\begin{prop}
Under Assumptions 1-5 and notation \eqref{expfirstnotation}-\eqref{explastnotation}, for $M_1>M_1+1$,
\begin{align}
&E\left[u^{N_{\mu_1}}e^{-vW_{\mu_1}}e^{-\theta\tau_{\mu_1}}\boldsymbol{1}_{\left\{\mu_1<\min\left\{\mu,\nu\right\}\right\}}\right]\notag
\\&\hspace{.3cm}=\frac{a\lambda\mu\left[c\left(\theta\right)\right]^{M_1-1}\left(u\xi\right)^{M_1}}{\left(\mu+\theta+\lambda\right)\left(\theta+\lambda\right)\left(v+\xi\left(1-d\left(\theta\right)\right)\right)}\notag
\\&\hspace{.3cm}\times\left[\frac{P\left(M_1-1,\left(v+\xi\right)V\right)}{\left(v+\xi\right)^{M_1-1}}-\frac{r_{M_1-1}^{M-2}\left(u,v,\theta\right)}{\left(d\left(\theta\right)u\xi\right)^{M_1-1}}-\frac{\left(d\left(\theta\right)u\xi\right)^{M-M_1}P\left(M-1,\left(v+\xi\right)V\right)}{\left(v+\xi\right)^{M-1}}\right]
\end{align}
For $M=M_1+1$,
\begin{equation}
E\left[u^{N_{\mu_1}}e^{-vW_{\mu_1}}e^{-\theta\tau_{\mu_1}}\boldsymbol{1}_{\left\{\mu_1<\min\left\{\mu,\nu\right\}\right\}}\right]=\frac{a\lambda\mu\left[c\left(\theta\right)\right]^{M_1-1}\left(u\xi\right)^{M_1}}{\left(\mu+\theta+\lambda\right)\left(\theta+\lambda\right)\left(v+\xi\right)^{M_1}}P\left(M_1,\left(v+\xi\right)V\right)
\end{equation}
\end{prop}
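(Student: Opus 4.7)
The plan is to specialize Theorem 3.5 to the parameters of the target transform and then explicitly carry out the inverse compound operator $\mathrm{D}_{xyw}^{-1}$ using the rational closed form of $\gamma$ available under Assumptions 1--5. First I would substitute $u_0=\alpha_0=\alpha=1$ and $v_0=\beta_0=\beta=\theta_0=h_0=h=0$ into the abbreviations \eqref{auxfirstnotation}--\eqref{auxlasnotation}. With these choices the notation collapses dramatically: $\phi=\varphi$, $\chi=\psi$, and $\xi^1=\gamma(1,0,0)=1$, so that $\frac{\xi^1-\chi}{1-\psi}=1$. Combined with $\gamma_0=1$, the formula of Theorem 3.5 reduces to
\[
E\bigl[u^{N_{\mu_1}}e^{-vW_{\mu_1}-\theta\tau_{\mu_1}}\mathbf{1}_{\{\mu_1<\min\{\mu,\nu\}\}}\bigr]=\mathrm{D}_{xyw}^{-1}\!\left(\frac{\phi^1-\phi}{1-\varphi}\right)(M_1,M,V).
\]

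Second, I would compute $\gamma(z,v,\theta)=L[\theta+\lambda-\lambda g(zl(v))]$ in closed form. The algebraic identities $\lambda+b\theta=(\lambda+\theta)c(\theta)$ and $\lambda+b(\mu+\theta)=(\lambda+\mu+\theta)d(\theta)$---which are precisely what motivate the definitions \eqref{explastnotation}---yield
\[
\gamma(z,v,\theta)=\frac{\mu(\xi+v-bz\xi)}{(\mu+\theta+\lambda)(\xi+v-d(\theta)z\xi)}.
\]
Writing $A=\xi+v+w$, a direct expansion of $\phi^1-\phi$ and $1-\varphi$ shows that the $A^2$- and $bd$-terms cancel and the combination $d(\theta)-b=a\lambda/(\lambda+\mu+\theta)$ emerges, producing the key factorization
\[
\frac{\phi^1-\phi}{1-\varphi}=\frac{a\lambda\mu\,uy\xi(1-x)A}{(\mu+\theta+\lambda)(\theta+\lambda)(A-d(\theta)uy\xi)(A-c(\theta)uxy\xi)}.
\]
The $(1-x)$ factor is exactly what is needed to cooperate with the partial-sum action of $\mathcal{D}_x^{M_1-1}$.

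Third, I would apply the three commuting inverse operators in turn. Only $A-c(\theta)uxy\xi$ depends on $x$, so $\mathcal{D}_x^{M_1-1}$ acts on $\frac{(1-x)A}{A-c(\theta)uxy\xi}$; expanding this as a geometric series in $x$ yields coefficients $1$ and, for $k\ge 1$, $\frac{(c(\theta)uy\xi)^k}{A^k}-\frac{(c(\theta)uy\xi)^{k-1}}{A^{k-1}}$, whose partial sum through $x^{M_1-1}$ telescopes to $(c(\theta)uy\xi/A)^{M_1-1}$. This leaves $\frac{a\lambda\mu c(\theta)^{M_1-1}(u\xi)^{M_1}y^{M_1}}{(\mu+\theta+\lambda)(\theta+\lambda)A^{M_1-1}(A-d(\theta)uy\xi)}$. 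For the next step I would Laplace-invert in $w$: writing $\tilde a=v+\xi(1-d(\theta)uy)$, a convolution based on $\mathcal{L}_w^{-1}[(a+w)^{-n}](t)=t^{n-1}e^{-at}/(n-1)!$ gives
\[
\mathcal{L}_w^{-1}\!\left[\frac{1}{A^{M_1-1}(A-d(\theta)uy\xi)}\right]\!(t)=\frac{e^{-\tilde a t}\,P(M_1-1,d(\theta)uy\xi\,t)}{(d(\theta)uy\xi)^{M_1-1}},
\]
and integration over $[0,V]$ (the Laplace-Carson inverse step) produces, after expanding $P(M_1-1,\cdot)$ as a finite sum, a combination of $\frac{1-e^{-\tilde aV}}{\tilde a}$ with a finite polynomial in $y$ formed from the $\frac{P(j,(v+\xi)V)}{(v+\xi)^j}$ tail terms.

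The main obstacle is the final $\mathcal{D}_y^{M-1}$, which must simultaneously absorb the explicit $y^{M_1}$ prefactor and the linear-in-$y$ denominator $\tilde a$. For the tail-polynomial piece of degree at most $M_1-1\le M-1$, the partial-sum operator $\mathcal{D}_y^{M-1}$ reduces to evaluation at $y=1$. For the $\frac{y(1-e^{-\tilde aV})}{\tilde a}$ piece I would use the integral representation $\frac{1-e^{-\tilde aV}}{\tilde a}=\int_0^V e^{-(v+\xi)s}e^{d(\theta)u\xi s\,y}\,ds$, expand the inner exponential in powers of $y$, swap the sum and integral, and apply $\mathcal{D}_y^{M-1}$ termwise to obtain $\sum_{n=1}^{M-1}\frac{(d(\theta)u\xi)^{n-1}P(n,(v+\xi)V)}{(v+\xi)^n}$. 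Combining this with the polynomial contribution and reorganizing through the incomplete-gamma identity $P(n,x)-P(n+1,x)=\frac{x^{n}e^{-x}}{n!}$ (which is exactly the identity underlying the definition of $r_k^j$) recovers the stated three-term bracket: the boundary terms $\frac{P(M_1-1,(v+\xi)V)}{(v+\xi)^{M_1-1}}$ and $\frac{(d(\theta)u\xi)^{M-M_1}P(M-1,(v+\xi)V)}{(v+\xi)^{M-1}}$ come from the endpoints of the partial sums, the $r_{M_1-1}^{M-2}$ combination from the interior, and the common denominator $v+\xi(1-d(\theta))$ from the closed-form geometric summation. The degenerate case $M=M_1+1$ is immediate: the relevant partial sums collapse to a single term and only the contribution $\frac{a\lambda\mu c(\theta)^{M_1-1}(u\xi)^{M_1}P(M_1,(v+\xi)V)}{(\mu+\theta+\lambda)(\theta+\lambda)(v+\xi)^{M_1}}$ survives.
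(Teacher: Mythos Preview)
Your proposal is correct, and its opening steps coincide with the paper's proof: you specialize Theorem~3.5 so that $\frac{\xi^1-\chi}{1-\psi}=1$, compute the rational closed form of $\gamma$, and apply $\mathcal{D}_x^{M_1-1}$ to obtain
\[
\frac{a\lambda\mu\,c(\theta)^{M_1-1}(u\xi)^{M_1}\,y^{M_1}}{(\mu+\theta+\lambda)(\theta+\lambda)\,A^{M_1-1}(A-d(\theta)u y\xi)},\qquad A=v+\xi+w,
\]
exactly as in the paper. From this point the two arguments diverge in the \emph{order} of the remaining inversions. The paper applies $\mathcal{D}_y^{M-1}$ \emph{before} $\mathcal{LC}_w^{-1}$: the properties of Appendix~B turn $\dfrac{y^{M_1}}{1-d(\theta)u l y}$ into the finite geometric sum $\dfrac{1-(d(\theta)u l)^{\,M-M_1}}{1-d(\theta)u l}$, which is a rational function of $w$ alone; a single partial-fraction decomposition of $\dfrac{l^k}{1-d(\theta)u l}$ then makes the Laplace--Carson inverse land directly on the three-term bracket of the statement. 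You instead invert in $w$ first and in $y$ last. This forces you to carry the $y$-dependent pole $\tilde a=v+\xi(1-d(\theta)u y)$ through an integral representation, expand $e^{d(\theta)u\xi s\,y}$ as a power series, apply $\mathcal{D}_y^{M-1}$ termwise, and finally reassemble the answer via the telescoping identity $P(n,x)-P(n+1,x)=e^{-x}x^{n}/n!$ to recover the stated form. Both routes are valid and yield the same expression (your intermediate clean sum $\sum_{n=M_1}^{M-1}(d(\theta)u\xi)^{\,n-M_1}P(n,(v+\xi)V)/(v+\xi)^n$ is indeed equal to the paper's bracket divided by $v+\xi(1-d(\theta)u)$, as an Abel-summation check confirms), and your order makes the degenerate case $M=M_1+1$ especially transparent; the paper's order, however, sidesteps the integral trick and the subsequent resummation, arriving at the stated three-term form with noticeably less bookkeeping.
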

\begin{proof}
By \eqref{expfunctional},
\begin{align*}
&E\biggl[u^{N_{\mu_1}}e^{-vW_{\mu_1}}e^{-\theta\tau_{\mu_1}}\boldsymbol{1}_{\left\{\mu_1<\min\left\{\mu,\nu\right\}\right\}}\biggr]
\\&\hspace{0.5cm}=\varPhi_{\mu_1<\min\left\{\mu,\nu\right\}}\left(1,u,1,1,0,v,0,0,0,\theta,0,0\right)=\text{D}_{xyw}^{-1}\left(\frac{\phi^1-\phi}{1-\varphi}\frac{\xi^1-\chi}{1-\psi}\right)\left(M_1,M,V\right).
\end{align*}
After inputting parameters and using \eqref{expfirstnotation}-\eqref{explastnotation}, we have
\begin{equation}
\frac{\phi^1-\phi}{1-\varphi}=\frac{\gamma\left(uy,v+w,\theta\right)-\gamma\left(uxy,v+w,\theta\right)}{1-\gamma\left(uxy,v+w,\theta\right)}=\frac{\mu}{\mu+\theta_y}\frac{\theta_{xy}-\theta_y}{\theta_{xy}},\hspace{.5cm}\frac{\xi^1-\chi}{1-\psi}=1,
\end{equation}
where
\begin{equation}
\theta_y=\theta+\lambda-\lambda g\left(uly\right),\hspace{.5cm}\theta_{xy}=\theta+\lambda-\lambda g\left(ulyx\right),\hspace{.5cm} l=l\left(v+w\right)
\end{equation}
Then $\mathcal{D}_x^{M_1-1}\left(\frac{\mu}{\mu+\theta_y}\frac{\theta_{xy}-\theta_y}{\theta_{xy}}\right)=\frac{\mu}{\mu+\theta_y}\mathcal{D}_x^{M_1-1}\left(\frac{\theta_{xy}-\theta_y}{\theta_{xy}}\right)$, where
\begin{equation}\label{expproof1}
\frac{\theta_{xy}-\theta_y}{\theta_{xy}}=\frac{\theta+\lambda-\lambda\frac{aulyx}{1-bulyx}-\theta-\lambda+\lambda\frac{auly}{1-buly}}{\theta+\lambda-\lambda\frac{aulyx}{1-bulyx}}=\frac{\lambda auly}{\left(\theta+\lambda\right)\left(1-buly\right)}\frac{1-x}{1-culyx}.
\end{equation}
For convenience, denote $c=c\left(\theta\right),\,d=d\left(\theta\right)$. Now we apply $\mathcal{D}_x^{M_1-1}$ to \eqref{expproof1}, using properties (i, iii, v) from Appendix B, and retain the multiplier to the left, resulting in
\begin{equation}\label{expproof2}
\frac{\mu}{\mu+\theta_y}\frac{\lambda a\left(uly\right)^{M_1}c^{M_1-1}}{\left(\theta+\lambda\right)\left(1-buly\right)}
\end{equation}
Next, we need to apply $\mathcal{D}_y^{M-1}$ to \eqref{expproof2}, so we first expand the $\frac{\mu}{\mu+\theta_y}$ term,
\begin{equation}\label{expproof3}
\frac{\mu}{\mu+\theta_y}\frac{\lambda a\left(uly\right)^{M_1}c^{M_1-1}}{\left(\theta+\lambda\right)\left(1-buly\right)}=\frac{a\mu\lambda\left(ul\right)^{M_1}c^{M_1-1}}{\left(\theta+\lambda\right)\left(\mu+\theta+\lambda\right)}\frac{y^{M_1}}{1-duly}
\end{equation}
This allows us to apply $\mathcal{D}_y^{M-1}$ to \eqref{expproof2}, via properties (i, iii, iv) of the inverse operator given in Appendix B:
\begin{equation}\label{expproof4}
\mathcal{D}_y^{M-1}\left(\frac{y^{M_1}}{1-duly}\right)=\mathcal{D}_y^{M-M_1-1}\left(\frac{1}{1-duly}\right)=\frac{1-\left(du\right)^{M-M_1}l^{M-M_1}}{1-dul}
\end{equation}
Next, we need to apply $\mathcal{L}\mathcal{C}_w^{-1}\left(\cdot\right)\left(V\right)$ to the results of \eqref{expproof3}-\eqref{expproof4}, for $M>M_1+1$:
\begin{equation}
\frac{a\mu\lambda c^{M_1-1}u^{M_1}}{\left(\theta+\lambda\right)\left(\mu+\theta+\lambda\right)}\mathcal{L}\mathcal{C}_w^{-1}\left(l^{M_1}\frac{1-\left(du\right)^{M-M_1}l^{M-M_1}}{1-dul}\right)\left(V\right).
\end{equation}
A more convenient form is as follows:
\begin{align*}
l^{M_1}\frac{1-\left(du\right)^{M-M_1}l^{M-M_1}}{1-dul}&=\left(\frac{\xi}{v+w+\xi}\right)^{M_1}\frac{1-\left(du\right)^{M-M_1}\left(\frac{\xi}{v+w+\xi}\right)^{M-M_1}}{1-du\left(\frac{\xi}{v+w+\xi}\right)}
\\&=\xi^{M_1}\left[A-\left(du\xi\right)^{M-M_1}B\right].
\end{align*}
Next, we will apply $\mathcal{L}\mathcal{C}^{-1}_w\left(\cdot\right)=\mathcal{L}^{-1}_w\left(\frac{1}{w}\cdot\right)$ to both $A$ and $B$. By partial fractions,
\begin{align*}
\mathcal{L}\mathcal{C}^{-1}_w\left(A\right)\left(V\right)=\frac{1}{v+\xi\left(1-du\right)}\Biggl[&\mathcal{L}^{-1}_w\left(\frac{1}{w\left(w+v+\xi\right)^{M_1-1}}\right)\left(V\right)
\\&-\mathcal{L}^{-1}_w\left(\frac{1}{\left(w+v+\xi\left(1-du\right)\right)\left(w+v+\xi\right)^{M_1-1}}\right)\left(V\right)\Biggr]
\end{align*}
\begin{equation}\label{expproof5}
=\frac{1}{v+\xi\left(1-du\right)}\Biggl[\frac{1-k_{M_1-2}\left(v\right)}{\left(v+\xi\right)^{M_1-1}}-\frac{e^{-\left(v+\xi\left(1-du\right)\right)V}}{\left(du\xi\right)^{M_1-1}}\left(1-e^{-\xi duV}\sum\limits_{j=0}^{M_1-2}\frac{\left(\xi duV\right)^j}{j!}\right)\Biggr].
\end{equation}
The inverse transform of $B$ is similar, where $M_1\,$is replaced by $M$. Combining this inverse with \eqref{expproof5} yields the statement of the theorem for $M>M_1+1$.

For $M=M_1+1$, the inverse Laplace-Carson transform reduces to\\

\hspace{.5cm}$\mathcal{L}\mathcal{C}_w^{-1}\left(l^{M_1}\right)\left(V\right)=\mathcal{L}^{-1}_w\left(\frac{\xi^{M_1-1}}{w\left(w+v+\xi\right)^{M_1-1}}\right)\left(V\right)=\left(\frac{\xi}{v+\xi}\right)^{M_1-1}\left[1-k_{M_1-2}\left(v\right)\right]$
\end{proof}

The first fact we deduce from the joint transform above is the probability that the observed $M_1$ crossing occurs before crossing of an observed $M$ or $V$.

\begin{cor}
Under Assumptions 1-5 and notation \eqref{expfirstnotation}-\eqref{explastnotation} for $M>M_1+1$,
\begin{align}
&\varPhi_{\mu_1<\min\left\{\mu,\nu\right\}}\left(1,1,1,1,0,0,0,0,0,0,0,0\right)=E\left[\boldsymbol{1}_{\left\{\mu_1<\min\left\{\mu,\nu\right\}\right\}}\right]=P\left\{\mu_1<\min\left\{\mu,\nu\right\}\right\}\notag
\\&=P\left(M_1-1,\xi V\right)-\frac{r_{M_1-1}^{M-2}\left(1,0,0\right)}{d\left(0\right)^{M_1-1}}-d\left(0\right)^{M-M_1}P\left(M-1,\xi V\right).
\end{align}
\end{cor}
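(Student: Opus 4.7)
The plan is to read this corollary off directly as the specialization of Proposition 5.1 at $u=1$, $v=0$, $\theta=0$, since by definition
\[
E\bigl[\boldsymbol{1}_{\{\mu_1<\min\{\mu,\nu\}\}}\bigr]=P\{\mu_1<\min\{\mu,\nu\}\}=\varPhi_{\mu_1<\min\{\mu,\nu\}}(1,1,1,1,0,0,0,0,0,0,0,0).
\]
No new inversion work is needed, since all of the operator D$^{-1}_{xyw}$ action has already been executed in the proof of Proposition 5.1; only careful substitution and cancellation of parameters remain.

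First I would evaluate the auxiliary quantities at these trivial arguments. From \eqref{explastnotation} one sees immediately $c(0)=(\lambda+0)/(\lambda+0)=1$ and $d(0)=(\lambda+b\mu)/(\lambda+\mu)$. The crucial identity, which avoids an apparent pole, is
\[
v+\xi\bigl(1-d(\theta)u\bigr)\bigm|_{u=1,\,v=0,\,\theta=0}=\xi\bigl(1-d(0)\bigr)=\frac{a\mu\xi}{\lambda+\mu},
\]
where the final equality uses $1-b=a$. This quantity is nonzero, so the division is well-defined in the limit.

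Next I would collapse the global prefactor of Proposition 5.1. At $u=1$, $v=0$, $\theta=0$ the factor $[c(\theta)]^{M_1-1}$ becomes $1$, $(u\xi)^{M_1}$ becomes $\xi^{M_1}$, and $(\mu+\theta+\lambda)(\theta+\lambda)=(\mu+\lambda)\lambda$. Combining with the $a\mu\xi/(\lambda+\mu)$ computed above, the entire prefactor simplifies to exactly $\xi^{M_1-1}$ after the $a$, $\mu$, $\lambda$, and $(\lambda+\mu)$ cancel in pairs. Distributing this $\xi^{M_1-1}$ across the three bracketed terms (with $v+\xi$ replaced by $\xi$ throughout) yields $P(M_1-1,\xi V)$ from the first term, $r_{M_1-1}^{M-2}(1,0,0)/d(0)^{M_1-1}$ from the second after $\xi^{M_1-1}$ cancels against $(d(0)\xi)^{M_1-1}$ in the denominator, and $d(0)^{M-M_1}P(M-1,\xi V)$ from the third after the powers of $\xi$ combine as $\xi^{M_1-1}\cdot\xi^{M-M_1}/\xi^{M-1}=1$. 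The stated formula then falls out with the original signs preserved.

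The proof has essentially no obstacle, only bookkeeping. The one point that deserves verification is the cancellation of the $a\mu$ numerator factor against the $\xi(1-d(0))$ denominator factor, since otherwise the right-hand side of Proposition 5.1 would appear to blow up at $(u,v,\theta)=(1,0,0)$; this cancellation is precisely what makes the limit represent a bona fide probability rather than a divergent expression.
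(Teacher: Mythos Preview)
Your proposal is correct and matches the paper's own approach: the corollary is stated without a separate proof and is obtained exactly by specializing Proposition~5.1 at $u=1$, $v=0$, $\theta=0$. Your verification that $\xi(1-d(0))=a\mu\xi/(\lambda+\mu)\neq 0$ and the resulting cancellation of the prefactor to $\xi^{M_1-1}$ is precisely the bookkeeping needed, and nothing further is required.
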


\begin{rem}
Through simulation of the process, we were able to produce some verification of the results via numerical examples. For two sets of parameters of the process $\left(\lambda,a,\xi,\mu,M_1,M,V\right)$, we generated 100 realizations of the process for each of a range of $M_1$ values and calculated the empirical probabilities of $P\left\{\mu_1<\min\left\{\mu,\nu\right\}\right\}$ for each:
\end{rem}

\begin{center}
\includegraphics[scale=0.6]{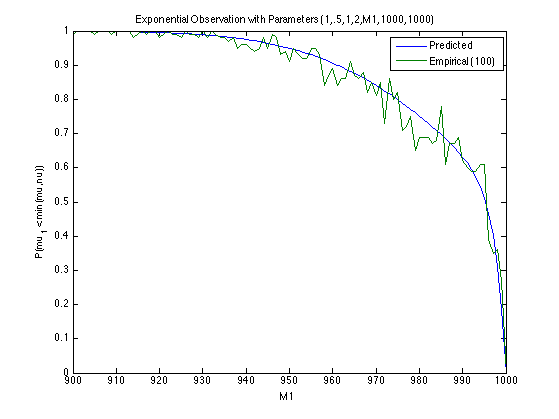}
\\
\includegraphics[scale=0.6]{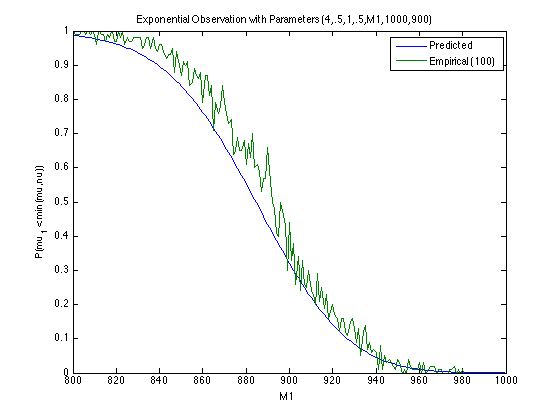}
\end{center}

\subsection{Marginal Transforms upon $\boldsymbol{\tau}_{\boldsymbol{\mu}_{\boldsymbol{1}}}$}

\noindent In this subsection, we present the joint transform and marginal transforms of each component of the process upon the (intermediate) control level $M_1$ crossing. These follow trivially from the joint functional of the previous subsection by adjusting the values of $u,\,v,\,$and $\theta$.
\begin{cor}
Under Assumptions 1-5 and notation
\eqref{expfirstnotation}-\eqref{explastnotation} for $M>M_1+1$,
\begin{align}
&\hspace{-.85cm}\varPhi_{\mu_1<\min\left\{\mu,\nu\right\}}\left(1,u,1,1,0,0,0,0,0,0,0,0\right)=E\left[u^{N_{\mu_1}}\boldsymbol{1}_{\left\{\mu_1<\min\left\{\mu,\nu\right\}\right\}}\right]\notag
\\&\hspace{-.85cm}=\frac{a\mu u^{M_1}}{\left(\mu+\lambda\right)\left(1-d\left(0\right)u\right)}\left[P\left(M_1-1,\xi V\right)-\frac{r_{M_1-1}^{M-2}\left(u,0,0\right)}{\left(d\left(0\right)u\right)^{M_1-1}}-\left(d\left(0\right)u\right)^{M-M_1}P\left(M-1,\xi V\right)\right].
\end{align}
\end{cor}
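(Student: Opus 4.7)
My plan is to derive this corollary as a direct specialization of Proposition~5.1, choosing $v=0$ and $\theta=0$ in the joint transform established there.

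First I would confirm the identification on the left-hand side. By inspection of the definition of $\varPhi_{\mu_1<\min\{\mu,\nu\}}$, the arguments $(1,u,1,1,0,0,0,0,0,0,0,0)$ collapse every factor inside the expectation except $u^{N_{\mu_1}}\boldsymbol{1}_{\{\mu_1<\min\{\mu,\nu\}\}}$, so the target quantity really equals $E[u^{N_{\mu_1}}\boldsymbol{1}_{\{\mu_1<\min\{\mu,\nu\}\}}]$, and it coincides with the joint transform of Proposition~5.1 evaluated at $v=\theta=0$.

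Next I would substitute $\theta=0$ into the abbreviations \eqref{expfirstnotation}--\eqref{explastnotation}. One obtains $c(0)=(\lambda+b\cdot 0)/(\lambda+0)=1$, so $[c(0)]^{M_1-1}$ drops out, while $d(0)=(\lambda+b\mu)/(\lambda+\mu)$ persists as a genuine parameter. Setting also $v=0$ reduces every $(v+\xi)V$ to $\xi V$ inside the regularized gamma functions, and collapses the factor $v+\xi(1-d(\theta)u)$ appearing in the proof of Proposition~5.1 to $\xi(1-d(0)u)$. The prefactor of Proposition~5.1 then simplifies, after cancelling one $\lambda$ and one $\xi$, to
\[
\frac{a\mu u^{M_1}\xi^{M_1-1}}{(\mu+\lambda)(1-d(0)u)}.
\]

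The remaining step, which is the only place that requires genuine bookkeeping, is showing that the three terms inside the bracket share a common factor $\xi^{-(M_1-1)}$ that cancels the $\xi^{M_1-1}$ sitting in the prefactor. For the first term this is immediate from $(v+\xi)^{-(M_1-1)}|_{v=0}=\xi^{-(M_1-1)}$. For the second, one uses $(d(0)u\xi)^{-(M_1-1)}=(d(0)u)^{-(M_1-1)}\xi^{-(M_1-1)}$. For the third, the exponent arithmetic $\xi^{M-M_1}/\xi^{M-1}=\xi^{-(M_1-1)}$ applies, producing $(d(0)u)^{M-M_1}\xi^{-(M_1-1)}P(M-1,\xi V)$. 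Pulling this common factor out of the bracket and multiplying by the prefactor delivers exactly the three-term expression claimed in the corollary. Since no probabilistic argument enters beyond Proposition~5.1, the only potential obstacle is this exponent accounting, which is routine; this is why the paper describes the derivation as trivial.
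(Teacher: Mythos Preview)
Your proposal is correct and follows exactly the route the paper takes: Section~5.2 states that these marginal transforms ``follow trivially from the joint functional of the previous subsection by adjusting the values of $u$, $v$, and $\theta$,'' and your substitution $v=\theta=0$ in Proposition~5.1, together with $c(0)=1$ and the $\xi^{-(M_1-1)}$ bookkeeping, carries this out verbatim. Your care in reading the denominator $v+\xi(1-d(\theta)u)$ from the proof (equation~\eqref{expproof5}) rather than from the displayed statement of Proposition~5.1 is well placed, since the latter appears to omit the factor $u$.
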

\begin{cor}
Under Assumptions 1-5 and notation \eqref{expfirstnotation}\eqref{explastnotation}, for $M>M_1+1$,
\begin{align}
&\varPhi_{\mu_1<\min\left\{\mu,\nu\right\}}\left(1,1,1,1,0,v,0,0,0,0,0,0\right)=E\left[e^{-vW_{\mu_1}}\boldsymbol{1}_{\left\{\mu_1<\min\left\{\mu,\nu\right\}\right\}}\right]\notag
\\&=\frac{a\mu}{\mu+\lambda}\frac{\xi^{M_1}}{v+\xi\left(1-d\left(0\right)\right)}\Biggl[\frac{P\left(M_1-1,\left(\xi+v\right)V\right)}{\left(v+\xi\right)^{M_1-1}}-\frac{r_{M_1-1}^{M-2}\left(1,v,0\right)}{\left(d\left(0\right)\xi\right)^{M_1-1}}\notag
\\&\hspace{4cm}-\frac{\left(d\left(0\right)\xi\right)^{M-M_1}P\left(M-1,\left(\xi+v\right)V\right)}{\left(v+\xi\right)^{M-1}}\Biggr].
\end{align}
\end{cor}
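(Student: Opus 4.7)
The plan is to obtain this marginal transform by direct specialization of the joint transform already established in Proposition 5.1, since the left-hand side of Corollary 5.4 is precisely
\[
\varPhi_{\mu_1<\min\{\mu,\nu\}}(1,1,1,1,0,v,0,0,0,0,0,0) = E\!\left[e^{-vW_{\mu_1}}\boldsymbol{1}_{\{\mu_1<\min\{\mu,\nu\}\}}\right],
\]
which equals the expression in Proposition~5.1 evaluated at $u=1$ and $\theta=0$. So the entire task reduces to a careful substitution and simplification, with no new probabilistic argument required; this is exactly the ``triviality'' the authors hint at in the sentence introducing the corollary.

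First I would write down the joint transform from Proposition 5.1 and substitute $u=1$ and $\theta=0$ term by term. At $\theta=0$ we have
\[
c(0) = \frac{\lambda + b\cdot 0}{\lambda + 0} = 1, \qquad d(0) = \frac{\lambda + b\mu}{\lambda + \mu},
\]
so the factor $[c(\theta)]^{M_1-1}$ collapses to $1$, and $(u\xi)^{M_1}$ becomes $\xi^{M_1}$. In the prefactor, the $\lambda$ in the numerator cancels with the $(\theta+\lambda)\big|_{\theta=0}=\lambda$ in the denominator, leaving $\frac{a\mu\,\xi^{M_1}}{(\mu+\lambda)\,(v+\xi(1-d(0)))}$, which is precisely the prefactor appearing in the corollary's formula.

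Next I would simplify the bracketed sum in Proposition~5.1 under the same substitutions. The first term $\frac{P(M_1-1,(v+\xi)V)}{(v+\xi)^{M_1-1}}$ is independent of $u,\theta$ and carries over unchanged. The second term becomes $\frac{r_{M_1-1}^{M-2}(1,v,0)}{(d(0)\xi)^{M_1-1}}$ after setting $u=1$, $\theta=0$ in the $u$-weighted power and in the argument of $r$. The third term $\frac{(d(\theta)u\xi)^{M-M_1}P(M-1,(v+\xi)V)}{(v+\xi)^{M-1}}$ reduces to $\frac{(d(0)\xi)^{M-M_1}P(M-1,(v+\xi)V)}{(v+\xi)^{M-1}}$. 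Assembling the three terms yields exactly the formula claimed in Corollary 5.4.

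There is essentially no obstacle beyond bookkeeping; the ``hard part,'' if any, is just verifying that no hidden $u$- or $\theta$-dependence has been overlooked in the auxiliary functions $r_k^j(u,v,\theta)$ and $d(\theta)$, and that the cancellation $\lambda/(\theta+\lambda)|_{\theta=0}=1$ is applied correctly to collapse the prefactor. Everything else is a mechanical substitution, which is why the corollary is stated as an immediate consequence of the joint-transform proposition rather than being given its own independent derivation.
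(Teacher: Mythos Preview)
Your proposal is correct and follows exactly the paper's own approach: the paper states that the marginal transforms in this subsection ``follow trivially from the joint functional of the previous subsection by adjusting the values of $u,\,v,$ and $\theta$,'' and your substitution $u=1$, $\theta=0$ in Proposition~5.1, together with the simplifications $c(0)=1$ and $\lambda/(\theta+\lambda)\big|_{\theta=0}=1$, is precisely that specialization.
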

\begin{cor}
Under Assumptions 1-5 and notation \eqref{expfirstnotation}-\eqref{explastnotation}, for $M>M_1+1$,
\begin{align}
&\varPhi_{\mu_1<\min\left\{\mu,\nu\right\}}\left(1,1,1,1,0,0,0,0,0,\theta,0,0\right)=E\left[e^{-\theta\tau_{\mu_1}}\boldsymbol{1}_{\left\{\mu_1<\min\left\{\mu,\nu\right\}\right\}}\right]\notag
\\&=\frac{a\mu\lambda c\left(\theta\right)^{M_1-1}}{\left(\theta+\lambda\right)\left(\mu+\theta+%
\lambda\right)\left(1-d\left(\theta\right)\right)}\notag
\\&\hspace{.5cm}\times\Biggl[P\left(M_1-1,\xi V\right)-\frac{r_{M_1-1}^{M-2}\left(1,0,\theta\right)}{d\left(\theta\right)^{M-M_1}}-d\left(\theta\right)^{M-M_1}P\left(M-1,\xi V\right)\Biggr].
\end{align}
\end{cor}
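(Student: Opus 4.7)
The plan is to derive this marginal transform as a direct specialization of the joint transform established in Proposition 5.1. By inspecting the definition of $\varPhi_{\mu_1<\min\{\mu,\nu\}}$, setting $u_0=\alpha_0=\alpha=1$, $v_0=\beta_0=\beta=0$, and $\theta_0=h_0=h=0$ deactivates every factor in the exponent except $e^{-\theta\tau_{\mu_1}}$, while the indicator $\boldsymbol{1}_{\{\mu_1<\min\{\mu,\nu\}\}}$ remains. Further setting $u=1$ and $v=0$ removes the $N_{\mu_1}$ and $W_{\mu_1}$ contributions, so
\[ \varPhi_{\mu_1<\min\{\mu,\nu\}}(1,1,1,1,0,0,0,0,0,\theta,0,0) = E\left[e^{-\theta\tau_{\mu_1}}\boldsymbol{1}_{\{\mu_1<\min\{\mu,\nu\}\}}\right]. \]
Hence it suffices to evaluate the right-hand side of Proposition 5.1 at $u=1$ and $v=0$.

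From there, the argument is purely algebraic bookkeeping. In the prefactor, $(u\xi)^{M_1}$ reduces to $\xi^{M_1}$, and the denominator $v+\xi(1-d(\theta))$ collapses to $\xi(1-d(\theta))$; one power of $\xi$ cancels against this, leaving $\xi^{M_1-1}$ to be absorbed into the bracket, together with the constant $\frac{a\mu\lambda [c(\theta)]^{M_1-1}}{(\theta+\lambda)(\mu+\theta+\lambda)(1-d(\theta))}$ that appears in the statement. Inside the bracket, $(v+\xi)^{M_1-1}$ and $(v+\xi)^{M-1}$ reduce to $\xi^{M_1-1}$ and $\xi^{M-1}$, the upper regularized gamma functions $P(\cdot,(v+\xi)V)$ become $P(\cdot,\xi V)$, and $r_{M_1-1}^{M-2}(u,v,\theta)$ becomes $r_{M_1-1}^{M-2}(1,0,\theta)$.

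Distributing the residual factor $\xi^{M_1-1}$ across the three bracketed terms then cancels the remaining $\xi$-powers term-by-term: the first term reduces to $P(M_1-1,\xi V)$, the second term absorbs $\xi^{M_1-1}$ against $(d(\theta)\xi)^{M_1-1}$, and the third term absorbs $\xi^{M_1-1}\cdot\xi^{M-M_1}/\xi^{M-1}=1$, yielding $d(\theta)^{M-M_1}P(M-1,\xi V)$. Assembling the three pieces with the simplified prefactor reproduces the formula in the statement.

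The only potential obstacle is the careful tracking of $\xi$-powers between the prefactor and the three bracketed terms, since a single miscount would corrupt the entire cancellation pattern. However, no new analytic content is required: the linearity of the inverse operator $\mathrm{D}^{-1}_{xyw}$ has already done all of the heavy lifting in Proposition 5.1, so this corollary is obtained by substitution and algebraic simplification alone.
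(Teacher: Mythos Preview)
Your approach is correct and is exactly what the paper does: the corollaries in Section~5.2 are obtained, as the paper says, ``trivially from the joint functional of the previous subsection by adjusting the values of $u$, $v$, and $\theta$,'' i.e.\ by substituting $u=1$, $v=0$ into Proposition~5.1. One minor remark: your own bookkeeping for the middle term yields $r_{M_1-1}^{M-2}(1,0,\theta)/d(\theta)^{M_1-1}$, which agrees with the parallel Corollary~5.2 (where the denominator is $d(0)^{M_1-1}$); the exponent $M-M_1$ printed in the statement of Corollary~5.6 is a typo in the paper, not an error in your derivation.
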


\subsection{The Joint Transform upon $\boldsymbol{\tau}_{\min\left\{\boldsymbol{\mu,\nu}\right\}}$}

\noindent In this subsection, we will find the joint transform of the process upon the first observed crossing of $M$ or $V$. By \eqref{expfunctional}, the desired transform is
\begin{align}
&\varPhi_{\mu_1<\min\left\{\mu,\nu\right\}}\left(1,1,1,\alpha,0,0,0,\beta,0,0,0,h\right)\notag
\\&=E\biggl[\alpha^{N_{\min\left\{\mu,\nu\right\}}}e^{-\beta W_{\min\left\{\mu,\nu\right\}}}e^{-h\tau_{\min\left\{\mu,\nu\right\}}}\boldsymbol{1}_{\left\{\mu_1<\min\left\{\mu,\nu\right\}\right\}}\biggr].
\end{align}
We will use the following additional notation,
\begin{equation}\label{expnotation2}
s\left(x,y\right)=\sum\limits_{i=1}^{M-M_1-1}\left(\frac{c\left(x\right)}{d\left(y\right)}\right)^i
\end{equation}

\begin{prop}
Under Assumptions 1-5 and notation \eqref{expfirstnotation}-\eqref{explastnotation} and \eqref{expnotation2}, for $M>M_1+1$,
\begin{align}
&E\biggl[\alpha^{N_{\min\left\{\mu,\nu\right\}}}e^{-\beta W_{\min\left\{\mu,\nu\right\}}}e^{-h\tau_{\min\left\{\mu,\nu\right\}}}\boldsymbol{1}_{\left\{\mu_1<\min\left\{\mu,\nu\right\}\right\}}\biggr]\notag
\\&=\frac{\left(a\mu\lambda\right)^2\alpha^{M_1+1}\xi^{M_1}c\left(h\right)^{M_1-1}}{\left(1-d\left(h\right)\alpha l\left(\beta\right)\right)\left(h+\lambda\right)^2\left(\mu+h+\lambda\right)^2}\frac{1}{\beta+\xi\left(1-d\left(h\right)\alpha\right)}\notag
\\&\hspace{.5cm}\times\Biggl\{l\left(\beta\right)\left[\frac{P\left(M_1-1,\left(\beta+\xi\right)V\right)}{\left(\beta+\xi\right)^{M_1-1}}-\frac{e^{-\left(\beta+\xi\left(1-d\left(h\right)\alpha\right)\right)V}-r_0^{M_1-2}\left(\alpha,\beta,h\right)}{\left(d\left(h\right)\alpha\xi\right)^{M_1-1}}\right]\notag
\\&\hspace{.5cm}+\left(l\left(\beta\right)-\frac{1}{\alpha
c}\right)\sum\limits_{j=1}^{M-M_1-1}\left(c\left(h\right)\alpha\xi\right)^j\notag
\\&\hspace{1cm}\times\left[\frac{P\left(M_1+j-1,\left(\beta+\xi\right)V\right)}{\left(\beta+\xi\right)^{M_1+j-1}}-\frac{e^{-\left(\beta+\xi\left(1-d\left(h\right)\alpha\right)\right)V}-r_0^{M_1+j-2}\left(\alpha,\beta,h\right)}{\left(d\left(h\right)\alpha\xi\right)^{M_1+j-1}}\right]\notag
\\&\hspace{.5cm}-\left(d\left(h\right)\alpha\xi\right)^{M-M_1}\left[l\left(\beta\right)+\left(l\left(\beta\right)-\frac{1}{\alpha c\left(h\right)}\right)s\left(h,h\right)\right]\notag
\\&\hspace{1cm}\times\left[\frac{P\left(M-1,\left(\beta+\xi\right)V\right)}{\left(\beta+\xi\right)^{M-1}}-\frac{e^{-\left(\beta+\xi\left(1-d\left(h\right)\alpha\right)\right)V}-r_0^{M-2}\left(\alpha,\beta,h\right)}{\left(d\left(h\right)\alpha\xi\right)^{M-1}}\right]\Biggr\}.
\end{align}
\end{prop}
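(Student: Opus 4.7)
The plan is to specialize Theorem~3.5's formula for $\varPhi_{\mu_1<\min\{\mu,\nu\}}$ at the parameter vector $(1,1,1,\alpha,0,0,0,\beta,0,0,0,h)$, so that only the variables of the $\min\{\mu,\nu\}$-crossing survive, and then invert $\text{D}_{xyw}$ in three stages, following the pattern of Proposition~5.1 (Model~2) but now tracking the non-trivial $(\alpha,\beta,h)$-variables that were neutralized there. Under Assumptions~1--5 the abbreviations \eqref{auxfirstnotation}--\eqref{auxlasnotation} collapse substantially: the choices $u_0=u=\alpha_0=1$, $v_0=v=\beta_0=0$, $\theta_0=\theta=h_0=0$ force $\phi=\varphi$, $\chi=\psi$ and $\phi^1=\chi$, so the kernel to invert becomes the product $\frac{\phi^1-\phi}{1-\varphi}\cdot\frac{\xi^1-\chi}{1-\psi}$ in which the first factor is a difference of equal-shaped $\gamma$'s and the second is no longer identically~$1$.

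The first step puts both factors in closed form by reusing the algebraic identities that introduced $c(h)$ and $d(h)$ in (5.18). Writing $l:=l(\beta+w)=\xi/(\beta+w+\xi)$ and $l_\beta:=l(\beta)$, one obtains
\[\frac{\phi^1-\phi}{1-\varphi}=\frac{a\mu\lambda\,\alpha yl}{(\mu+h+\lambda)(h+\lambda)(1-d(h)\alpha yl)}\cdot\frac{1-x}{1-c(h)\alpha xyl},\]
\[\frac{\xi^1-\chi}{1-\psi}=\frac{a\mu\lambda\,\alpha\,(l_\beta-yl)}{(\mu+h+\lambda)(h+\lambda)(1-d(h)\alpha l_\beta)(1-c(h)\alpha yl)}.\]
All $x$-dependence sits in the last factor of the first line, which equals $\mathcal{D}_p\{(c(h)\alpha yl)^p\}(x)$; thus $\mathcal{D}_x^{M_1-1}$ simply extracts $(c(h)\alpha yl)^{M_1-1}$, and after this inversion the remaining $y$-content is concentrated in $\dfrac{y^{M_1}\,l^{M_1}\,(l_\beta-yl)}{(1-d(h)\alpha yl)(1-c(h)\alpha yl)}$ multiplied by a $w$-free prefactor accumulating the constant $1/(1-d(h)\alpha l_\beta)$.

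The main obstacle is the subsequent $\mathcal{D}_y^{M-1}$ inversion. The idea is to expand $[(1-d(h)\alpha yl)(1-c(h)\alpha yl)]^{-1}$ as a Cauchy product of geometric series in $y$ (so the $y^n$-coefficient is $(\alpha l)^n\frac{c(h)^{n+1}-d(h)^{n+1}}{c(h)-d(h)}$), split the factor $l_\beta-yl$ into its two pieces, and apply the shift rule $\mathcal{D}_y^{M-1}(y^{M_1}F(y))=\mathcal{D}_y^{M-1-M_1}(F(y))$. This produces three groups of summands, matching the three bracketed blocks of the statement: (i) a boundary group at total $y$-degree $M_1-1$ carrying the factor $l^{M_1-1}l_\beta$, which will feed the $l(\beta)\cdot P(M_1-1,\ldots)$ terms; (ii) a middle group indexed by $j=1,\dots,M-M_1-1$ in which the $l_\beta$- and $-l$-contributions telescope into the explicit $(c(h)\alpha\xi)^{j}$ sum and leave a factor of $l^{M_1+j-1}$; and (iii) a top-degree contribution at $y^{M-1}$ that reassembles, via the geometric sum $s(h,h)=\sum_{i=1}^{M-M_1-1}(c(h)/d(h))^{i}$ of \eqref{expnotation2}, into the $(d(h)\alpha\xi)^{M-M_1}$-prefactored term. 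Identifying the telescoping in (ii) and the appearance of $s(h,h)$ in (iii) is the only genuinely new combinatorial work.

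The final step, $\mathcal{LC}_w^{-1}(\cdot)(V)=\mathcal{L}_w^{-1}(\tfrac{1}{w}\,\cdot\,)(V)$, is then routine and templated on (5.24)--(5.25). Each of the three groups contains powers $l^{k}=[\xi/(\beta+w+\xi)]^{k}$ with $k\in\{M_1-1,\,M_1+j-1,\,M-1\}$, and partial fractions at the two poles $w=-(\beta+\xi)$ and $w=-(\beta+\xi(1-d(h)\alpha))$ invert the two standard templates to produce, respectively, the incomplete-gamma contributions $P(k,(\beta+\xi)V)/(\beta+\xi)^{k}$ and the residues $r_0^{k-1}(\alpha,\beta,h)$ supplied by \eqref{expfirstnotation}. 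Substituting the three values of $k$ in turn and multiplying by the accumulated prefactor $\frac{a^2\mu^2\lambda^2\alpha^{M_1+1}c(h)^{M_1-1}\xi^{M_1}}{(\mu+h+\lambda)^2(h+\lambda)^2\,[\beta+\xi(1-d(h)\alpha)]}$ (in which $1/(1-d(h)\alpha l_\beta)$ has been rewritten via $l_\beta=\xi/(\beta+\xi)$) assembles the bracketed expression of Proposition~5.5. The $x$- and $w$-inversions are mechanical by analogy with Proposition~5.1; the $\mathcal{D}_y^{M-1}$ convolution is where the statement's three-part structure originates.
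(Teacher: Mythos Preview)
Your setup and the $\mathcal{D}_x^{M_1-1}$ step are correct and coincide with the paper: the closed forms you give for $\frac{\phi^1-\phi}{1-\varphi}$ and $\frac{\xi^1-\chi}{1-\psi}$ are right, and after the $x$-inversion you land exactly on the paper's intermediate expression $C\,l^{M_1}\dfrac{y^{M_1}(l_\beta-ly)}{(1-c\alpha ly)(1-d\alpha ly)}$ (equation (5.28)).

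The gap is in your $\mathcal{D}_y^{M-1}$ step, where the description is internally inconsistent. If you truly expand $[(1-d\alpha ly)(1-c\alpha ly)]^{-1}$ as a Cauchy product, then after applying $\mathcal{D}_y^{M-1}$ every term is a bare power of $l$ and there is \emph{no} factor $\frac{1}{1-d\alpha l}$ left; consequently the only pole in $w$ is at $w=-(\beta+\xi)$, and $\mathcal{LC}_w^{-1}$ yields only $P$-terms, not the $e^{-(\beta+\xi(1-d\alpha))V}$ and $r_0^{k-2}$ pieces that appear in the statement. You then invoke ``partial fractions at the two poles $w=-(\beta+\xi)$ and $w=-(\beta+\xi(1-d(h)\alpha))$,'' but the second pole cannot be there under the Cauchy-product route without an additional (unmentioned) resummation of the $d^n$-geometric tail. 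Relatedly, your indices are off by one and there is no ``boundary group at total $y$-degree $M_1-1$'': the series begins at $y^{M_1}$.

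What the paper actually does is apply property (vi) of Appendix~B, $\mathcal{D}_y^{k}\bigl(\tfrac{1}{(1-by)(1-ay)}\bigr)=\tfrac{1}{1-b}\sum_{j=0}^{k}a^{j}(1-b^{k+1-j})$, with $a=c\alpha l$, $b=d\alpha l$. This keeps $\tfrac{1}{1-d\alpha l}$ intact and produces the three blocks of (5.29), each of the form $\dfrac{l^{k}}{1-d\alpha l}$ with $k\in\{M_1,\,M_1+j,\,M\}$ (not $k-1$). The subsequent $\mathcal{LC}_w^{-1}$ of $\dfrac{l^{k}}{1-d\alpha l}$ then genuinely has two poles and delivers $\dfrac{P(k-1,(\beta+\xi)V)}{(\beta+\xi)^{k-1}}$ together with the $r_0^{k-2}$ term, which is why the statement shows $P(M_1-1,\cdot)$, $P(M_1+j-1,\cdot)$, $P(M-1,\cdot)$. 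Also note that the final prefactor carries \emph{both} $\frac{1}{1-d\alpha l(\beta)}$ (from $C$) \emph{and} $\frac{1}{\beta+\xi(1-d\alpha)}$ (from the $\mathcal{LC}_w^{-1}$); your parenthetical suggests you have merged these into one, which drops a factor.
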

\begin{proof}
The following term is independent of $x$.
\begin{align}
&\frac{\xi^1-\chi}{1-\psi}=\frac{\gamma\left(\alpha,w,h\right)-\gamma\left(\alpha y,w+\beta,h\right)}{1-\gamma\left(\alpha y,w+\beta,h\right)}=\frac{\mu}{\mu+h_\ast}\frac{h_y-h_\ast}{h_y},
\\&h_y=h+\lambda-\lambda\frac{a\alpha ly}{1-b\alpha ly},\hspace{.5cm}h_\ast=h+\lambda-\lambda\frac{a\alpha l\left(\beta\right)}{1-b\alpha l\left(\beta\right)},\hspace{.5cm}l=l\left(w+\beta\right),
\end{align}
so as in \eqref{expproof1}-\eqref{expproof4} with $\left(u,v,\theta\right)$ replaced with $\left(\alpha,\beta,h\right)$, we apply $\mathcal{D}_x^{M_1-1}$ to $\frac{\phi^1-\phi}{1-\varphi}$ while denoting $c=c\left(h\right),\,d=d\left(h\right)$ for convenience,
\begin{equation}
\mathcal{D}_x^{M_1-1}\left(\frac{\phi^1-\phi}{1-\varphi}\right)=\frac{\mu}{\mu+h_y}\frac{a\lambda\left(\alpha l\right)^{M_1}c^{M_1-1}}{h+\lambda}\frac{y^{M_1}}{1-b\alpha ly}.
\end{equation}
Manipulating $\frac{\mu}{\mu+h_y}$, $\frac{h_y-h_\ast}{h_y}$, and
$\frac{\mu}{\mu+h_\ast}$ similarly and combining with the above yields
\begin{equation}\label{expproof6}
\frac{\xi^1-\chi}{1-\psi}\mathcal{D}_x^{M_1-1}\left(\frac{\phi^1-\phi}{1-\varphi}\right)=Cl^{M_1}\frac{y^{M_1}\left(l\left(\beta\right)-ly\right)}{\left(1-c\alpha ly\right)\left(1-d\alpha ly\right)}
\end{equation}
where $C=\frac{\left(a\mu\lambda\right)^2\alpha^{M_1+1}c^{M_1-1}}{\left(1-d\alpha l\left(\beta\right)\right)\left(h+\lambda\right)^2\left(\mu+h+\lambda\right)^2}$. We can now apply $\mathcal{D}_y^{M-1}$ to \eqref{expproof6}, using properties (\textit{i}, \textit{iii}, \textit{vi}) of the inverse operator as given in Appendix B,
\begin{align}\label{expproof7}
&Cl^{M_1}\mathcal{D}_y^{M-1}\left(\frac{l\left(\beta\right)y^{M_1}-ly^{M_1+1}}{\left(1-c\alpha ly\right)\left(1-d\alpha ly\right)}\right)\notag
\\&=Cl\left(\beta\right)\frac{l^{M_1}}{1-d\alpha l}+C\left(l\left(\beta\right)-\frac{1}{\alpha c}\right)\sum\limits_{j=1}^{M-M_1-1}\left(c\alpha\right)^j\frac{l^{M_1+j}}{1-d\alpha l}\notag
\\&\hspace{.5cm}-C\left(d\alpha\right)^{M-M_1}\left[l\left(\beta\right)+\left(l\left(\beta\right)-\frac{1}{\alpha c}\right)s\left(h,h\right)\right]\frac{l^M}{1-d\alpha l}.
\end{align}
Next, we need to apply $\mathcal{L}\mathcal{C}^{-1}_w\left(\cdot\right)\left(V\right)$ to \eqref{expproof7}. Notice that each non-constant term (with respect to $w$) is of the form $\frac{l^k}{1-d\alpha l}$, so we establish a result for an arbitrary $k$. We have
\begin{equation*}
\frac{1}{w}\cdot\frac{l^k}{1-d\alpha l}=\frac{\xi^k}{w\left(w+\beta+\xi\right)^{k-1}\left(w+\beta+\xi\left(1-d\alpha\right)\right)}.
\end{equation*}
Applying $\mathcal{L}\mathcal{C}_w^{-1}$ here is the same as in \eqref{expproof5} with $M_1=k$, $v=\beta$, and $\theta=h$, so we have
\begin{align}
\mathcal{L}^{-1}_w\left(\frac{1}{w}\frac{l^k}{1-d\alpha l}\right)\left(V\right)&=\frac{\xi^k}{\beta+\xi\left(1-d\alpha\right)}\notag
\\&\hspace{.5cm}\times\left[\frac{1-k_{k-2}\left(\beta\right)}{\left(\beta+\xi\right)^{k-1}}-\frac{e^{-\left(\beta+\xi\left(1-d\left(h\right)\alpha\right)\right)V}-r_0^{k-2}\left(\alpha,\beta,h\right)}{\left(d\alpha\xi\right)^{k-1}}\right].
\end{align}
Applying this result to \eqref{expproof7} and expanding the constant $C$ yields the statement of the theorem for $M>M_1+1$. Deriving a formula for $M=M_1+1$ is similar to the second part of the proof of Proposition 5.1, where applying $\mathcal{D}_y^{M_1}$ instead yields a constant, $l\left(\beta\right)$ in \eqref{expproof7}.
\end{proof}

\subsection{Marginal Transforms upon $\boldsymbol{\tau}_{\min\left\{\boldsymbol{\mu,\nu}\right\}}$}

\noindent Next, we find the marginal transforms of each component upon $\tau_{\min\left\{\mu,\nu\right\}}$, the first observed crossing of $M$ or $V$. These readily follow from the joint functional of the previous section by adjusting the values of $\alpha,\,\beta,\,$and $h$.
\begin{cor}
Under Assumptions 1-5 and notation \eqref{expfirstnotation}-\eqref{explastnotation} and \eqref{expnotation2},
\begin{align}
&E\left[\alpha^{N_{\min\left\{\mu,\nu\right\}}}\boldsymbol{1}_{\left\{\mu_1<\min\left\{\mu,\nu\right\}\right\}}\right]\notag
\\&=\frac{\left(a\mu\right)^2\alpha^{M_1+1}}{\left(1-d\left(0\right)\alpha\right)^2\left(\mu+h+\lambda\right)^2}\Biggl\{P\left(M_1-1,\xi V\right)-\frac{e^{-\xi\left(1-d\left(0\right)\alpha\right)V}-r_0^{M_1-2}\left(\alpha,0,0\right)}{\left(d\left(0\right)\alpha\right)^{M_1-1}}\notag
\\&\hspace{.5cm}+\frac{\alpha-1}{\alpha}\sum\limits_{j=1}^{M-M_1-1}\alpha^j\left[P\left(M_1+j-1,\xi V\right)-\frac{e^{-\xi\left(1-d\left(h\right)\alpha\right)V}-r_0^{M_1+j-2}\left(\alpha,0,0\right)}{\left(d\left(0\right)\alpha\right)^{M_1+j-1}}\right]\notag
\\&\hspace{.5cm}-\left(d\left(0\right)\alpha\xi\right)^{M-M_1}\left[1+\left(\frac{\alpha-1}{\alpha}\right)s\left(0,0\right)\right]\notag
\\&\hspace{1cm}\times\left[P\left(M-1,\xi V\right)-\frac{e^{-\xi\left(1-d\left(0\right)\alpha\right)V}-r_0^{M-2}\left(\alpha,0,0\right)}{\left(d\left(0\right)\alpha\right)^{M-1}}\right]\Biggr\}.
\end{align}
\end{cor}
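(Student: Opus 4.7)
The plan is to obtain Corollary 5.7 as a direct specialization of the joint transform in Proposition 5.5, using the general recipe described at the top of Subsection 5.4: set $\beta=0$ and $h=0$ in the joint functional
$$E\bigl[\alpha^{N_{\min\{\mu,\nu\}}}e^{-\beta W_{\min\{\mu,\nu\}}}e^{-h\tau_{\min\{\mu,\nu\}}}\boldsymbol{1}_{\{\mu_1<\min\{\mu,\nu\}\}}\bigr]$$
and simplify. Since Proposition 5.5 already contains the full derivation via the inverse operator $\text{D}_{xyw}^{-1}$, no further operator inversion is needed here.

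First I would record the scalar evaluations at $(\beta,h)=(0,0)$ used in the notation \eqref{expfirstnotation}--\eqref{explastnotation}: one has $l(\beta)=\xi/(\xi+\beta)\to 1$, $c(h)=(\lambda+bh)/(\lambda+h)\to 1$, whereas $d(0)=(\lambda+b\mu)/(\lambda+\mu)$ is retained as a nontrivial constant. The function $s(x,y)$ evaluates to $s(0,0)$ and the bracket argument $r_{0}^{\cdot}(\alpha,\beta,h)$ collapses to $r_{0}^{\cdot}(\alpha,0,0)$. These substitutions make $c(h)^{M_1-1}=1$, kill the factors $l(\beta)$ in front of each bracket (leaving the natural $P$--$r$ structure), and leave the coefficient $(l(\beta)-1/(\alpha c(h)))=(\alpha-1)/\alpha$ that produces the weighting in the middle sum of Corollary 5.7.

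Next I would collect the prefactor. The prefactor of Proposition 5.5 is
$$\frac{(a\mu\lambda)^2\alpha^{M_1+1}\xi^{M_1}c(h)^{M_1-1}}{(1-d(h)\alpha l(\beta))(h+\lambda)^2(\mu+h+\lambda)^2}\cdot\frac{1}{\beta+\xi(1-d(h)\alpha)}.$$
At $(\beta,h)=(0,0)$ the denominator $(1-d(h)\alpha l(\beta))(\beta+\xi(1-d(h)\alpha))$ becomes $\xi(1-d(0)\alpha)^2$, the factor $(h+\lambda)^2$ cancels the $\lambda^2$ in the numerator, and one power of $\xi$ in $\xi^{M_1}$ is absorbed. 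Pairing the remaining $\xi^{M_1-1}$ with the $(\beta+\xi)^{-k}=\xi^{-k}$ appearing in each of the three brackets of Proposition 5.5 cleanly produces the $P(\,\cdot\,,\xi V)$ terms as stated, while the exponential $e^{-(\beta+\xi(1-d(h)\alpha))V}$ becomes $e^{-\xi(1-d(0)\alpha)V}$.

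The main thing to watch is the careful bookkeeping of where the $\xi$ powers are absorbed so that the final form has the prefactor $(a\mu)^2\alpha^{M_1+1}/[(1-d(0)\alpha)^2(\mu+h+\lambda)^2]$ together with the three P--minus--$r$ brackets in the stated combination, including the $(d(0)\alpha\xi)^{M-M_1}$ weighting on the last bracket and the $[1+(\alpha-1)/\alpha\cdot s(0,0)]$ prefactor on it, which comes from specializing $l(\beta)+(l(\beta)-1/(\alpha c(h)))s(h,h)$. There is no genuine analytical obstacle, only the substitution and regrouping just described; the result is then read off term by term from Proposition 5.5.
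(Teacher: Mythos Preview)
Your approach is correct and is exactly what the paper does: the corollary is obtained by setting $\beta=0$ and $h=0$ in Proposition~5.5 and simplifying, with no additional operator inversion required. The paper itself offers no separate proof beyond the remark that the marginal transforms ``readily follow from the joint functional of the previous section by adjusting the values of $\alpha$, $\beta$, and $h$,'' so your detailed bookkeeping of the $\xi$-powers and the collapse $c(0)=l(0)=1$, $l(\beta)-1/(\alpha c(h))\to(\alpha-1)/\alpha$ is precisely the intended (and only) argument.
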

\begin{cor}
Under Assumptions 1-5 and notation \eqref{expfirstnotation}-\eqref{explastnotation} and \eqref{expnotation2},
\begin{align}
&\hspace{-.7cm}E\left[e^{-\beta W_{\min\left\{\mu,\nu\right\}}}\boldsymbol{1}_{\left\{\mu_1<\min\left\{\mu,\nu\right\}\right\}}\right]\notag
\\&\hspace{-.7cm}=\frac{\left(a\mu\right)^2\xi^{M_1}}{\left(1-d\left(0\right)l\left(\beta\right)\right)\left(\mu+\lambda\right)^2}\frac{1}{\beta+\xi\left(1-d\left(0\right)\right)}\notag
\\&\hspace{-.2cm}\times\Biggl\{l\left(\beta\right)\left[\frac{P\left(M_1-1,\left(\beta+\xi\right)V\right)}{\left(\beta+\xi\right)^{M_1-1}}-\frac{e^{-\left(\beta+\xi\left(1-d\left(h\right)\right)\right)V}-r_0^{M_1-2}\left(1,\beta,0\right)}{\left(d\left(0\right)\xi\right)^{M_1-1}}\right]\notag
\\&\hspace{.3cm}+\left(l\left(\beta\right)-1\right)\sum\limits_{j=1}^{M-M_1-1}\xi^j\left[\frac{P\left(M_1+j-1,\left(\beta+\xi\right)V\right)}{\left(\beta+\xi\right)^{M_1+j-1}}-\frac{e^{-\left(\beta+\xi\left(1-d\left(0\right)\right)\right)V}-r_0^{M_1+j-2}\left(1,\beta,0\right)}{\left(d\left(0\right)\xi\right)^{M_1+j-1}}\right]\notag
\\&\hspace{.3cm}-\left(d\left(0\right)\xi\right)^{M-M_1}\left[l\left(\beta\right)+\left(l\left(\beta\right)-1\right)s\left(0,0\right)\right]\notag
\\&\hspace{.7cm}\times\left[\frac{P\left(M-1,\left(\beta+\xi\right)V\right)}{\left(\beta+\xi\right)^{M-1}}-\frac{e^{-\left(\beta+\xi\left(1-d\left(h\right)\right)\right)V}-r_0^{M-2}\left(1,0,0\right)}{\left(d\left(h\right)\xi\right)^{M-1}}\right]\Biggr\}.
\end{align}
\end{cor}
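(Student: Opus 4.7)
The plan is to obtain Corollary 5.7 as a direct specialization of Proposition 5.5 by substituting $\alpha=1$ and $h=0$ into the joint transform. By definition of the functional, $\varPhi_{\mu_1<\min\{\mu,\nu\}}(1,1,1,1,0,0,0,\beta,0,0,0,0)$ is exactly $E[e^{-\beta W_{\min\{\mu,\nu\}}}\boldsymbol{1}_{\{\mu_1<\min\{\mu,\nu\}\}}]$, so no fresh derivation involving the operator $\mathrm{D}_{xyw}^{-1}$ is required; the task reduces to careful algebraic simplification of the expression already in hand.

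First I would record the collapses of the auxiliary quantities under $\alpha=1,\,h=0$. From \eqref{explastnotation}, $c(0)=(\lambda+b\cdot 0)/(\lambda+0)=1$, so the factor $c(h)^{M_1-1}$ in Proposition 5.5 disappears and each $(c(h)\alpha\xi)^j$ becomes simply $\xi^j$; the quantity $d(h)$ collapses to $d(0)$ and $s(h,h)$ to $s(0,0)$. The overall prefactor $\frac{(a\mu\lambda)^2\alpha^{M_1+1}\xi^{M_1}c(h)^{M_1-1}}{(1-d(h)\alpha l(\beta))(h+\lambda)^2(\mu+h+\lambda)^2}\cdot\frac{1}{\beta+\xi(1-d(h)\alpha)}$ then reduces to $\frac{(a\mu)^2\xi^{M_1}}{(1-d(0)l(\beta))(\mu+\lambda)^2}\cdot\frac{1}{\beta+\xi(1-d(0))}$, which matches the leading factor asserted in Corollary 5.7.

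Next I would verify the three bracketed pieces term by term. The first survives essentially verbatim with $r_0^{M_1-2}(\alpha,\beta,h)\mapsto r_0^{M_1-2}(1,\beta,0)$. In the second, the coefficient $(l(\beta)-\tfrac{1}{\alpha c(h)})$ collapses to $(l(\beta)-1)$ and the power $(c(h)\alpha\xi)^j$ becomes $\xi^j$, giving the summation $\sum_{j=1}^{M-M_1-1}\xi^j[\,\cdots\,]$ displayed in the corollary. In the third, $(d(h)\alpha\xi)^{M-M_1}$ reduces to $(d(0)\xi)^{M-M_1}$, and the bracket $[l(\beta)+(l(\beta)-\tfrac{1}{\alpha c(h)})s(h,h)]$ simplifies to $[l(\beta)+(l(\beta)-1)s(0,0)]$, again matching the claim.

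I do not anticipate any real obstacle: the argument is a parameter substitution, with the only scope for error being bookkeeping — every $r_0^k(\alpha,\beta,h)$ should uniformly become $r_0^k(1,\beta,0)$ and every $d(h)$ should uniformly become $d(0)$. A couple of visible occurrences in the stated Corollary (a $d(h)$ in the third bracket and an $r_0^{M-2}(1,0,0)$) appear to be transcription slips rather than genuine deviations from this uniform substitution, and the derivation I have outlined produces the intended formula without ambiguity.
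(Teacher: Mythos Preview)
Your proposal is correct and matches the paper's own approach: the paper states that Corollaries 5.6--5.8 ``readily follow from the joint functional of the previous section by adjusting the values of $\alpha,\,\beta,\,$and $h$,'' which is precisely the substitution $\alpha=1$, $h=0$ into Proposition 5.5 that you carry out. Your identification of the residual $d(h)$ and $r_0^{M-2}(1,0,0)$ in the stated corollary as transcription slips is also well-taken; the systematic substitution you describe yields the intended formula.
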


\begin{cor}
Under Assumptions 1-5 and notation \eqref{expfirstnotation}-\eqref{explastnotation} and \eqref{expnotation2},
\begin{align}
&E\left[e^{-h\tau_{\min\left\{\mu,\nu\right\}}}\boldsymbol{1}_{\left\{\mu_1<\min\left\{\mu,\nu\right\}\right\}}\right]\notag
\\&=\frac{\left(a\mu\lambda\right)^2c\left(h\right)^{M_1-1}}{\left(1-d\left(h\right)\right)^2\left(h+\lambda\right)^2\left(\mu+h+\lambda\right)^2}\Biggl\{\left[P\left(M_1-1,\xi V\right)-\frac{e^{-\xi\left(1-d\left(h\right)\right)V}-r_0^{M_1-2}\left(1,0,h\right)}{d\left(h\right)^{M_1-1}}\right]\notag
\\&\hspace{.5cm}+\frac{c\left(h\right)-1}{c\left(h\right)}\sum\limits_{j=1}^{M-M_1-1}c\left(h\right)^j\left[1-P\left(M_1+j-1,\xi V\right)-\frac{e^{-\xi\left(1-d\left(h\right)\right)V}-r_0^{M_1+j-2}\left(1,0,h\right)}{d\left(h\right)^{M_1+j-1}}\right]\notag
\\&\hspace{.5cm}-d\left(h\right)^{M-M_1}\left[1+\left(\frac{c\left(h\right)-1}{c\left(h\right)}\right)s\left(h,h\right)\right]\notag
\\&\hspace{1cm}\times\left[P\left(M-1,\xi V\right)-\frac{e^{-\xi\left(1-d\left(h\right)\right)V}-r_0^{M-2}\left(1,0,h\right)}{d\left(h\right)^{M-1}}\right]\Biggr\}.
\end{align}
\end{cor}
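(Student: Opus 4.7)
The plan is to obtain this marginal transform as an immediate specialization of the joint transform in Proposition 5.5: setting $\alpha=1$ and $\beta=0$ inside
\begin{equation*}
E\!\left[\alpha^{N_{\min\{\mu,\nu\}}}e^{-\beta W_{\min\{\mu,\nu\}}}e^{-h\tau_{\min\{\mu,\nu\}}}\boldsymbol{1}_{\{\mu_1<\min\{\mu,\nu\}\}}\right]
\end{equation*}
kills the node and weight arguments and leaves exactly the marginal in the statement. Thus the whole proof reduces to algebraic bookkeeping on the right-hand side of Proposition 5.5; no new probabilistic input, no further application of $\mathrm{D}_{xyw}^{-1}$, and no convergence argument beyond those already invoked are needed.

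First I would record the collapses that occur under $\alpha=1$, $\beta=0$. With the notation in \eqref{expfirstnotation}-\eqref{explastnotation}, $l(\beta)=l(0)=1$, which yields $d(h)\alpha l(\beta)=d(h)$, $1-d(h)\alpha l(\beta)=1-d(h)$, and $\beta+\xi(1-d(h)\alpha)=\xi(1-d(h))$. Similarly $\alpha c(h)=c(h)$, so $l(\beta)-\tfrac{1}{\alpha c(h)}=\tfrac{c(h)-1}{c(h)}$, and $l(\beta)+\bigl(l(\beta)-\tfrac{1}{\alpha c(h)}\bigr)s(h,h)=1+\tfrac{c(h)-1}{c(h)}s(h,h)$. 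Also $\alpha^{M_1+1}=1$, and every $r_0^{k}(\alpha,\beta,h)$ becomes $r_0^{k}(1,0,h)$. After these substitutions the leading prefactor of Proposition 5.5 collapses to
\begin{equation*}
\frac{(a\mu\lambda)^{2}\,\xi^{M_1-1}\,c(h)^{M_1-1}}{(1-d(h))^{2}(h+\lambda)^{2}(\mu+h+\lambda)^{2}},
\end{equation*}
i.e.\ it carries one extra factor $\xi^{M_1-1}$ compared to the prefactor stated in the corollary; this factor must be absorbed into the three bracketed terms.

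The second step is therefore to redistribute $\xi^{M_1-1}$ among the brackets. In the first bracket, $\xi^{M_1-1}$ cancels $(\beta+\xi)^{M_1-1}=\xi^{M_1-1}$ in the $P$-denominator and cancels the $\xi^{M_1-1}$ inside $(d(h)\alpha\xi)^{M_1-1}=(d(h)\xi)^{M_1-1}$ in the exponential denominator, producing the first line of the statement verbatim. For the $j$-th summand, the prefactor $(c(h)\alpha\xi)^{j}=c(h)^{j}\xi^{j}$ combined with the external $\xi^{M_1-1}$ yields $\xi^{M_1+j-1}$, which cancels against $(\beta+\xi)^{M_1+j-1}$ and, up to the leftover $d(h)^{M_1+j-1}$, against $(d(h)\xi)^{M_1+j-1}$, leaving only $c(h)^{j}$ as the external coefficient. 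The third bracket is handled identically with $M$ in place of $M_1+j$. The main obstacle is purely clerical, namely tracking the $\xi$-powers through three nested brackets; I would expect the only real point of care to be the sign and form of the middle bracketed expression, which should be confirmed by comparing the substituted form against the statement term by term.
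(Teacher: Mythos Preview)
Your approach is correct and is exactly the paper's: the paper states that the marginal transforms in this subsection ``readily follow from the joint functional of the previous section by adjusting the values of $\alpha$, $\beta$, and $h$,'' and your substitution $\alpha=1$, $\beta=0$ together with the $\xi^{M_1-1}$ redistribution carries this out in full detail (note that the joint result you invoke is Proposition~5.7 in the paper's numbering, not 5.5). Your flagged caution about the middle bracket is well placed: the honest specialization yields $P(M_1+j-1,\xi V)$ there, so the ``$1-$'' appearing in the printed statement is a typographical slip rather than a defect in your argument.
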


\subsection{Time Between Successive Threshold Crossings}

\noindent Next, we provide a functional of the time between the observed $M_1$ crossing and the first observed $M$ or $V$ crossing,
\begin{equation}
\varPhi_{\mu_1<\min\left\{\mu,\nu\right\}}\left(1,1,1,1,0,0,0,0,0,-h,0,h\right)=E\left[e^{-h\left(\tau_{\min\left\{\mu,\nu\right\}}-\tau_{\mu_1}\right)}\boldsymbol{1}_{\left\{\mu_1<\min\left\{\mu,\nu\right\}\right\}}\right].
\end{equation}

\begin{prop}
Under Assumptions 1-5 and notation \eqref{expfirstnotation}-\eqref{explastnotation} and \eqref{expnotation2},
\begin{align}
&E\left[e^{-h\left(\tau_{\min\left\{\mu,\nu\right\}}-\tau_{\mu_1}\right)}\boldsymbol{1}_{\left\{\mu_1<\min\left\{\mu,\nu\right\}\right\}}\right]\notag
\\&=\frac{a\mu^2\lambda}{\left(\mu+\lambda\right)\left(\mu+h\right)\left(h+\lambda\right)\left(1-d\left(0\right)\right)}\Biggl\{\left[P\left(M_1-1,\xi V\right)-\frac{e^{-\xi\left(1-d\left(0\right)\right)V}-r_0^{M_1-2}\left(1,0,0\right)}{d\left(0\right)^{M_1-1}}\right]\notag
\\&\hspace{.5cm}+\frac{c\left(h\right)-1}{c\left(h\right)}\sum\limits_{j=1}^{M-M_1-1}c\left(h\right)^j\left[P\left(M_1+j-1,\xi V\right)-\frac{e^{-\xi\left(1-d\left(0\right)\right)V}-r_0^{M_1+j-2}\left(1,0,0\right)}{d\left(0\right)^{M_1+j-1}}\right]\notag
\\&\hspace{.5cm}-d\left(0\right)^{M-M_1}\left(1+\frac{c\left(h\right)-1}{c\left(h\right)}s\left(h,0\right)\right)\notag
\\&\hspace{1cm}\times\left[P\left(M-1,\xi V\right)-\frac{e^{-\xi\left(1-d\left(0\right)\right)V}-r_0^{M-2}\left(1,0,0\right)}{d\left(0\right)^{M-1}}\right]\Biggr\}.
\end{align}
\end{prop}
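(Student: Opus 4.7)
The strategy mirrors the proofs of Propositions 5.1 and 5.4: specialize the master functional $\varPhi_{\mu_1<\min\{\mu,\nu\}}$ of Theorem 3.5 at $(u_0,u,\alpha_0,\alpha,v_0,v,\beta_0,\beta,\theta_0,\theta,h_0,h)=(1,1,1,1,0,0,0,0,0,-h,0,h)$ and apply $\text{D}^{-1}_{xyw}$ at $(M_1,M,V)$ in three stages, $\mathcal{D}_x^{M_1-1}$, then $\mathcal{D}_y^{M-1}$, then $\mathcal{L}\mathcal{C}_w^{-1}(\cdot)(V)$. The structural reason the answer blends $c(h)$ with $d(0)$ (rather than $c,d$ both at $h$ as in Corollary 5.7) is that the choice $\theta=-h$ makes the first-factor effective time-transform argument $\theta+h_0+h$ vanish, producing rate $d(0)$, whereas the second-factor argument $h_0+h=h$ produces rate $c(h)$.

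Using $L(z)=\mu/(\mu+z)$, $g(z)=az/(1-bz)$, and the algebraic identities $\mu+\lambda-\lambda g(zl(w))=(\mu+\lambda)(1-d(0)zl(w))/(1-bzl(w))$ and $h+\lambda-\lambda g(zl(w))=(h+\lambda)(1-c(h)zl(w))/(1-bzl(w))$, both direct rearrangements of $a+b=1$, the two factors reduce to
\begin{equation*}
\frac{\phi^1-\phi}{1-\varphi}=\frac{a\mu\,yl(w)(1-x)}{(\mu+\lambda)(1-d(0)yl(w))(1-xyl(w))},\quad\frac{\xi^1-\chi}{1-\psi}=\frac{\mu\lambda(1-yl(w))}{(\mu+h)(h+\lambda)(1-c(h)yl(w))}.
\end{equation*}
The step $\mathcal{D}_x^{M_1-1}$ collapses $(1-x)/(1-xyl(w))$ into $(yl(w))^{M_1-1}$ by the elementary identity $\mathcal{D}_x^k((1-x)/(1-Ax))=A^k$ (Appendix B, property (iii)), producing $C\cdot l(w)^{M_1}\cdot y^{M_1}(1-yl(w))/[(1-d(0)yl(w))(1-c(h)yl(w))]$ with $C=a\mu^2\lambda/[(\mu+\lambda)(\mu+h)(h+\lambda)]$.

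The key step is $\mathcal{D}_y^{M-1}$; the approach is identical to that of Proposition 5.4 (equations (5.60)--(5.62)) under the substitutions $c\mapsto c(h)$, $d\mapsto d(0)$, $\alpha\mapsto 1$, $l(\beta)\mapsto l(0)=1$. One decomposes via the ansatz $1-yl(w)=A(1-c(h)yl(w))+B(1-d(0)yl(w))$ (yielding $A=(d(0)-1)/(d(0)-c(h))$ and $B=(1-c(h))/(d(0)-c(h))$) to split the rational factor into two $y$-geometric pieces, applies the partial-sum operator $\mathcal{D}_y^{M-1}$ to each, and regroups the resulting polynomial in $l(w)$ as a linear combination of terms $l(w)^k/(1-d(0)l(w))$ for $k=M_1,\ldots,M$. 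This yields the three-piece expression
\begin{equation*}
C\left[\frac{l^{M_1}}{1-d(0)l}+\tfrac{c(h)-1}{c(h)}\sum_{j=1}^{M-M_1-1}c(h)^j\frac{l^{M_1+j}}{1-d(0)l}-d(0)^{M-M_1}\bigl(1+\tfrac{c(h)-1}{c(h)}s(h,0)\bigr)\frac{l^M}{1-d(0)l}\right],
\end{equation*}
with $s(h,0)=\sum_{i=1}^{M-M_1-1}(c(h)/d(0))^i$ emerging at the $k=M$ boundary from converting the natural denominator $1-c(h)l$ of the $B$-piece into the uniform $1-d(0)l$ presentation.

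Finally, apply $\mathcal{L}\mathcal{C}_w^{-1}(\cdot)(V)$ term by term. Writing $l(w)^k/(1-d(0)l(w))=\xi^k/[(w+\xi)^{k-1}(w+\xi(1-d(0)))]$ and performing partial fractions in $w$ -- the same computation that yields Proposition 5.4's formula (5.68), specialized at $\beta=0$, $\alpha=1$, $d=d(0)$ -- gives
\begin{equation*}
\mathcal{L}\mathcal{C}_w^{-1}\!\left(\frac{l(w)^k}{1-d(0)l(w)}\right)(V)=\frac{1}{1-d(0)}\Bigl[P(k-1,\xi V)-\frac{e^{-\xi(1-d(0))V}-r_0^{k-2}(1,0,0)}{d(0)^{k-1}}\Bigr].
\end{equation*}
The outer factor $1/(1-d(0))$ combines with $C$ to produce the stated prefactor $a\mu^2\lambda/[(\mu+\lambda)(\mu+h)(h+\lambda)(1-d(0))]$, while substituting $k=M_1,M_1+j,M$ into the three-piece expression above reproduces the three bracketed summands of the statement. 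The main technical obstacle is the $\mathcal{D}_y^{M-1}$ regrouping step, because the partial-sum output has natural denominators $1-c(h)l(w)$ on one piece and $1-d(0)l(w)$ on the other; converting both to the uniform $1/(1-d(0)l(w))$ presentation -- which is what makes the final Laplace-Carson inversion in $w$ feasible -- is where the algebraic work is concentrated.
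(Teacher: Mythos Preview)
Your proposal is correct and follows precisely the approach the paper intends: the paper omits the proof entirely, stating only that it ``is similar to the above propositions,'' and your argument is exactly that---a rerun of the Proposition~5.7 computation with the parameter choice $\theta=-h$ forcing the first factor's effective time argument to vanish (producing $d(0)$) while the second factor retains $c(h)$. Your identification of the $\mathcal{D}_y^{M-1}$ regrouping as the sole nontrivial step, and your handling of it via the same partial-fraction/uniform-denominator manoeuvre as in (5.27), matches the implicit template.
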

\noindent The proof is similar to the above propositions and thus is omitted.\hfill{\qedsymbol}

\subsection{Summary}

\noindent In this article we deal with the detection and prediction of losses due to cyber attacks waged on large-scale vital networks. We model the accumulation of losses to a network during a series of hostile attacks by a 2-dimensional monotone random walk process as observed by an independent delayed renewal process. The first component of the process is associated with the number of nodes (such as routers or operational sites) incapacitated by successive attacks. Each node has a weight associated with its incapacitation (such as loss of operational capacity or financial expense associated with repair), and the second component represents the cumulative weight associated with the nodes lost. Each component has a fixed threshold, and crossing of a threshold by either component represents the network entering a critical condition. We obtain tractable results in the form of joint transforms of the predicted time of the first observed threshold crossing, along with the values of each underlying component (such as the number of perished nodes, their associated cumulative weight) upon this time and one observation prior to the first passage time. We demonstrated the tractability of the obtained results on two major and various other special cases. We further validated the results through the comparison with stochastically simulated attacks.

\subsection{A Forthcoming and Future Work.}

\noindent We continue our research on strategic networks in two directions. First, we plan to further refine the obtained predicted data hindered by an eventual crudeness of observations. This will be rendered by means of so-called ``time sensitive analysis.'' The latter is a completely different technique applied to continuous time parameter processes and it will allow us to obtain the underlying distributions in any vicinity of the first passage time reviving missed moments of attacks due to delayed observations. These measures may yield a more accurate information about the status of the network than those delivered by auxiliary thresholds. The method of auxiliary thresholds will thus be replaced by the time sensitive analysis, although a combination of both techniques is not excluded. Independent of this refinement, we also plan to introduce strategic defense of finite networks, which can be utilized to random walks on smaller scale graphs.

\section*{Acknowledgement}

\noindent The authors are very grateful to the referees for their constructive remarks and suggestions that led to a better version.

\bibliographystyle{plain}
\bibliography{OSDSN}
\nocite{Agarwal2004,Akyidiz2002,Barrat2008,Bingham2001,
Boginski2009,Crescenzo2009,
Dshalalow2005,Dshalalow2006,Dshalalow2009,Elsasser2006,Franceschetti2007,
Franceschetti2007a,Garlaschelli2009,Gilbert1961,Haenggi2009,Hida1995,
Kadankov2005,Kadankova2007,Kalisky2006,Kyprianou2003,Mellander1992,
Muzy2000,Newman2002,Newman2002a,Newman2003,Newman2004,Porfiri2008,
Redner2001,Takacs1978,Takacs1984,Telcs1989,Wu1982}
\appendix
\begin{appendices}
\section{\protect\centering Proof of the Value of $\gamma(z,v,\theta)$}
\begin{lem}
The joint functional of increments per observation epoch is
\begin{equation}
\gamma\left(z,v,\theta\right)=L\left[\theta+\lambda-\lambda g\left(zl\left(v\right)\right)\right]
\end{equation}
\end{lem}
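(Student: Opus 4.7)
The plan is to compute $\gamma(z,v,\theta) = E[z^{\mathcal{N}((\tau_{n-1},\tau_n])} e^{-v\mathcal{W}((\tau_{n-1},\tau_n])} e^{-\theta \Delta_n}]$ (and the analogous expression over $[0,\tau_0]$) by conditioning on the length of the observation interval and invoking the joint transform \eqref{markedpoissontransform} of the marked Poisson random measure $\eta$ over a deterministic Borel subset of $\mathbb{R}_+$.

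First I would write the increments explicitly as $\mathcal{N}((\tau_{n-1},\tau_n]) = \eta(\,(\tau_{n-1},\tau_n] \times (\mathbb{N} \times \mathbb{R}_+)\,)$ projected onto its $\mathcal{N}$ component, and similarly for $\mathcal{W}$. Since the renewal process $\mathcal{T}$ is assumed independent of $\eta$, conditioning on $\Delta_n = t$ (equivalently, on the endpoints $\tau_{n-1}, \tau_n$) reduces the inner expectation to the transform of $(\mathcal{N}(T), \mathcal{W}(T))$ where $T$ is now a deterministic Borel set of length $t$. Then equation \eqref{markedpoissontransform} gives directly
\begin{equation*}
E\bigl[z^{\mathcal{N}((\tau_{n-1},\tau_n])} e^{-v\mathcal{W}((\tau_{n-1},\tau_n])} \,\big|\, \Delta_n = t\bigr] = e^{\lambda t [g(z l(v)) - 1]}.
\end{equation*}

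Next, I would apply the tower property and pull out the factor $e^{-\theta \Delta_n}$, which is measurable with respect to $\Delta_n$. This yields
\begin{equation*}
\gamma(z,v,\theta) = E\bigl[e^{-\Delta_n\,[\theta + \lambda - \lambda g(z l(v))]}\bigr] = L\bigl(\theta + \lambda - \lambda g(z l(v))\bigr),
\end{equation*}
by the definition of the LST $L$ of $\Delta_n$. The same argument with $L_0$ in place of $L$ and the interval $[0,\tau_0]$ in place of $(\tau_{n-1},\tau_n]$ establishes formula \eqref{initialincrementbasic} for $\gamma_0$.

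The only mild obstacle is justifying the conditioning rigorously: one must verify that conditional on $\sigma(\mathcal{T})$, the restriction of $\eta$ to $(\tau_{n-1},\tau_n]$ has the same finite-dimensional distributions as $\eta$ restricted to a deterministic interval of the corresponding length. This follows from the independence of $\eta$ and $\mathcal{T}$ together with the stationarity of the underlying Poisson support measure. Convergence of the resulting LST in the desired domain is not an issue under the stated conditions $\mathrm{Re}(\theta) \geq 0$, $|z| \leq 1$, $\mathrm{Re}(v) \geq 0$, since $\mathrm{Re}[\theta + \lambda - \lambda g(zl(v))] \geq 0$ there.
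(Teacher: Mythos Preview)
Your proposal is correct and follows essentially the same route as the paper: both arguments condition on $\Delta_n$ and exploit the independence of $\eta$ and $\mathcal{T}$ to reduce to the transform of $\eta$ over a deterministic interval, then integrate against the law of $\Delta_n$. The only cosmetic difference is that you invoke formula~\eqref{markedpoissontransform} directly for the conditional expectation, whereas the paper's proof re-derives that identity inline by further conditioning on the number of strikes $J$ in $\Delta_1$ (Poisson with mean $\lambda\Delta_1$) and on each $n_j$, obtaining $E[(g(zl(v)))^J\mid\Delta_1]=e^{\lambda\Delta_1[g(zl(v))-1]}$ before taking the outer expectation.
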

\begin{proof}
\begin{align}
\gamma\left(z,v,\theta\right)&=E\left[z^{X_1}e^{-vY_1}e^{-\theta\Delta_1}\right]=E\left[e^{-\theta\Delta_1}E\left[z^{X_1}e^{-vY_1}\Big|\Delta_1\right]\right]\notag
\\&=E\left[e^{-\theta\Delta_1}E\left[\left(z^{n_1}e^{-vw_{11}}\right)\cdots\left(z^{n_J}e^{-vw_J}\right)\Big|\Delta_1\right]\right]\notag
\\&=E\Biggl[e^{-\theta\Delta_1}E\Biggl[z^{n_1}e^{-v\left(w_{11}+...+w_{n_11}\right)}\times\cdots\times z^{nJ}e^{-v\left(w_{1J}+...+w_{n_JJ}\right)}\Bigg|\Delta_1\Biggr]\Biggr]\notag
\\&=E\left[e^{-\theta\Delta_1}E\left[E\left[\left(zl\left(v\right)\right)^{n_1}\big|n_1\right]^J\Big|\Delta_1\right]\right]=E\left[e^{-\theta\Delta_1}E\left[\left(g\left(zl\left(v\right)\right)\right)^J\Big|\Delta_1\right]\right]\notag
\\&=L\left[\theta+\lambda-\lambda g\left(zl\left(v\right)\right)\right]
\end{align}
where $J$ is the number of strikes arriving in $\Delta_1$, which is a Poisson RV with parameter $\lambda\Delta_1$.
\end{proof}

\section{\protect\centering Properties of the Inverse Operator $\mathcal{D}^k$}	

In this appendix, we mention some useful properties of the inverse operator $\mathcal{D}^k$, as defined in \eqref{doperatorinversebasic}.

\begin{enumerate}[\itshape($i$)]
\item $\mathcal{D}^k$ is a linear functional.
\item $\mathcal{D}^k_x\left(\boldsymbol{1}\left(x\right)\right)=1$, where \textbf{1}$\left(x\right)=1$ for all $x\in\mathbb{R}$.
\item If $g$ is an analytic function at $0$, then
\begin{equation*}
\mathcal{D}^k_x\left(x^jg\left(x\right)\right)=\mathcal{D}_x^{k-j}\left(g\left(x\right)\right).
\end{equation*} 
\item If $a\left(x\right)=\sum\limits_{j=0}^\infty a_jx^j$, then
\begin{equation*}
\mathcal{D}^k_x\left(a\left(xy\right)\right)=\sum\limits_{j=0}^ka_jy^j.
\end{equation*}
\item If $b\neq 1$,$\,$for $b\in\mathbb{R}$, then
\begin{equation*}
\mathcal{D}_x^k\left(\frac{1}{1-bx}\right)=\frac{1-b^{k+1}}{1-b}.
\end{equation*}
\item If
$b\neq 1$,$\,$for $a,b\in\mathbb{R}$, then
\begin{equation*}
\mathcal{D}_x^k\left(\frac{1}{1-bx}\frac{1}{1-ax}\right)=\frac{1}{1-b}\sum\limits_{j=0}^ka^j\left(1-b^{k+1-j}\right).
\end{equation*}
\end{enumerate}
\end{appendices}
\end{document}